\newtheorem{theorem}{Theorem}[section]
\newtheorem{lemma}[theorem]{Lemma}
 \newtheorem{prop}[theorem]{Proposition}
\theoremstyle{definition}
\newtheorem{definition}[theorem]{Definition}
\theoremstyle{remark}
\numberwithin{equation}{section}
\def\Z{\mathbb{Z}}
\def\R{\mathbb{R}}
\def\Q{\mathbb{Q}}
\def\C{\mathbb{C}}
\def\calm{\mathcal{M}}
\def\calg{\mathcal{G}}
\def\cale{\mathcal{E}}
\def\cala{\mathcal{A}}
\def\calh{\mathcal{H}}
\newcommand\Hom{\operatorname{Hom}}
\newcommand{\Pf}{\operatorname{Pf}}
\newcommand{\Tr}{\operatorname{Tr}}
\newcommand{\Ind}{\operatorname{Ind}} 
 \newcommand{\spa}{\operatorname{span}} 
\newcommand{\lk}{\operatorname{lk}}
\newcommand{\Sign}{\operatorname{Sign}}
\newcommand\cs{\operatorname{cs}}
\begin{document}

 
\title[Chern-Simons invariants,    instantons,  and  $\Z/2$-homology cobordism]{Chern-Simons invariants, SO(3) instantons,  and  $\Z/2$-homology cobordism}



\author{Matthew Hedden}

 \address{Matthew Hedden: Department of Mathematics, Michigan State University,  
 East Lansing, MI 48824}

\email{mhedden@math.msu.edu}
 
\thanks{This work was supported in part by the National Science Foundation under grants   0604310, 0706979,  0906258, and  1007196. }

\author{Paul Kirk} 
\address{Paul Kirk: Department of Mathematics, Indiana University, Bloomington, IN 47405}
\email{pkirk@indiana.edu}

\subjclass[2000]{Primary 57M25}


\date{\today}

\begin{abstract}  We review the $SO(3)$ instanton gauge theory of Fintushel and Stern and recast it in the context of 4-manifolds with cylindrical ends.     Applications to the $\Z/2$ homology cobordism group of $\Z/2$ homology 3-spheres are given. \end{abstract}
\maketitle


\setcounter{tocdepth}{3}
\tableofcontents

\section{Introduction}

Among the  many results of instanton gauge theory, one, due to Furuta,  stands out because no alternative proof has been found despite enormous progress in gauge theory made through the Seiberg-Witten and Ozsv\'ath-Szab\'o theories.  

\medskip

\noindent {\bf Theorem} ({\bf Furuta} \cite{Furuta}, see also \cite{FS}){\bf .}  {\em The Seifert fibered integer homology 3-spheres  $\Sigma(p,q,pqk-1), k=1,2,\cdots\infty$  are linearly independent in the homology  cobordism group, $\Theta^3_H$.}
\medskip

Furuta proved this theorem using the machinery of instantons on pseudofree orbifolds, as developed by Fintushel and Stern \cite{FSpseudo,FS}.   In light of the (perhaps imprecise) expectation that the geometric information contained in Donaldson theory should coincide with that of the more recent approaches to gauge theory, the difficulty in proving this theorem in the context of Seiberg-Witten or Ozsv\'ath-Szab\'o theory is quite mysterious, and motivates the present work.

The aim of this article is to revisit the technique used in the proof of  Furuta's result   and to recast it in the light of advances in the theory of instantons on manifolds with cylindrical ends.  It is our hope that understanding the result in this context will increase its power as a tool for studying smooth cobordism or knot concordance, and will perhaps shed light on how one could extract similar information from the modern invariants.

Before describing some of the nuances involved in the cylindrical end reformulation, we present the following  application afforded by the present approach (see Theorem  \ref{SFQHS}  for a stronger statement and Section \ref{examples} for further applications).

\medskip

\noindent{\bf Theorem 1.} {\em Suppose that $ p,q,d $ are  relatively prime positive odd integers.  For $k$ large enough, the rational homology spheres obtained by $\frac{d}{d^{k}-1}$ surgeries   on the right-handed     $(p,q)$ torus knot    are linearly independent in the $\Z/2$ homology cobordism group $\Theta^3_{\Z/2}$. }  
\medskip


\medskip

 As mentioned, the mechanism underlying  Furuta's argument  relied on the use of orbifold connections and the machinery of instantons on manifolds with cylindrical ends, introduced in Taubes's ground-breaking article \cite{taubes0}. Here, we reformulate the entire    argument in terms of gauge theory on 4-manifolds with cylindrical ends in the manner pioneered by Taubes \cite{taubes0}  and Floer \cite{Floer}.   This yields additional flexibility, in particular providing cobordism obstructions outside of the realm of pseudofree orbifolds.   In  addition, however, our treatment allows us to carefully address   several technical points.      In the present context we are forced to consider the $L^2$ moduli space of instantons when addressing the issue of compactness,  and hence must appeal to the results of \cite{MMR}.  Here, the Chern-Simons invariants  of flat connections provide a lower bound on the quanta of energy that can escape in a non-convergent  sequence of instantons.

 It is perhaps implicit (if not explicitly stated) in \cite{Furuta} and \cite{FS} that the boundary 3-manifolds should not admit  degenerate flat $SO(3)$ connections, and in any case this is automatic when the boundary manifolds are lens spaces or Seifert fibered homology spheres $\Sigma(p,q,r)$ with {\em three} singular fibers.  But for more general 3-manifolds extra care with hypotheses is needed, and we hope our exposition provides this.
In addition, new wrinkles arise in   the enumeration of reducible instantons (see Theorem \ref{CofL}).   These and other points are presented in a context that eschews orbifolds in favor of manifolds with cylindrical ends.    This material  can also be considered as a generalization of  the results of Ruberman and Mati\'c, \cite{matic, ruberman}, which correspond to the case $p_1(E,\alpha)=0$.

In a companion article \cite{HK1}, we will use this machinery to show that a certain infinite family of  untwisted, positive-clasped Whitehead doubles of torus knots is linearly independent in the smooth concordance group.
We anticipate many more applications to the study of concordance and cobordism groups.

Finally, we remark that the general argument is  reminiscent of techniques that use Casson-Gordon invariants to establish linear independence   in the {\em topological} knot concordance group  \cite{jiang}, and this intriguing similarity provides motivation for  further study.   Roughly speaking, in the context of Casson-Gordon invariants arguments focus on one prime $p$ at a time and corresponding $D_{2p}$ (dihedral) representations.  Knots whose branched covers 
have no $p$-torsion in their homology contribute nothing to the obstruction to linear independence. In the Furuta/Fintushel-Stern argument, one focuses on the smallest Chern-Simons invariant of flat $SO(3)$ connections.  Homology spheres whose smallest Chern-Simons invariant exceed  this minimum contribute nothing to the obstruction to linear independence.   

\medskip

\noindent{\bf  Acknowledgments:} It is our pleasure to thank Tom Mrowka, Charles Livingston, and Ron Fintushel for many interesting conversations.  The second author thanks the organizers of the inspiring {\em Chern-Simons Gauge Theory: 20 years after} conference.

  \section{Adapted bundles and instantons }

\subsection{Adapted bundles} We begin by reviewing Donaldson's description \cite{donald1} of gauge theory on adapted bundles over 4-manifolds with cylindrical ends, in the context of $SO(3)$ bundles. Given a flat $SO(3)$ connection $\alpha$ on a 3-manifold $Y$, denote by $H_\alpha^*(Y)$ the cohomology of $Y$ with coefficients in the corresponding flat $\R^3$ bundle.  The flat connection $\alpha$ is called {\em non-degenerate} if $H^1_\alpha(Y)=0$.   If $Y$ is a rational homology sphere, then   the trivial connection  is non-degenerate.   
 \medskip
 
 Consider   a compact oriented 4-manifold $X$  with boundary $\partial X=Y=\sqcup_{i=1}^c Y_i$, a disjoint union of closed 3-manifolds.  Endow $X$ with a Riemannian metric which is isometric to $[-1,0]\times Y $ in a collar neighborhood of the boundary.   Throughout the rest of this section, we make the following assumption.
  
  \medskip
  
 \begin{center}
{\it The 4-manifold $X$ is path connected, satisfies $H^1(X;\Z/2)=0$, and has non-empty boundary $Y=\sqcup_{i=1}^c Y_i$  which is a disjoint union of rational homology 3-spheres.}
\end{center}
 
 \medskip

  Form the non-compact manifold $X_\infty=X\cup_Y \big([0,\infty) \times Y\big)$ by adding an infinitely long collar to each boundary component.  Choose a Riemannian metric on $X_\infty$ whose restriction to each cylinder is the product metric.  The submanifold $ [0,\infty)\times Y_i$ will be called an {\em end} of $X_\infty$.

\begin{definition} An {\em adapted bundle  $(E,\alpha)$ over $X$} is an $SO(3)$ vector bundle $E\to X_\infty$, together with a fixed flat connection $\alpha_i$ on each end $[0,\infty)\times Y_i$.    We use $\alpha$ as  shorthand for the set $\{\alpha_i\}$.  Two  adapted bundles $(E,\alpha)$ and $(E',\alpha')$ are called {\em equivalent} if there is a bundle isomorphism from $E$ to $E'$  identifying  the flat connections $\alpha$ and $\alpha'$ over $[r,\infty)\times Y$ for some $r\ge 0$. 
\end{definition}
 
Any adapted bundle $(E,\alpha)$ is equivalent to one in which  the flat connection $\alpha_i$ on each end  $[0,\infty)\times Y_i$ is in {\em cylindrical form}, that is $\alpha_i=\pi^*(\tilde{\alpha}_i)$ where $\tilde{\alpha}_i$ is a flat connection on $Y_i$ and $\pi:[0,\infty)\times Y_i\to Y_i$ is the projection to the second factor.  Indeed, since $\alpha$ is flat,   there always exists a gauge transformation $g: E\to E$ which equals the identity on the interior of $X$  so that $g^*(\alpha)$ has this form, obtained by parallel transport along rays $[0,\infty)\times  \{y\}$. We will tacitly assume that each $\alpha_i$ is in cylindrical form whenever convenient, and use the same notation $\alpha_i$ for the restriction of this flat connection to $Y_i$. 

  Since $X$ has non-empty boundary, $SO(3)$ vector bundles over $X_\infty$ are isomorphic if and only if their second Stiefel-Whitney classes are equal.    In particular,  if 
 $(E,\alpha)$ and $(E',\alpha')$ are equivalent  adapted bundles over $X_\infty$, then $w_2(E)=w_2(E')$.

 \medskip

 \subsection{The Pontryagin charge, instantons,  and Chern-Simons invariants}\label{chsi}
Given any  $SO(3)$ connection $A$  on a bundle $E$ over a (not necessarily compact) 4--manifold, $Z$, let $F(A)$ denote its curvature 2-form.  Define the {\em Pontryagin charge of} $A$  to be the real number
\begin{equation}\label{csp1}
p_1(A)=-\frac{1}{8\pi^2}\int_{Z}  \Tr(F(A)\wedge F(A)) ,
\end{equation}
provided this integral converges.  When $Z$ is closed, $p_1(A)=\langle p_1(E),[Z]\rangle\in \Z$.
\begin{definition} An $SO(3)$  connection $A$ on a  Riemannian manifold $Z$ is called an {\em instanton} if its curvature form satisfies the equation $$F(A)=-*F(A).$$   \end{definition}
If $A$ is an instanton, then 
$$p_1(A)=\frac{1}{8\pi^2}\int_{Z}  \Tr(F(A)\wedge *F(A))=
\frac{1}{8\pi^2}\| F(A)\|^2_{L^2}$$
whence $p_1(A)$ is defined and non-negative for any instanton $A$ whose curvature has finite $L^2$ norm.

Given an adapted bundle $(E,\alpha)$ over $X_\infty$, choose an $SO(3)$ connection $A_0$ on $E$ which extends   the given flat connection $\alpha$ on the union of the ends. Then its curvature $F(A_0)$ is supported on the compact  submanifold $X$,  and so $p_1(A_0)\in \R$ is defined.   Given any other extension $A_1$ of $\alpha$, then Chern-Weil theory shows that $ \Tr(F(A_1)\wedge F(A_1))=  \Tr(F(A_0)\wedge F(A_0)) + d\gamma$  where
$$\gamma=- \int_0^1 \Tr\big((A_0-A_1)\wedge 2F(tA_0 +(1-t)A_1)\big)dt.   $$  
See, for example, \cite[Lemma 3.3]{Wells}.  Since $A_0-A_1$ vanishes on the collar, Stokes' theorem implies that  $p_1(A_0)=p_1(A_1)$, so we denote this quantity by $p_1(E,\alpha)$.   

\medskip

\begin{definition} \label{csdefn} Given a pair $\alpha_0, \alpha_1$ of  $SO(3)$ connections on the same bundle $E$ over 
closed 3-manifold $Y$, define the  {\em relative Chern-Simons invariant},
$\cs(Y,\alpha_0,\alpha_1)\in \R$,  by $$\cs(Y,\alpha_0,\alpha_1)=p_1(\R\times E,\alpha),$$   where $\alpha$ equals $\alpha_0$ on $(-\infty, 0]\times Y$ and $\alpha_1$ on $[1,\infty )\times Y$.

  \end{definition}

 \begin{definition} Given a   connection $\alpha$ on an $SO(3)$ bundle  $E $ over a closed 3-manifold $Y$, the bundle    extend  to a bundle   with connection $A$ over some 4-manifold $Z$ with boundary $Y$, since $H_3(BSO(3))=0$.    Define the {\em Chern-Simons invariant  of $\alpha$  modulo $\Z$},  $\cs(Y,\alpha)\in \R/\Z$ by 
 $$\cs(Y,\alpha)=p_1(A) \text{ mod } \Z.$$ 
Since the integral of the first Pontryagin form of a connection   over a {\em closed} 4-manifold  is an integer, it follows that    $\cs(Y,\alpha)$ is well defined in $ \R/\Z$.\end{definition}

These two invariants are related by   
  $$\cs(Y,\alpha_0,\alpha_1)= \cs(Y,\alpha_1)- \cs(Y,\alpha_0)\text{ mod }\Z $$
  whenever the left side is defined.
  
    If $\alpha$ extends to a {\em flat} connection $A$ over some 4-manifold then $p_1(A)=0$, and  hence   the Chern-Simons invariant  modulo $\Z$ is  a flat cobordism invariant.  
 In particular, if $E\to Y$ is the   trivial bundle and $\alpha$ a flat connection on $E$, then   $\cs(Y,\alpha)$ has a canonical lift to $\R $, namely $\cs(Y,\theta,\alpha)$, where $\theta$ denotes the trivial connection with respect to the given trivialization of $E$.   
 
  Bundle automorphisms $h:E\to E$  covering the identity which preserve the $SO(3)$ structure group are called {\em gauge transformations}. The group $\calg(E)$ of gauge transformations acts on the space of  $SO(3)$ connections $\cala(E)$ by pullback.

   If $g:E\to E$ is a  gauge transformation,  then $\cs(Y, \alpha, g^*(\alpha))$ equals $p_1(\tilde E)$, where $\tilde E$ is the bundle over $  Y \times S^1$ obtained by taking the mapping torus of $g$. Since $Y \times S^1$ is closed, Chern-Weil theory implies that $p_1(\tilde E)$ is an integer, and so we conclude that $\cs(Y,\alpha, g^*(\alpha))\in \Z$.  Notice that if $g$ is homotopic to the identity gauge transformation, then the bundle $\tilde E$  is isomophic to the pullback of $E$ over $Y$ via the projection $\pi:Y\times S^1\to Y$, and hence  $\cs(Y,\alpha, g^*(\alpha))=p_1(\pi^*(E))=0$. A similar argument shows that  the integer $\cs(Y,\alpha, g^*(\alpha))$ depends only on the path component of $g$ in  $\calg (E)$.  It follows easily that  the reduction modulo $\Z$ of $\cs(Y,\alpha_0,\alpha_1)$ depends only on the orbits of $\alpha_0$ and $\alpha_1$  under the action of $\calg(E)$. Similarly, $\cs(Y,\alpha)\in \R/\Z$ depends only on the orbit of $\alpha$.

Dold and Whitney proved that $SO(3)$ bundles over 4-complexes  are determined up to isomorphism by    
their second Stiefel-Whitney class $w_2$ and their first Pontryagin class $p_1$ \cite{DW}.  Moreover, these are related by  $p_1\equiv \mathcal{P} (w_2) $ mod $4$, where $ \mathcal{P}:H^2(-;\Z/2)\to H^4(-;\Z/4)$ denotes the Pontryagin square, a cohomology operation which lifts the cup square $   H^2(-;\Z/2)\to H^4(-;\Z/2)$, $y\mapsto y^2$.

As a consequence, if $Y$ is a $\Z/2$ homology 3-sphere, then  $H^2(Y\times S^1;\Z/2)=0$, and so $\cs(Y, \alpha, g^*(\alpha))\in 4\Z$ for any gauge transformation $g$.

 \subsection{The moduli space}
    
Let $(E,\alpha)$ be an adapted bundle over $X_\infty$.   The elliptic operators $*d_{\alpha_i}-d_{\alpha_i}*$ acting on even degree $E|_{Y_i}$--valued forms 
 on $Y_i$ are self-adjoint with discrete spectrum.  Choose (and fix for the remainder of this section) a  $\delta>0$ smaller than the absolute value of any non-zero eigenvalue of  any of the $*d_{\alpha_i}-d_{\alpha_i}*$.   Let $f:X_\infty\to [0,\infty)$ be a smooth function which equals 0 on $X$ and ${t\delta}$ on $\{t\} \times Y_i$ for $t>1$. Then   define  the weighted Sobolev spaces $L^{p,\delta}_n$ of sections of bundles over $X_\infty$ as the completion of the space of compactly supported sections with respect to the norm $\| \phi\|_{L^{p,\delta}_n}:=\| e^{f}\phi\|_{L^p_n}$.

Fix an extension $A_0$ of $\alpha$ to $E$.      Identify $E$ with $so(E)$  via the identification $\R^3=so(3)$ of $SO(3)$ representations, and let $SO(E)$ denote the $SO(3)$ bundle automorphisms. Connections on $E$  take the form $A_0+a$ for a 1-form $a$ with values in $E$. Following   \cite[Chapter 4]{donald1}, define 
 $$ \cala^\delta(E,\alpha)=\{A_0+a \  |  \  a\in L^{3,\delta}_1(\Omega^1_{X_\infty}(E))\} 
$$ and $$
 \calg^\delta(E,\alpha)=
 \{ g \in SO(E)\ | \  \nabla_{A_0}g\in L^{3,\delta}_1(T^*X_\infty\otimes \text{End}(E)) \}$$
(where $\nabla_{A_0}$ denotes the covariant derivative associated to $A_0$). Then pulling back connections extends to an action of  the completions $\calg^\delta(E,\alpha)$  on $\cala^\delta(E,\alpha)$.  
 The {\em moduli space $ \calm(E,\alpha) $ of instantons on $(E,\alpha)$} is defined to  be
 $$\calm(E,\alpha)=\{A\in \cala^\delta(E,\alpha)\ | \ F(A)=-*F(A)\}/\calg^\delta(E,\alpha).$$

The idea underlying these definitions is that $\cala^\delta(E,\alpha)$ consists of connections that limit    exponentially along the ends of $X_\infty$ to $\alpha$, and $\calg^\delta(E,\alpha)$ consists of gauge transformations that limit exponentially along the ends of $X_\infty$ to  a  gauge transformation   that stabilizes $\alpha$.

 \begin{definition} A {\em reducible connection} on $(E,\alpha)$ is a connection $A$ in $\cala^\delta(E,\alpha)$ whose stabilizer $\Gamma_A\subset\calg^\delta$ is non-trivial. Let $\calm(E,\alpha)_{red}\subset  \calm(E,\alpha)$ denote the subset of gauge equivalence classes of reducible instantons. Its complement
 $\calm(E,\alpha)\setminus\calm(E,\alpha)_{red}$  is denoted
 $\calm(E,\alpha)^*$.   \end{definition}

Restricting a gauge transformation to the fiber of $E$ over a base point in $X$ embeds the stabilizer $\Gamma_A$ in $SO(3)$ and identifies it with the centralizer of the holonomy group of $A$ at the base point. In particular, the subgroup $\calg_0^\delta(E,\alpha)\subset \calg^\delta(E,\alpha)$  of gauge transformations which restrict to the identity in the fiber of $E$ over the base  point of $X$  acts freely on $\cala^\delta(E,\alpha)$, and the quotient $SO(3)\cong 
 \calg^\delta(E,\alpha)/ \calg_0^\delta(E,\alpha)$ acts on
  $\cala^\delta(E,\alpha)/\calg_0^\delta(E,\alpha)$ with stabilizers $\Gamma_A$.  The restriction of this action to the irreducible instantons defines a principal $SO(3)$ bundle $\cale\to \calm(E,\alpha)^*$ called the {\em base point fibration} \cite[Section 9]{FSpseudo}.

\subsection{Calculation of the index}

 Define $\Ind^+(E,\alpha)$ to be the Fredholm index of the operator 
 $$S^\delta_{A_0}=d^*_{A_0}\oplus d^+_{A_0}:L^{3,\delta}_1(\Omega^1_{X_\infty}(E))\to 
L^{3,\delta} (\Omega^0_{X_\infty}(E)\oplus \Omega^+_{X_\infty}(E))$$
where $\Omega^+$ denotes the self-dual 2-forms: $\omega=*\omega$.
The Atiyah-Patodi-Singer theorem can be used to calculate  this index.  
Let $\eta(Y,\alpha)$ denote the Atiyah-Patodi-Singer  spectral invariant  $\eta(B_{\alpha},0)$ of  the   operator on $Y$:
\begin{equation*}
\begin{split} &B_{\alpha}:\Omega^0_{Y}(E|_Y)\oplus \Omega^1_{Y}(E|_Y) \to
\Omega^0_{Y}(E|_Y)\oplus \Omega^1_{Y}(E|_Y), \\  &B_{\alpha}(\phi_0,\phi_1) =
(- d_{\alpha}^*\phi_1,  d_{\alpha}\phi_0+*d_{\alpha}\phi_1), \end{split}
\end{equation*}
and let $$\rho(Y,\alpha)=\eta(Y,\alpha)-3\eta(Y),$$ denote the Atiyah-Patodi-Singer $\rho$ invariant of $(Y,\alpha)$. (In general,  for a connection  $\gamma $ on a  bundle $V\to Y$, $\rho(Y,\gamma)=\eta(Y,\gamma)-\dim(V)\cdot \eta(Y,\theta)$, where $\theta$ is the trivial connection on the trivial $1$-dimensional line bundle.) The real number $\rho(Y,\alpha)$ depends only on the gauge equivalence class of $\alpha$.

In general, the kernel of $B_{\alpha_i}$ equals the direct sum $\calh^0_{\alpha_i}(Y_i)\oplus \calh^1_{\alpha_i}(Y_i)$ of the $d_{\alpha_i}$-harmonic $0$- and $1$-forms on $Y_i$ with coefficients in the flat bundle supporting $\alpha_i$ (see  \cite[Section 2.5.4]{donald1}). The assumption that  each flat connection $\alpha_i$ is non-degenerate and the Hodge theorem imply that  $\ker B_\alpha\cong\oplus_{i=1}^c  H^0_{\alpha_i}(Y_i)$.
Setting  $h_\alpha= \dim H^0_{\alpha}(Y)$  and $h_{\alpha_i}=\dim H^0_{\alpha}(Y_i) $ we conclude that  $h_\alpha =\sum h_{\alpha_i}$.   Note that $\rho(Y_i,\alpha_i)=0$ and $h_{\alpha_i}=3$  when $\alpha_i$ has trivial holonomy,   as such a connection is gauge equivalent to the trivial connection.

The discussion following the proof of Proposition 3.15 of  \cite{APS1} shows that the index $\Ind^+(E,\alpha)$ is equal to  the Atiyah-Patodi-Singer index  $\Ind S^{APS}_{A_0} $ of the corresponding operator on (the compact  manifold) $X$ with Atiyah-Patodi-Singer boundary conditions.  This uses the non-degeneracy of the $\alpha_i$ to conclude that $L^2$ solutions of $S_{A_0}\phi=0$ decay along the ends faster than $e^{-t\delta}$.  Thus $\Ind^+(E,\alpha)$ can be computed using the Atiyah-Patodi-Singer theorem.  For the convenience of the reader we outline the  calculation; similar calculations can be found in the literature; for example  it is a consequence of (the more general) \cite[Proposition 8.4.1]{MMR}.    Let $b^+(X)$ denote the dimension of a maximal positive definite subspace of $H^2(X;\R)$ with respect to the intersection form (see Section \ref{reduciblesect} below for the definition of the intersection form in this context).

\begin{prop}\label{APSindex} Let $(E,\alpha)$ be an adapted bundle over a path connected 4-manifold $X$ with $c\ge 1$ boundary components.  Assume further that the flat connection $\alpha_i$ on each   $Y_i$ is non-degenerate, and that $H^1(X;\Q)=0=H^1(Y;\Q)$.  

Then 
 \begin{equation*}\Ind^+(E,\alpha)= 2p_1(E,\alpha) - 3(1+b^+(X))   +\tfrac{1}{2}\sum_{\alpha_i{\text{\tiny \rm nontrivial}}}(3-h_{\alpha_i}-\rho(Y_i,\alpha_i)).
\end{equation*}
 
\end{prop}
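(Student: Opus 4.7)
The plan is to apply the Atiyah-Patodi-Singer index theorem on the compact manifold $X$ and unwind the resulting formula. First, using the identification cited just above the statement (namely \cite[Proposition 3.15]{APS1}), non-degeneracy of each $\alpha_i$ implies that $0$ is an isolated eigenvalue of $B_{\alpha_i}$, so that any $L^2$ solution of $S_{A_0}\phi=0$ decays along the ends faster than $e^{-t\delta}$ and thus lies in $L^{3,\delta}_1$. Consequently $\Ind^+(E,\alpha) = \Ind S^{APS}_{A_0}$, the APS index of $S_{A_0}$ on the compact $X$ with APS boundary conditions.

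Next I would apply the APS theorem,
\[
\Ind S^{APS}_{A_0} = \int_X I(A_0) - \tfrac{1}{2}\bigl(h_\alpha + \eta(B_\alpha)\bigr),
\]
where $I(A_0)$ is the local Atiyah-Singer index density. The identification $\ker B_\alpha \cong \bigoplus_i H^0_{\alpha_i}(Y_i)$ noted before the proposition gives $\dim\ker B_\alpha = h_\alpha$, and the definition of $\rho$ gives $\eta(B_\alpha) = \sum_i\rho(Y_i,\alpha_i) + 3\eta(Y)$, where $\eta(Y)=\sum_i\eta(Y_i)$. The local density for $d^*_{A_0}\oplus d^+_{A_0}$ coupled to the rank-three bundle $E$ splits as a twisted Pontryagin form (integrating to $2p_1(E,\alpha)$ by definition) plus three copies of the untwisted Atiyah-Singer density of $d^*\oplus d^+$. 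The latter is a universal combination of Euler and first Pontryagin forms of $TX$, and integrating via Gauss-Bonnet and the APS formula for the signature operator, together with the topological identity $\chi(X)+\sigma(X) = 2(1+b^+(X))-c$ (a consequence of $H^1(X;\Q)=H^1(Y;\Q)=0$, the rational homology sphere hypothesis, the long exact sequence of $(X,Y)$ and Poincar\'e-Lefschetz duality), yields
\[
\int_X I(A_0) = 2p_1(E,\alpha) - 3(1+b^+(X)) + \tfrac{3c}{2} + \tfrac{3}{2}\eta(Y).
\]

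Finally, substituting into the APS formula, the bulk $\tfrac{3}{2}\eta(Y)$ cancels with the $3\eta(Y)/2$ inside $-\tfrac{1}{2}\eta(B_\alpha)$, and the constants regroup as $\tfrac{3c}{2} - \tfrac{1}{2}h_\alpha = \tfrac{1}{2}\sum_i(3 - h_{\alpha_i})$, producing
\[
2p_1(E,\alpha) - 3(1+b^+(X)) + \tfrac{1}{2}\sum_i\bigl(3 - h_{\alpha_i} - \rho(Y_i,\alpha_i)\bigr).
\]
Each trivial end has $h_{\alpha_i}=3$ and $\rho(Y_i,\alpha_i)=0$, contributing zero, so the sum may be restricted to the nontrivial $\alpha_i$, giving the stated formula. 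The main obstacle is the clean $\eta(Y)$-cancellation: one must carefully verify the sign convention relating the eta invariant of the boundary operator for the untwisted $d^*\oplus d^+$ (what the paper calls $\eta(Y)$) to the signature eta appearing in the bulk APS calculation, and confirm that these contributions cancel exactly so that only the relative $\rho$-invariants survive on the nontrivial ends.
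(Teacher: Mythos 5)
Your proposal follows the same overall architecture as the paper's proof: identify $\Ind^+(E,\alpha)$ with the APS index on the compact $X$ (using non-degeneracy of the $\alpha_i$), apply the Atiyah--Patodi--Singer theorem, and split the index density into the twisted Pontryagin contribution $2p_1(E,\alpha)$ plus three copies of the untwisted $d^*\oplus d^+$ density. Where you diverge is in evaluating the untwisted contribution: you integrate the local density via Gauss--Bonnet and the APS signature theorem and then invoke the identity $\chi(X)+\sigma(X)=2(1+b^+(X))-c$ (which you justify correctly from the hypotheses), relying on an exact cancellation of the boundary $\eta(Y)$ terms. The paper instead applies APS a second time to the untwisted operator $S^{APS}=d^*\oplus d^+$ on the trivial line bundle, subtracts three times its index, and then computes $\Ind S^{APS}=-1-b^+(X)$ \emph{directly} by Hodge theory: the kernel vanishes because $H^1(X;\R)=0$ identifies with the $L^2$ harmonic $1$-forms, and the cokernel is identified with the constants plus the self-dual $L^2$ harmonic $2$-forms via an integration-by-parts argument on the cylinder showing the extended solutions split. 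The net effect is the same -- your claimed value of the untwisted bulk integral is exactly what the paper's computation forces -- but the sign-convention issue you flag at the end (matching the $\eta$ of the tangential operator of $d^*\oplus d^+$ with the signature $\eta$ in the bulk formula) is genuinely the delicate point of your route, and it is precisely what the paper's kernel--cokernel computation of $\Ind S^{APS}$ sidesteps: by evaluating that index topologically rather than through the local density, all $\eta$- and $h$-dependence cancels in the subtraction without ever pinning down the conventions. So your argument is sound in outline, but to complete it you would need to carry out the verification you defer (it is contained in Section 4 of APS I), whereas the paper's alternative evaluation of the untwisted index makes that verification unnecessary. The final step -- observing that trivial ends contribute $3-h_{\alpha_i}-\rho(Y_i,\alpha_i)=0$ so the sum restricts to nontrivial $\alpha_i$ -- is identical in both treatments.
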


\begin{proof}  We use the Atiyah-Patodi-Singer index theorem \cite{APS1} twice, once on the bundle $E$ with operator 
$$S_{A_0}^{APS}=d_{A_0}^*\oplus d^+_{A_0}:L^2_1(\Omega^1_{X}(E);P_\alpha^+)\to L^2(\Omega^0_X(E)\oplus \Omega^+_X(E))$$
with index 
$$\Ind S^{APS}_{A_0}=\int_{X}f_{X,E}(R_X,F(A_0))-\tfrac{1}{2}(h_\alpha + \eta(Y,\alpha)),$$
 and again on the trivial 1-dimensional $\R$ bundle with trivial connection  
 $$S^{APS}=d^*\oplus d^+:L^2_1(\Omega^1_{X};P^+)\to L^2(\Omega^0_X\oplus \Omega^+_X)$$
with index 
$$\Ind S^{APS}=\int_{X} f_{X,\epsilon}(R_{X},\theta)-\tfrac{1}{2}(h + \eta(Y)).$$

In these formulas $P^+_\alpha$ denotes the spectral projection onto the non-negative eigenspace of $B_\alpha$, and $\eta(Y_i,\alpha_i)$ and $h_\alpha$, are as above.  The corresponding unadorned symbols refer to their analogues on the trivial, untwisted $\R$ bundle $\epsilon$.  Since   $Y_i$ is a rational homology sphere,    $h =c$.

The  (inhomogeneous) differential form $f_{X,E}( R_{X},F(A))$ is a function of the Riemannian curvature $R_X$ and the curvature $F(A)$ of a connection $A$.  
It equals 
$$\big(\dim E -\tfrac{1}{8\pi^2} \Tr(F(A)\wedge F(A))\big)\big(2+  \tfrac{1}{2}(\chi(R_{X})+\sigma(R_{X}))   \big),$$  where  $\chi(R_{X})$ is the Euler form and $\sigma(R_{X})$ is the Hirzebruch signature form.  Hence
$$\int_{X}f_{X,E}( R_{X},F(A))=  
2p_1(A_0)+3\int_{X}f_{X,\epsilon}( R_{X},\theta).
 $$
By convention, the integral of an inhomogeneous form over an $n$-manifold is defined to be the integral of its $n$-dimensional component.  

Since $\Ind^+(E,\alpha)$ is equal to    $\Ind S^{APS}_{A_0} $,   
 subtracting $3\Ind S^{APS}$ from $\Ind S^{APS}_{A_0}$ and simplifying yields
 \begin{equation}
\label{2.15.1}
\Ind^+(E,\alpha)= 2p_1(E,\alpha) + 3\Ind S^{APS} -\tfrac{1}{2}\rho(Y,\alpha) +\tfrac{1}{2}(3c-h_\alpha).
\end{equation}

To obtain the desired formula, we must show $\Ind S^{APS}=-1-b^+(X)$.   The kernel of $S^{APS}$ consists of $L^2$ harmonic  $1$-forms on $X_\infty$. Proposition 4.9 of \cite{APS1} identifies this space with the image $H^1(X,Y;\R)\to H^1(X;\R)$.  The hypothesis that $H^1(X;\Q)=0$  (which holds, in any event, by our standing assumption that $H^1(X;\Z/2)=0$) implies that $H^1(X;\R)=0$, and so $\ker S^{APS}=0$.
 
We now identify the cokernel of $S^{APS}$.  The cokernel is isomorphic to the kernel of the adjoint which, in turn, is isomorphic to the extended $L^2$ solutions to $S^*\phi=0$, i.e., the space of pairs $\phi=(\phi_0,\phi_+)\in \Omega_{X_\infty}^0\oplus \Omega_{X_\infty}^+$ satisfying $d\phi_0+d^*\phi_+=0$ and for which $(\phi_0,\phi_+)|_{ \{t\}\times Y}-\gamma$  decays exponentially on the collar  for some harmonic form $\gamma=(\gamma_0, \gamma_1) \in \ker B$ on $Y$ \cite{APS1}.  Since $Y$ is a rational homology sphere, $\ker B\cong H^0(Y)$.  It follows that $\gamma=(\gamma_0,0)$ where $\gamma_0$  is a harmonic 0-form, that is, a locally constant function on $Y$. Thus $\phi_0|_{ \{t\}\times Y}-\gamma_0$  and $\phi_+|_{ \{t\}\times Y}$  are exponentially decaying forms.     

Let $X(t)= X\cup ([0,t]\times Y)$. Then  
$$\langle d\phi_0, d^*\phi_+\rangle_{L^2(X(t))}=\pm \langle d\phi_0|_{ \{t\}\times Y},\phi_+|_{ \{t\}\times Y}\rangle_{L^2({ \{t\}\times Y})} \xrightarrow{t\to\infty}0$$
 This implies that  $d\phi_0$ and $d^*\phi_+$ are $L^2$-orthogonal. Therefore,  
 $d\phi_0+d^*\phi_+=0$ implies that   $\phi_0$ is a harmonic 0--form (that is, a constant function) and $\phi_+$ is an exponentially decaying harmonic self-dual 2-form.   Since $X$ is connected we conclude coker$S^{APS}=1+b^+(X)$, and hence 
  $\Ind S^{APS}=-1-b^+(X).$  
  
  If $\alpha_i$ has trivial holonomy, then $\rho(Y,\alpha_i)=0$ and $h_{\alpha_i}=\dim H^0_{\alpha_i}(Y)=3$.  Therefore, 
  \begin{equation}
\label{2.15.2}-\rho(Y,\alpha)+3c-h_\alpha=\sum_{i}3-h_{\alpha_i} -\rho(Y_i,\alpha_i)=
  \sum_{\alpha_i{\text{\tiny \rm nontrivial}}}3-h_{\alpha_i}-\rho(Y_i,\alpha_i)
\end{equation}
Combining Equations (\ref{2.15.1}) and (\ref{2.15.2})  yields the result.
\end{proof}


Suppose that  one of the flat connections, say $\alpha_1$, on the boundary component $Y_1$ is trivial with respect to some trivialization of $E$ over $Y_1$. Suppose further that  $Z$ is a compact, negative definite 4--manifold with boundary $-Y_1\sqcup_jW_j$, with the trivialized bundle $Z\times \R^3\to Z$ over it. Then glue $X$ to $Z$ along $Y_1$ to produce the manifold $X'=X\cup_{Y_1} Z$.  Let $E'$  be the bundle over $X'$  obtained by gluing $E$ to the trivial bundle using the given trivializations over $Y_1$. Extending the connection $A_0$ over $X'$ by the trivial connection over $Z$ gives a new adapted bundle $(E',\alpha')$ over $X_\infty'$.   Since $p_1(E,\alpha)=p_1(E',\alpha')$, Proposition  \ref{APSindex}  then shows that $\Ind^+(E,\alpha)=\Ind^+(E' , \alpha')$.

\medskip
If $\Ind^+(E,\alpha)\ge 0$, $p_1(E,\alpha)>0$, and  each $\alpha_i$ is non-degenerate, then by varying the Riemannian metric inside $X$ if necessary, $\calm(E,\alpha)^*$ is a smooth orientable manifold of dimension $\Ind^+(E,\alpha)$.  In the present context this is a consequence of  Lemma 8.8.4, Theorem  9.0.1 and  Remark 9.2.7  of \cite{MMR}.  
  When consulting \cite{MMR}, it is useful to note that because we assume the flat connections $\alpha_i$ are non-degenerate, the appropriate path components of the $L^2$ moduli spaces and thickened moduli spaces of \cite{MMR}   coincide with $\calm(E,\alpha)$.

When $p_1(E,\alpha)=0$, then $\calm(E,\alpha)$ is the moduli space of  flat connections and hence is unchanged by varying the Riemannian metric on $X_\infty$. In this situation one can instead perturb the 
equation $F(A)+*F(A)=0$ to make $\calm(E,\alpha)^*$  a smooth orientable manifold of dimension $\Ind^+(E,\alpha)$; see \cite{Furuta2,donald1}.

\subsection{Compactness}
A critical question about $\calm(E,\alpha)$ is whether it is compact.  To understand this, we define an invariant $\hat\tau(Y,\alpha)\in(0,4]$, essentially the minimal relative $SO(3)$ Chern-Simons invariant over the  path components of $Y$.  This invariant provides a sufficient condition to guarantee compactness of $\calm(E,\alpha)$ (Proposition \ref{compactness} below).  

We first introduce some notation.  For any path connected space $Z$ and compact Lie group $G$, denote by $\chi(Z,G)$ the space
$$\chi(Z,G)=\Hom(\pi_1(Z), G)/{\text{\small conjugation}}.$$  This   space is a compact real algebraic variety and is analytically isomorphic to the space of gauge equivalence classes of flat $G$ connections 
  \cite{FKK}.  To a flat $G$ connection $\alpha$ we associate its holonomy hol$_\alpha\in \chi(Z,G)$.
 Notice that $\chi(Z,G)$ is partitioned into disjoint compact subspaces corresponding to the isomorphism classes of $G$-bundles over $Z$ which support the various flat connections.
  When $Z$ is not path connected then take $\chi(Z,G)$ to be the product of the $\chi(Z_i,G)$  over the path components $Z_i$ of $Z$.   

We can now define the invariant $\hat\tau(Y,\alpha)\in(0,4]$, which is an adaptation of invariants found in \cite{Furuta, FS} and \cite[Definition 6.3.5]{MMR} to our context.

 For each boundary component $Y_i$ of $X$, the bundle $E|_{Y_i}$ carries the flat connection $\alpha_i$ and  is determined up to isomorphism by the gauge equivalence class of $\alpha_i$.   Hence $\alpha_i$ determines its second Stiefel-Whitney class. Thus  the holonomy gives a decomposition into a disjoint union
 \begin{equation}
 \label{characters}
 \chi(Y_i,SO(3))=\underset{{w\in H^2(Y_i;\Z/2)}}  \sqcup\chi(Y_i,SO(3))_w 
 \end{equation}

  If $\gamma$ is any flat connection on $E|_{Y_i}$ and $g$ any gauge transformation of $E|_{Y_i}$, then $\cs(Y_i,  \alpha_i,  \gamma)-\cs(Y_i,  \alpha_i,  g^*(\gamma))$ is an integer, as explained in Section \ref{chsi}.  This implies that $\cs(Y_i,\alpha_i,-)$ descends to a well-defined $\R/\Z$ valued function on the compact subspace  $\chi(Y_i,SO(3))_{w_2(E|_{Y_i})}\subset \chi(Y_i,SO(3))$. This function is locally constant, see e.g.  \cite{KK1}, and hence takes finitely many values in $\R/\Z$. 
This in turn implies that the set $$
S(\alpha_i)=\{\cs(Y_i,  \alpha_i,  \gamma)\text{ \rm mod } 4\ | \ \gamma\text{ a flat connection on } E|_{Y_i}\}\subset \R/4\Z$$
is finite, since it is contained in the preimage of a finite set under $\R/4\Z\to\R/\Z$.

At the moment, taking Chern-Simons invariants mod 4 might seem unmotivated, but it occurs for two reasons. First, in the compactness result below, energy can only bubble off at interior points in multiples of 4. Second, if $Y_i$ is a $\Z/2$ homology sphere, then $\cs(Y_i,  \alpha_i,  \gamma)-\cs(Y_i,  \alpha_i,  g^*(\gamma))=\cs(Y_i,  g^*(\gamma),  \gamma)$ is in fact always four times an integer, as explained in Section \ref{chsi}. 

  \begin{definition}\label{defoftau}
    Let $b:\R/4\Z\to (0,4]$ be the obvious bijection.  Define $ \tau(Y_i,\alpha_i)\in(0,4]$ to be the minimal value which $b$ attains   on the set $S(\alpha_i)$.  Informally, $\tau(Y_i,\alpha_i)$ is the minimal   relative Chern-Simons invariant  $\cs(Y_i,\alpha_i,\gamma)$ for $\gamma\in 
  \chi(Y_i,SO(3))_{w_2(E|_{Y_i})}$, taken modulo 4.

Then define
\begin{equation}\hat\tau(Y,\alpha)=\min_{Y_i\subset \partial X}\{\tau(Y_i, \alpha_i)\}\in(0,4]
\end{equation}
\end{definition}

Note that $\hat\tau(Y,\alpha)$ depends only on   $Y=\sqcup_iY_i$   and the gauge equivalence classes of the flat connections $\alpha_i$.

\medskip

The following lemma will prove useful for estimating $\hat{\tau}(Y,\alpha)$.  We omit the  proof, which is a simple consequence of the remarks in the paragraph following Definition \ref{csdefn}.

 \begin{lemma}\label{mod1mod4} Let $E\to Y$ be an  $SO(3)$ vector bundle over a closed 3-manifold and $\alpha, \gamma$ two flat connections on $E$. 
Choose any pair $(W_\alpha, E_\alpha)$ where $W_\alpha$ is a 4-manifold  with boundary $Y$,  $E_\alpha\to W_\alpha$ is a vector bundle extending $E$.  Similarly choose $(W_\gamma, E_\gamma)$.  

Then $\cs(Y,\alpha, \gamma)$ mod 4, taken in $(0,4]$, is greater than or equal to the fractional part of 
$p_1(E_\gamma,\gamma)-p_1(E_\alpha,\alpha)$, taken in $(0,1]$.
In particular, if $ p_1(E_\gamma,\gamma) $ is   rational   with denominator dividing $k\in \Z_{>0}$ for all flat connections $\gamma$ on $E$, then $\tau(Y,\alpha)\ge \frac{1}{k}$ for all $\alpha$. \qed
\end{lemma}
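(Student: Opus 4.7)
The plan is to reduce the lemma to an arithmetic observation about the two bijections $\R/\Z \to (0,1]$ and $\R/4\Z \to (0,4]$. The key input, recalled in the paragraph just after Definition \ref{csdefn}, is the congruence $\cs(Y,\alpha,\gamma) \equiv \cs(Y,\gamma) - \cs(Y,\alpha) \bmod \Z$, together with the fact that $\cs(Y,\alpha)$ and $\cs(Y,\gamma)$ are computed modulo $\Z$ by $p_1(E_\alpha,\alpha)$ and $p_1(E_\gamma,\gamma)$, for any choice of compact 4-manifold fillings $(W_\alpha,E_\alpha)$ and $(W_\gamma,E_\gamma)$.

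First I would combine these identities to obtain $\cs(Y,\alpha,\gamma) \equiv p_1(E_\gamma,\gamma) - p_1(E_\alpha,\alpha) \bmod \Z$. Let $f \in (0,1]$ denote the $(0,1]$-representative of the right-hand side modulo $\Z$. Then the $(0,4]$-representative $b(\cs(Y,\alpha,\gamma) \bmod 4)$ must be one of the four lifts $f,\ 1+f,\ 2+f,\ 3+f$ (with the edge case $f=1$ giving the lifts $1,2,3,4$), all of which are at least $f$. This is exactly the first asserted inequality.

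For the second statement, I would apply the first inequality to an arbitrary flat connection $\gamma$ on $E|_Y$. Under the rationality hypothesis, both $p_1(E_\gamma,\gamma)$ and $p_1(E_\alpha,\alpha)$ lie in $\tfrac{1}{k}\Z$, hence so does their difference. The $(0,1]$-representative of that difference therefore belongs to $\{\tfrac{1}{k},\tfrac{2}{k},\ldots,\tfrac{k}{k}\}$ and is in particular $\ge \tfrac{1}{k}$. Taking the minimum over $\gamma$ yields $\tau(Y,\alpha) \ge 1/k$ from Definition \ref{defoftau}.

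I do not expect any genuine obstacle here: the lemma is essentially bookkeeping. The only subtle point is the asymmetry between the two intervals $(0,1]$ and $(0,4]$, and keeping careful track of the fact that a congruence $\bmod \Z$ forces the $(0,4]$-representative of the $\bmod 4$ class to differ from $f$ only by an integer in $\{0,1,2,3\}$, which is precisely what makes the lower bound by $f$ (rather than $f/4$) go through.
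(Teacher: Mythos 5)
Your argument is correct and is exactly the proof the paper has in mind: the authors omit it, noting only that it is "a simple consequence of the remarks in the paragraph following Definition \ref{csdefn}," and those remarks are precisely the congruences $\cs(Y,\alpha,\gamma)\equiv\cs(Y,\gamma)-\cs(Y,\alpha)\equiv p_1(E_\gamma,\gamma)-p_1(E_\alpha,\alpha)\bmod\Z$ that you combine with the elementary comparison of $(0,1]$- and $(0,4]$-representatives.
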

 
 As mentioned, $\hat\tau(Y,\alpha)$ provides a sufficient condition for compactness of the moduli space.

\begin{prop} \label{compactness} Suppose that $(E,\alpha)$ is an adapted bundle with  each $\alpha_i$ non-degenerate   and $0\leq p_1(E,\alpha)<\hat\tau(Y,\alpha)$.
 Then $\calm(E,\alpha)$ is compact.

\end{prop}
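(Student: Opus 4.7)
The strategy is the standard cylindrical-end Uhlenbeck-Taubes compactness argument, with the energy balance used to rule out the two failure modes of interior bubbling and asymptotic energy escape along the ends. I would follow the framework developed in \cite{MMR}, specialised to the non-degenerate setting considered here.

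Let $\{[A_n]\}$ be a sequence in $\calm(E,\alpha)$. After choosing gauge representatives and passing to a subsequence, Uhlenbeck's weak compactness theorem applied on an exhaustion of $X_\infty$ by compact sets produces a weak $L^{p,\delta}_1$-limit $A_\infty$ on $X_\infty$ away from a finite set of interior bubble points $\{x_1,\dots,x_N\}$. The asymptotic flat connections of the limit along the ends may a priori be gauge classes $\alpha'_i$ different from $\alpha_i$, and the limiting adapted bundle has some Pontryagin charge $p_1(A_\infty)\ge 0$. The non-degeneracy hypothesis on the $\alpha_i$ enters crucially here: by the MMR analysis, it ensures that the limits at the ends are genuine flat connections and that the convergence on compact subsets is accompanied by uniform decay control on the tails.

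The argument then hinges on the energy identity
\begin{equation*}
p_1(E,\alpha) \; = \; p_1(A_\infty) \; + \; \sum_{j=1}^{N} 4 n_j \; + \; \sum_{i=1}^{c} \Delta_i ,
\end{equation*}
where $n_j\ge 1$ denotes the charge of the bubble concentrating at $x_j$, and $\Delta_i\ge 0$ is the $L^2$-energy of the limiting trajectory escaping along the $i$-th end. Each bubble contributes a positive multiple of $4$: this is because any Uhlenbeck bubble at an interior point arises from an $SO(3)$ instanton on $S^4$, which lifts to an $SU(2)$ instanton of positive charge with $p_1\in 4\Z_{>0}$. Each end contribution $\Delta_i$ is realized as the $L^2$-energy of a limiting instanton on $\R\times Y_i$ joining $\alpha_i$ to some flat connection on $E|_{Y_i}$ (together with any bubbles absorbed into the end trajectory), so by the concatenation-additivity of the Chern-Simons invariant $\Delta_i$ reduces modulo $4\Z$ to a value in $S(\alpha_i)$. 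By Definition \ref{defoftau} we conclude that either $\Delta_i=0$ (with $\alpha'_i=\alpha_i$ up to gauge on that end) or $\Delta_i\ge \tau(Y_i,\alpha_i)\ge \hat\tau(Y,\alpha)$.

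Since $p_1(E,\alpha) < \hat\tau(Y,\alpha) \le 4$ by hypothesis, the identity above forces $N=0$ and $\Delta_i=0$ for every $i$, so no bubbling occurs and every asymptotic flat connection is preserved. With all of these exceptional behaviors excluded, the MMR machinery promotes the weak convergence on compact subsets to genuine $L^{p,\delta}_1$-convergence on all of $X_\infty$ modulo $\calg^\delta(E,\alpha)$, yielding $[A_\infty]\in\calm(E,\alpha)$ and $[A_n]\to[A_\infty]$. The principal technical obstacle is precisely this final upgrade: once bubbling and end-escape have been ruled out, one must show that the $A_n$ decay toward $\alpha_i$ along each end at the exponential rate dictated by $\delta$, uniformly in $n$. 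This uniform decay is the content of the non-degeneracy-based eigenvalue gap argument for the operators $*d_{\alpha_i}-d_{\alpha_i}*$, and is exactly what the $L^2$-moduli space theory of \cite{MMR} provides in our context.
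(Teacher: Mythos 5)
Your argument is correct and follows essentially the same route as the paper: both invoke the Morgan--Mrowka--Ruberman ``convergence with no loss of energy'' theorem, use the energy identity in which interior bubbles contribute multiples of $4$ and energy escaping down an end is congruent mod $4$ to a relative Chern--Simons value in $S(\alpha_i)$, and then rule out both failure modes via $0\le p_1(E,\alpha)<\hat\tau(Y,\alpha)\le 4$. The only cosmetic difference is that the paper phrases the end contribution as the charge of the first non-flat cylinder instanton with left-hand limit $\alpha_{i_j}$, whereas you aggregate it as a total end energy $\Delta_i$; the two bookkeepings are equivalent.
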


 \begin{proof}  This follows from the {\em convergence with no loss of energy} theorem of Morgan-Mrowka-Ruberman \cite[Theorem 6.3.3]{MMR} which says that a sequence of gauge equivalence classes of finite energy  instantons on $(E,\alpha)$ have a geometric limit which is the union of an idealized instanton on $X_\infty$ and a union of finite energy instantons on tubes (see also   \cite{taubes0} and \cite{Floer} for earlier versions of this result).  We give an outline and refer the reader to \cite{MMR} for details.  
 
 This  result states that any sequence of gauge equivalence classes of instantons $\{[A_i]\}_{i=1}^\infty$ in $\calm(E,\alpha)$  has a subsequence which converges in the following sense. 
The limit is a sequence of connections $(A_\infty, B_1,\cdots, B_k)$, where  $A_\infty$  is an idealized instanton on an adapted bundle $(E',\alpha')$ over $X_\infty$, and  $B_j ,  \  j=1,\cdots, k$ are instantons on the bundle $\R\times E|_{Y_{i_j}}$ over $\R\times Y_{i_j}$.  Here $i_j\in \{1,2,\cdots, c\}$.

More precisely, $A_\infty$ is a smooth  instanton
away from a finite set of points $\{x_m\}$ in $X_\infty$, and the curvature density $\|F(A_\infty)\|^2$ is the sum of a non-negative smooth function and a Dirac measure supported on these points with weight $32\pi^2 n_m$ for some non-negative integers $n_m$.
The instantons $B_j$ on the cylinders have finite,  positive energy: $0<\|F(B_j)\|^2<\infty$.  In particular none of the $B_j$ are flat.

 The connections $A_\infty$ and $B_j$  have  compatible boundary values, which means that   the formal sums of gauge equivalence classes of flat connections on $Y=\partial X$ are equal   
  $$ \alpha'_1 +\cdots+\alpha'_c=\alpha_1+\cdots +\alpha_c  +\sum_{j=1}^k \big(\lim_{r\to -\infty}B_j(r) -
  \lim_{r\to  \infty}B_j(r)\big),$$
  where $B_j(r)=B_j|_{\{r\}\times Y_{i_j}}$.
 Finally, no energy is lost, i.e. there is an equality of non-negative numbers
 \begin{equation}\label{energy}
p_1(E,\alpha)=p_1(E',\alpha')+\sum_{\{x_m\}} 4n_m+ \sum_{j=1}^k p_1(B_j).
\end{equation}

  Since $p_1(E,\alpha)<4$ and every term in (\ref{energy}) is non-negative,   the idealized instanton $A_\infty$ is necessarily an (honest) instanton, that is, each $n_m$ is zero, the set $\{x_m\}$ is empty, the bundle $E'$ equals $E$, and  $A_\infty$ is a smooth instanton on  $E$.    Informally, no energy can bubble off 
  at interior points.    
  
  Next, suppose that $k> 0$. Then the identification of the limiting flat connections implies that  one of the instantons $B_j$   
   on $\R\times E|_{Y_{i_j}}$ over $\R\times Y_{i_j}$ has left handed limit one of the $\alpha_i$, that is,
   $\lim_{r\to -\infty}B_{j}(r)=\alpha_{i_j}$.  Denote by $\gamma$ the limit $\lim_{r\to  \infty}B_{j}(r)$.  Then 
   $p_1(B_j)\equiv cs(Y_{i_j},\alpha_{i_j}, \gamma)$ mod 4, and since $B_j$ is not flat, $p_1(B_j)>0$.  Therefore
   $$p_1(B_{j})\ge \tau(Y_{i_j},\alpha_{i_j})\ge \hat\tau(Y,\alpha)>p_1(E,\alpha),$$ which is impossible, since every term in (\ref{energy}) is non-negative.  Therefore $k=0$.  Informally, no energy can escape down the ends of $X_\infty$.    Thus $A_\infty\in \calm(E,\alpha)$, as desired. 
 \end{proof}

\subsection{Reducible connections.} \label{reduciblesect}
We now discuss reducible adapted bundles and reducible connections.  We first  
 remind the reader of the extended intersection form of  a 4-manifold whose boundary is a union of rational homology spheres.
Start with the pairing $$H^2(X;\Z)\times H^2(X,Y;\Z)\to \Z,\ \ \ \ \ \ x\cdot y=(x\cup y)\cap[X,Y].$$    Let  $d$ be any positive integer so that $d\cdot H^2(Y;\Z)=0$.  Then this  produces a  well-defined   pairing 
 \begin{equation}
\label{intersectionform}
H^2(X;\Z)\times H^2(X;\Z)\to \tfrac{1}{d}\Z 
\end{equation}
 by  $$ \ x\cdot y:= \tfrac{1}{d}(x\cdot z), \text{ where }z\in H^2(X,Y;\Z)\text{ satisfies } i^*(z)=d\cdot y.$$
  
\noindent This can also be described as the restriction of the composite
 \begin{equation*}\begin{split}
 H^2(X;\Z)\times H^2(X;\Z)\to H^2(X;\R)\times H^2(X;\R)& \cong H^2(X,Y;\R)\times H^2(X,Y;\R)    \\
  &\xrightarrow{\cup}H^4(X,Y;\R)\cong\R.
\end{split} \end{equation*}

  Note that $x\cdot y=0$ if $x$ or $y$ is a torsion class.  Thus we say {\em $X$ has a negative definite intersection form} or more briefly  {\em $X$ is negative definite} if $x\cdot x<0$ whenever $x$ is not a torsion class.  One defines positive definite analogously. Call $X$  {\em indefinite} if it is neither positive nor negative definite.   
  
  Let $b^+(X)$ denote  the maximal dimension of any subspace of $H^2(X;\R)$ on which the intersection form is positive definite, and similarly define $b^-(X)$. The assumption that $X$ has boundary a union of rational homology 3-spheres implies that $\dim(H^2(X;\R))=b^+(X)+b^-(X)$. Therefore  $b^+(X)=0$ if and only if $X$ is negative definite.
  
  If $X$ is indefinite, then for a generic Riemannian metric the moduli space 
  $\calm(E,\alpha)$ is a smooth manifold. However, in contrast to some arguments in gauge theory,    the argument of \cite{FS,Furuta} makes use of the fact that reducibles cannot be perturbed away when $X$ is negative definite.  This technique originates in Donaldson's proof of his celebrated theorem  that   definite intersection forms of closed smooth 4-manifolds are diagonalizable \cite{donaldsonA}.  
Therefore we make the following assumption for the remainder of this section:
\begin{center}{\em The intersection form of $X$ is negative definite; that is, $x\cdot x<0$ unless $x$ is a torsion class. }\end{center} 

 Near the orbit of a reducible instanton $A$, an application of the slice theorem shows that  $\calm(E,\alpha)$ has the structure $V/\Gamma_A$ for a vector space $V$ of dimension $\Ind^+(E,\alpha)+\dim\Gamma_A$ \cite[Theorem 4.13]{donald1}.  This is understood by considering the  base point fibration (described above)
 $ SO(3)\to \cala^\delta(E,\alpha)/\calg_0^\delta(E,\alpha)\to \cala^\delta(E,\alpha)/\calg^\delta(E,\alpha)$. Restricting to $\calm(E,\alpha)$ gives an $SO(3)$ action on a smooth manifold   of dimension $\Ind^+(E,\alpha)+3$ with quotient   $\calm(E,\alpha)$ and stabilizer  $\Gamma_A$ over the instanton $A$.

\medskip

Subgroups of  $SO(3)$ have several possible centralizers, and this can lead to different types of singularities in the instanton moduli space.   For our purposes, it is sufficient to deal solely with orbit types of reducibles that have $SO(2)$-stabilizers.  In light of our assumption that $H_1(X)$ has no 2-torsion, we can ensure this with the following  (which, hereafter, will be assumed unless otherwise stated):
\begin{center}{\em The adapted bundle $(E,\alpha)$ is non-trivial. Equivalently, $\alpha$ does not extend to a connection on $E$ with trivial holonomy. }\end{center}

In typical applications either the bundle $E$ is non-trivial, or else one of the  $\alpha_i$ has non-trivial holonomy,  and so the assumption holds. 
  With this assumption, the singularities of $\calm(E,\alpha)$ are cones on $\C P^n$, where $2n+1=\Ind^+(E,\alpha)$. If $n>0$ the second Stiefel-Whitney class of the base point fibration  $\mathcal{E}\to \calm(E,\alpha)^*$ restricts on the link  of each reducible  instanton to the generator of $H^2(\C P^n;\Z/2)=\Z/2$ \cite[Section 9]{FSpseudo}.

\medskip

 Suppose there exists a reducible connection $A\in \cala^\delta(E,\alpha)$.  Since $\Gamma_A$ centralizes the holonomy group of $A$, the assumptions that $(E,\alpha)$ is non-trivial and $H^1(X;\Z/2)=0$   imply that $\Gamma_A$ is    conjugate to $SO(2)\subset SO(3)$. 
Moreover, for each boundary component $Y_i$, $\alpha_i$ is a reducible flat connection on $Y_i$ whose stabilizer contains a maximal torus.

The action of $\Gamma_A$ on the fiber $E_x$ of $E$ over a point $x\in X_\infty$ gives a splitting $E_x=L_x\oplus \epsilon_x$, where $\epsilon_x$  is the 1-eigenspace of $\Gamma_A$ and $L_x$ is its 2-dimensional orthogonal complement on which 
 $\Gamma_A$ acts non-trivially.   
    Parallel transport using $A$ then gives a decomposition $E=L_A\oplus \epsilon_A$ into the sum of an orthogonal plane bundle bundle $L_A$ and a  real line bundle $\epsilon_A$. The bundle $\epsilon_A$ is trivial since $H^1(X;\Z/2)=0$.   A choice of trivialization of $\epsilon_A$ together with the orientation of $E$ determines an orientation  (and hence the structure of an $SO(2)$ vector bundle)    on $L_A$.  The connection $A$ splits correspondingly as the direct sum of connections $A=a\oplus \theta$, where $a$ is an $SO(2)$  connection on $L_A$ and $\theta$ is the trivial connection on the trivial bundle.   In particular, the connection  $\alpha_i$ on each end  splits as
 $$\alpha_i=\beta_i\oplus \theta \text{ on } L_A|_{Y_i}\oplus \epsilon,$$
 where $\beta_i=\lim_{r\to\infty} a|_{\{r\}\times Y_i}$.
 The  flat  connection $\beta_i$ is essentially uniquely determined by the subbundle $L_A|_Y\subset E|_Y$, as the following lemma shows.

 \begin{lemma}\label{unique} Every $SO(2)$ vector bundle $L$ over a rational homology sphere $Z$  admits a flat $SO(2)$ connection.  Moreover, 
given any two  flat $SO(2)$ connections $\beta,\ \beta'$ on   $L$, there exists a path of $SO(2)$ gauge transformations $g_t$ with $g_0$ the identity and $g_1^*(\beta')=\beta$.
 \end{lemma}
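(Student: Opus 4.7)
The plan is to reduce both assertions to de Rham cohomology using the fact that $SO(2)\cong U(1)$ is abelian together with the rational homology sphere hypothesis in the form $H^1(Z;\R)=0$ and $H^2(Z;\R)=0$.

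For existence, I would identify the oriented $SO(2)$ bundle $L$ with a complex line bundle, classified up to isomorphism by $c_1(L)\in H^2(Z;\Z)$. The exponential sheaf sequence $0\to\Z\to\R\to U(1)\to 0$ induces the long exact sequence
$$H^1(Z;\R)\to H^1(Z;U(1))\to H^2(Z;\Z)\to H^2(Z;\R),$$
whose rightmost arrow is the coefficient map sending $c_1$ to its real image. Since $H^2(Z;\R)=0$, the connecting homomorphism is surjective, so via the standard identification of $H^1(Z;U(1))\cong\Hom(\pi_1(Z),U(1))$ with gauge equivalence classes of flat $SO(2)$ connections on $Z$, every isomorphism class of $SO(2)$ bundle over $Z$ is represented by a flat one. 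Hence $L$ admits a flat connection.

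For the uniqueness statement, let $\beta,\beta'$ be two flat $SO(2)$ connections on $L$. Since $\mathfrak{so}(2)\cong\R$ is abelian, the difference $a:=\beta-\beta'$ is a globally defined real-valued $1$-form on $Z$ (the transition factors in local trivialisations cancel), and flatness of both connections yields $da=0$. The hypothesis $H^1(Z;\R)=0$ then provides a smooth $f\co Z\to\R$ with $df=a$. Using the universal cover $\exp\co\R\to SO(2)$, define $g_t:=\exp(tf)\co Z\to SO(2)$; this is a smooth path of $SO(2)$ gauge transformations with $g_0=\mathrm{id}$, and by abelianness
$$g_1^*\beta'=\beta'+g_1^{-1}dg_1=\beta'+df=\beta,$$
as required. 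The only potential subtlety, that the primitive $f$ be a single-valued real function rather than merely defined modulo periods in $2\pi\Z$, is exactly what the vanishing of $H^1(Z;\R)$ ensures; since the abelian case renders the gauge equation linear, no genuine obstacle arises.
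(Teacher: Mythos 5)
Your proof is correct, but it routes the two halves of the lemma differently from the paper — in fact, almost in the opposite order. For existence the paper argues directly at the level of forms: pick any connection $b$ on $L$, note that $\tfrac{i}{2\pi}F(b)$ is exact because $H^2(Z;\R)=0$, and add a $1$-form to kill the curvature; you instead deduce existence from surjectivity of the Bockstein $H^1(Z;U(1))\to H^2(Z;\Z)$ in the exponential sequence, which is the piece of machinery the paper reserves for the \emph{uniqueness} step (and develops further in its Lemma on linking forms). Conversely, for uniqueness the paper shows that holonomy determines the gauge class of a flat connection and that the Bockstein $H^1(Z;\R/\Z)\to H^2(Z;\Z)$ is injective, so any two flat connections on $L$ are gauge equivalent, and then invokes path-connectedness of $\calg(L)=C^\infty(Z,U(1))$ (from $[Z,U(1)]=H^1(Z;\Z)=0$) to connect the gauge transformation to the identity; your argument is more elementary and explicit, writing $\beta-\beta'=df$ via $H^1(Z;\R)=0$ and exhibiting the path $g_t=\exp(tf)$ by hand, which simultaneously produces the gauge equivalence and the homotopy to the identity without any appeal to holonomy. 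Both arguments are sound (modulo harmless sign and normalization conventions in $g^{-1}dg$); the paper's version has the side benefit of setting up the holonomy--Chern class dictionary it needs later, while yours is shorter and self-contained.
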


\begin{proof}  For simplicity identify $SO(2)$ with $U(1)$.  Choose any $U(1)$ connection $b$ on $L$.  Since $H^2(Z;\R)=0$, the real form $ \tfrac{i}{2\pi}F(b)$ is exact.     Adding  a    1-form $i\omega$ to $b$ changes $F(b)$ to $F(b)+id\omega$, and so there exists an $\omega$ for which $F(b+i\omega)=0$, i.e. $\beta=b+i\omega$ is flat.

The group of gauge transformations of the bunlde $L$, $\calg(L)=C^\infty(Z,U(1))$, has a single path component, since   $[Z,U(1)]=H^1(Z;\Z)=0$.  A flat connection $\beta$ on $L$ has a holonomy representation
$$\text{hol}_{\beta}\in \chi(\pi_1Z, U(1)) =
 \Hom(\pi_1Z, U(1))=H^1(Z;\R/\Z).$$ The Bockstein in the coefficient exact sequence $H^1(Z;\R/\Z)\to H^2(Z;\Z)$ takes $\text{hol}_{\beta}$ to $c_1(L)$  (see below) and is an isomorphism since $Z$ is a rational homology sphere.   Since the holonomy determines the gauge equivalence class of a flat connection, it follows that any two flat connections $\beta,\beta'$ on $L$ are gauge equivalent, and that there is a path  $g_t$ of gauge transformations with $g_0=Id$ and $g_1^*(\beta')=\beta$.
  \end{proof}

The relationship between the  holonomy $U(1)$ representation and the first Chern class used in the proof of Lemma \ref{unique} can be explained in the following way. Let $U(1)_d$ denote $U(1)$ with the discrete topology. The identity map $i:U(1)_d\to U(1)$   induces a map $Bi:BU(1)_d\to BU(1)$ on classifying spaces. Since  $BU(1)_d=K(\R/\Z ,1)$ and $BU(1)=K(\Z,2)$, we have a sequence of (natural) isomorphisms  for any finite abelian group $A$

\begin{equation*}\begin{split}
\Hom(A,U(1)) = H^1(K(A,1);\R/\Z)&=[K(A,1), BU(1)_d]\\
&\xrightarrow{Bi_*}[K(A,1),BU(1)]=H^2(K(A,1);\Z).
\end{split}\end{equation*}
The identity map $BU(1)\to BU(1)$, viewed as a class in $H^2(BU(1);\Z)$, is the first Chern class of the universal line bundle, and so naturality implies that a homomorphism $h:A\to U(1)$ is sent to  $c_1(L)$, where   $L$ is the pull back of the universal bundle via $ K(A,1)\xrightarrow{Bh} K(U(1)_d,1) \xrightarrow{Bi}BU(1)$.  Moreover, the map $Bi_*$ can be identified with the Bockstein.

Given a manifold $W$ equipped with a homomorphism $h:\pi_1(W) \to U(1)$ with finite image, view $h$ as an element of $H^1(W;\R/\Z)$. Then $h$ is sent by the Bockstein to the first Chern class of the $U(1)$ bundle that supports a flat connection with holonomy $h$.

In the case of a rational homology 3-sphere $Z$,  we have a sequence of isomorphisms
\begin{equation*}
\Hom(\pi_1(Z), \Q/\Z)=\Hom(H_1(Z),\Q/\Z)=H^1(Z;\Q/\Z)\xrightarrow{B}H^2(Z;\Z)\xrightarrow{\cap [Z]} H_1(Z;\Z)  
\end{equation*}
with $B$ the Bockstein and the last isomorphism Poincar\'e duality.  The inverse of this composite is given by the linking form,   $m\mapsto \lk(m,-):H_1(Z;\Z)\to \Q/\Z.$   Thus we have shown the following.
\begin{lemma}  \label{bock} Suppose $Z$ is a rational homology 3-sphere and  $c_1\in H^2(Z;\Z)$ a class with Poincar\'e dual $m\in H_1(Z;\Z)$.  
Then the  holonomy of the flat $U(1)$ bundle  with first Chern class  $c_1$ is given by 
$\text{\rm hol}(\gamma)=\lk(m,\gamma)$, where
$$\lk:H_1(Z;\Z)\times H_1(Z;\Z)\to \Q/\Z \xrightarrow{\exp(2\pi i -)}U(1)$$ is 
the linking form.\qed
\end{lemma}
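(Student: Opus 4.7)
The plan is to compute both $\hol(\gamma)$ and $\lk(m,\gamma)$ on a compact bounding 4-manifold, using curvature on one side and Poincar\'e-Lefschetz duality on the other, and then match the results.

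Choose a compact oriented 4-manifold $W$ with $\partial W=Z$; after 1-surgery we may assume $H_1(W;\Z)=0$. Since $H^3(BU(1))=0$, the line bundle $L$ extends to a complex line bundle $\tilde{L}$ over $W$ with $c_1(\tilde L)|_Z=c_1(L)$, and we extend the flat connection $\beta$ on $L$ to a unitary connection $\tilde\beta$ on $\tilde L$. The closed 2-form $F(\tilde\beta)/(2\pi i)$ represents $c_1(\tilde L)$ in de Rham cohomology and vanishes along $Z$ because $\beta$ is flat. Given $\gamma\in H_1(Z;\Z)$, the vanishing of $H_1(W)$ lets us choose a 2-chain $D\subset W$ with $\partial D=\gamma$. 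Applying Stokes' theorem to the parallel-transport equation for $\tilde\beta$ (and using $\tilde\beta|_Z = \beta$) yields
$$\hol_\beta(\gamma)\ \equiv\ \langle c_1(\tilde{L}),[D]\rangle \pmod{\Z},$$
and this reduction is independent of $D$, since two choices differ by an absolute 2-cycle against which $c_1(\tilde L)\in H^2(W;\Z)$ pairs integrally.

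Next, represent $c_1(\tilde L)$ by its Poincar\'e-Lefschetz dual $[\tilde m]\in H_2(W,Z;\Z)$; the naturality of Poincar\'e duality with the boundary operator gives $\partial \tilde m = PD_Z(c_1(L))=m$. Taking $\tilde m$ transverse to $D$, we have $\langle c_1(\tilde L),[D]\rangle = \tilde m\cdot_W D$. To identify this intersection with the linking form, choose a 2-chain $C\subset Z$ with $\partial C=k\gamma$; then $kD-C$ is an absolute 2-cycle in $W$. Pairing $c_1(\tilde L)$ with this cycle is an integer, so
$$k\,\tilde m\cdot_W D\ \equiv\ \tilde m\cdot_W C\ \equiv\ m\cdot_Z C \pmod{\Z},$$
where the last equality uses that the intersection of $\tilde m$ with $C\subset \partial W$ reduces, after transversality, to the intersection of $\partial \tilde m = m$ with $C$ inside $Z$. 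By definition $m\cdot_Z C = k\,\lk(m,\gamma)$, giving $\tilde m\cdot_W D \equiv \lk(m,\gamma)\pmod{\Z}$ and completing the identification.

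The main obstacle is bookkeeping: the sign in the holonomy-versus-curvature formula, the orientation convention for $\tilde m$, and the sign in the linking form must all be lined up consistently to produce the stated equality rather than its negative. A computation on a standard lens space $L(p,1)$, where both $\hol$ and $\lk$ are explicitly computable on the generator of $H_1$, pins down any residual sign ambiguity.
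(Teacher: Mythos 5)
Your argument is correct in substance, but it takes a genuinely different route from the paper. The paper proves the lemma purely homotopy-theoretically: it identifies the assignment (holonomy representation) $\mapsto$ (first Chern class of the associated flat bundle) with the Bockstein $H^1(Z;\Q/\Z)\to H^2(Z;\Z)$ by comparing the classifying spaces $BU(1)_d=K(\R/\Z,1)$ and $BU(1)=K(\Z,2)$, and then invokes the standard fact that the inverse of the composite $\Hom(H_1(Z),\Q/\Z)\xrightarrow{B}H^2(Z;\Z)\xrightarrow{\cap[Z]}H_1(Z;\Z)$ is $m\mapsto \lk(m,-)$. You instead give a direct Chern--Weil/Stokes computation over a bounding $4$-manifold $W$ with $H_1(W)=0$, expressing the holonomy as an intersection number $\tilde m\cdot_W D$ and identifying that number mod $\Z$ with the linking form; in effect you re-prove geometrically the ``linking form is inverse to Bockstein-plus-duality'' fact that the paper simply quotes. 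The paper's route is shorter and choice-free; yours is more self-contained and makes the sign and orientation conventions traceable, which is actually relevant downstream where the lemma is used to compute $\ell_i=-b_i$ in the Seifert-fibered examples. Two small points: the reason the line bundle extends over $W$ is not that $H^3(BU(1))=0$ but that the obstruction lives in $H^3(W,Z;\pi_2(BU(1)))\cong H_1(W;\Z)=0$ (equivalently, $H^2(W)\to H^2(Z)$ is onto when $H_1(W)=0$), so your surgery step is what does the work; and the manipulations of $\tilde m\cdot_W D$ and $\tilde m\cdot_W C$ with relative chains need the usual care about pushing $C$ off the boundary, though the mod-$\Z$ conclusions survive. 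Neither affects the validity of the argument.
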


Identifying $U(1)$ and $SO(2)$ via $\C=\R^2$ allows us to make the analogous statement, substituting the Euler class for $c_1$.

  \medskip

Suppose that $(E,\alpha)$ is an adapted bundle which admits a reducible connection $A$.  Then 
$(E,\alpha)=(L\oplus \epsilon, \beta\oplus \theta)$, where $\beta$ is unique in the sense of Lemma \ref{unique}.   Conversely, if $\epsilon\subset E$ is a trivializable real line bundle and $L$ the orthogonal 2-plane bundle, then  orient $\epsilon$ (and hence $L$). Let $\beta$ denote the flat $SO(2)$ connection carried by $L$  over the  end and  set $\alpha=\beta\oplus \theta$. Extend $\beta$ to a connection $a$ on $L$.  Then $A=a\oplus \theta$ is a reducible connection in $\cala(E,\alpha)$.  This establishes  the first assertion in the following lemma.

\begin{lemma}\label{support}
An adapted bundle $(E,\alpha)$ supports  a reducible connection if and only if there exists a nowhere zero section $s$ of $E$, and hence a splitting $E=L\oplus \epsilon$ with $L$  the orthogonal complement  of $\epsilon=\spa s$,
and, on the ends, $\alpha=\beta\oplus \theta$ where $\beta$ is the (unique up to homotopy) flat connection on $L$.

If an adapted bundle $(E,\alpha)$ supports a reducible connection inducing a splitting $E=L\oplus \epsilon$, it supports a reducible instanton in $\calm(E,\alpha)$ inducing the same splitting.
 \end{lemma}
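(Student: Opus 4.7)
The discussion leading up to the statement already establishes the first assertion, so the plan is to prove the second. Fix the orthogonal splitting $E = L \oplus \epsilon$ determined by the given reducible connection, so that $\alpha = \beta \oplus \theta$ on each end, and identify $SO(2)$ with $U(1)$ to view $L$ as a Hermitian line bundle. The plan is to construct an anti-self-dual $U(1)$ connection $a$ on $L$ lying in the adapted configuration space (limiting exponentially to $\beta$ on each end); then $A = a \oplus \theta$ will be an instanton in $\cala^\delta(E,\alpha)$ manifestly inducing the same splitting.

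First I would fix an initial extension $a_0$ of $\beta$ to $L$ which is in cylindrical form $\pi^* \beta$ on every end, so that $F(a_0) = i\phi$ for a compactly supported real 2-form $\phi$. I would then seek the desired connection in the form $a = a_0 + i\omega$ for a real 1-form $\omega \in L^{3,\delta}_1(\Omega^1(X_\infty;\R))$. The anti-self-duality condition $F(a)^+ = 0$ reduces to the linear elliptic equation
\[ d^+ \omega = -\phi^+. \]
Since $\phi^+$ is compactly supported it lies in $L^{3,\delta}(\Omega^+)$, so everything reduces to surjectivity of
\[ d^+ \colon L^{3,\delta}_1(\Omega^1(X_\infty;\R)) \to L^{3,\delta}(\Omega^+(X_\infty;\R)). \]

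I expect this surjectivity to be the main obstacle. I would argue that elements of its cokernel are represented by $L^2$-extended harmonic self-dual 2-forms on $X_\infty$, exactly via the integration-by-parts argument used in the cokernel computation in the proof of Proposition \ref{APSindex}. Because the boundary components are rational homology 3-spheres and the $\alpha_i$ are non-degenerate (automatic on the trivial bundle for small $\delta$), the Atiyah--Patodi--Singer framework identifies this space with the image of the positive-definite subspace of the intersection form inside $H^2(X;\R)$. Under the standing hypothesis that $X$ is negative definite, i.e.\ $b^+(X) = 0$, this vanishes, so $d^+$ is surjective, a solution $\omega$ exists, and $a = a_0 + i\omega \in \cala^\delta(L,\beta)$ yields the required reducible instanton $A = a \oplus \theta$.
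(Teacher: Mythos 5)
Your proposal is correct, and the first assertion is indeed handled by the discussion preceding the lemma in the paper as well. For the second assertion both arguments boil down to the same two facts --- the abelian ASD equation is linear, and $b^+(X)=0$ kills the obstruction --- but the implementations differ. The paper works at the level of curvature: it invokes Proposition 4.9 of \cite{APS1} to produce an $L^2$ harmonic representative $\omega$ of $e(L)$, observes that negative definiteness forces $*\omega=-\omega$, chooses a connection $a$ on $L$ with $\tfrac{1}{2\pi}\Pf(F(a))=\omega$, and then must do two further things your route avoids: correct the limiting flat connection on the ends by a gauge transformation supplied by Lemma \ref{unique} (extended over $X_\infty$ using path-connectedness of the abelian gauge group), and appeal to \cite[Theorem 4.2]{donald1} to upgrade ``finite-energy instanton asymptotic to a non-degenerate flat limit'' to actual membership in $L^{p,\delta}_1$, i.e.\ in $\cala^\delta(E,\alpha)$. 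You instead work from the start in the weighted configuration space, writing $a=a_0+i\omega$ with $a_0$ cylindrical and solving $d^+\omega=-\phi^+$; the decay and the correct limit are then automatic, and the only input is surjectivity of $d^+$ on the weighted spaces. That surjectivity is exactly the vanishing of the $\Omega^+$-component of the cokernel of the untwisted operator $S^{APS}=d^*\oplus d^+$, which the proof of Proposition \ref{APSindex} already identifies (via the integration-by-parts argument and the absence of harmonic $1$- and $2$-forms on rational homology sphere ends) with the $b^+(X)$-dimensional space of $L^2$ harmonic self-dual forms; since the adjoint $u(1)$-action is trivial, the untwisted computation is the relevant one, as you implicitly use. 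So your argument is a legitimate and somewhat more self-contained alternative; the trade-off is that the paper's version makes the geometric content ($F(a)$ is the harmonic anti-self-dual representative of $e(L)$) explicit, which is then reused in Proposition \ref{lem2.4}.
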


 \begin{proof}     The first part was justified in the preceding paragraph.
Suppose that $(E,\alpha)=(L,\beta\oplus \theta)$. 
Since $Y$ is a union of rational homology spheres,  Proposition 4.9 of \cite{APS1} shows that the  space of $L^2$ harmonic 2-forms   $ \{\omega\in L^2(\Omega^2_{X_\infty})  |   d\omega=0=d^*\omega\}
$  maps isomorphically to $H^2(X;\R)$.    Thus there exists an $L^2$ harmonic  2-form $\omega$ on $X_\infty$ representing the class $e(L)\in H^2(X;\R)$.  In light of the facts that $X$ is negative definite, the Hodge $*$-operator preserves  $L^2$ harmonic 2-forms,  and $*^2=1$, it follows that $*\omega=-\omega$.   Choose a connection $a$ on $L$. Then $\tfrac{1}{2\pi}\Pf(F(a))$ and $\omega$ are cohomologous by Chern-Weil Theory, where Pf denotes the pfaffian of an $so(2)$ matrix. By adding an $so(2)$ valued 1-form to $a$ one can change $\Pf(F(a))$ arbitrarily within its cohomology class.     Thus  we may choose $a$ so that $\tfrac{1}{2\pi}\Pf(F(a))=\omega$.  

The connection $a$ limits to a flat connection on $L$ along the ends.   Thus Lemma \ref{unique} implies $a$ limits to $h^{-1}(\beta)$ for some gauge transformation $h:L|_Y\to L|_Y$ which  can be connected to the identity by  a path of gauge transformations.   Extend $h$ to a gauge transformation $\tilde h$ of all of $L$ by the identity on $X$, the path from the identity to $h$ on $[0,1]\times Y$,  and $h$ on $[1,\infty)\times Y$.

Replacing $a$   by $\tilde h(a)$ does not change $F(a)$ since $SO(2)$ is abelian, and $h(a)$ limits to $\beta$.  Let $A=\tilde h(a)\oplus \theta$ on the $SO(3)$ vector bundle $L\oplus \epsilon$. 
Then $A$ is a reducible instanton on $(E,\alpha)=(L\oplus \epsilon,\beta\oplus\theta)$ with $L^2$-curvature.  
Since we assumed that the flat connection $\alpha$  on $Y$ is non-degenerate,   it follows  from \cite[Theorem 4.2]{donald1} that   $A\in L^{p,\delta}_1$ and so $A\in \calm(E,\alpha)$.
 \end{proof}

  The following proposition determines the Pontryagin charge of a reducible instanton in terms of the intersection form.

\begin{prop} \label{lem2.4} Let   $e\in H^2(X;\Z)$. Let $L\to X_\infty$ be the $SO(2)$ vector bundle with Euler class $e$.   Let  $\beta$ be the  flat connection on the restriction $L|_Y$, and $(E,\alpha)=(L\oplus \epsilon, \beta\oplus \theta)$ the corresponding adapted bundle. 

 Then
$$p_1(E,\alpha)=-e\cdot e\in \tfrac{1}{d}\Z\text{ and } w_2(E)\equiv e \text{\rm  \ mod } 2.$$ 
 \end{prop}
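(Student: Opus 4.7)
The Stiefel--Whitney assertion is immediate: by the Whitney sum formula, $w_2(E) = w_2(L) + w_1(L)w_1(\epsilon) + w_2(\epsilon) = w_2(L)$ since $\epsilon$ is trivial, and for an oriented rank-two real bundle $L$ one has $w_2(L) \equiv e(L) \pmod 2$. I would dispatch this in a sentence.

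For the Pontryagin charge, my plan is to construct a particularly convenient extension of $\alpha$ and compute $p_1$ by direct Chern--Weil integration. By Lemma \ref{unique}, the flat $SO(2)$ connection $\beta$ on $L|_Y$ can be taken in cylindrical form $\pi^*(\tilde\beta)$. I would choose a smooth $SO(2)$ connection $a$ on $L\to X_\infty$ that equals $\pi^*(\tilde\beta)$ on the cylindrical portion $[0,\infty)\times Y$ --- such an extension exists by interpolating on the collar between any smooth $SO(2)$ connection on $L$ and the flat connection at infinity. Because $\tilde\beta$ is flat, the curvature $F(a)$ then vanishes on the ends and is compactly supported in the interior $X$. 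Setting $A_0 = a\oplus\theta$ yields an $SO(3)$ connection on $E = L\oplus\epsilon$ which lies in $\cala^\delta(E,\alpha)$, extends $\alpha = \beta\oplus\theta$, and has compactly supported curvature $F(A_0)$.

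Next I would unwind the trace. Writing $F(a) = \Omega\otimes\xi$, where $\xi$ is a generator of the one-dimensional Lie algebra $\mathfrak{so}(L)\subset \mathfrak{so}(E)$ and $\Omega$ is an ordinary compactly supported 2-form on $X_\infty$, a direct block computation in the splitting $E = L\oplus\epsilon$ shows that $\Tr(F(A_0)\wedge F(A_0))$ is a fixed scalar multiple of $\Omega\wedge\Omega$. With the sign conventions of Section \ref{chsi}, this yields
\begin{equation*}
p_1(A_0) \;=\; -\int_{X_\infty} \tfrac{\Omega}{2\pi}\wedge \tfrac{\Omega}{2\pi}.
\end{equation*}

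Finally, I would identify the right-hand side with $-e\cdot e$. By Chern--Weil theory, $\tfrac{1}{2\pi}\Pf(F(a)) = \pm\tfrac{1}{2\pi}\Omega$ is a de Rham representative of $e(L)\in H^2(X_\infty;\R)$. Since $\Omega$ has compact support in $X$, it simultaneously defines a class $\tilde e \in H^2_c(X_\infty;\R)\cong H^2(X,Y;\R)$ whose image under the restriction $H^2(X,Y;\R)\to H^2(X;\R)$ is $e$. By the definition of the extended intersection form (\ref{intersectionform}), the quantity $\int_{X_\infty}\tilde e\wedge\tilde e$ is precisely $e\cdot e$, since that pairing on $H^2(X;\R)$ is defined via exactly this isomorphism followed by $H^2(X,Y;\R)\otimes H^2(X,Y;\R)\to H^4(X,Y;\R)\cong\R$. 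Combining gives $p_1(E,\alpha) = -e\cdot e$, as claimed. The only real bookkeeping is the sign reconciliation in the Chern--Weil computation and confirming that the compactly supported de Rham representative of $\Omega/(2\pi)$ is a genuine lift of $e$ through $H^2(X,Y;\R)\cong H^2(X;\R)$; neither is a serious obstacle once the extension $A_0$ with compactly supported curvature has been produced.
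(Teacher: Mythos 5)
Your proposal is correct and follows essentially the same route as the paper: extend $\beta$ to a connection $a$ on $L$ with curvature supported in the compact part, set $A_0=a\oplus\theta$, reduce $\Tr(F(A_0)\wedge F(A_0))$ to the square of the Pfaffian form via the block inclusion $so(2)\subset so(3)$, identify $\tfrac{1}{2\pi}\Pf(F(a))$ as a compactly supported de Rham representative of $e$, and evaluate the integral using the definition of the extended intersection form; the $w_2$ claim is handled by the Whitney sum formula exactly as in the paper. The only cosmetic difference is that the paper writes out the constant in the trace identity explicitly rather than deferring it as bookkeeping.
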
  

 \begin{proof}   Choose (arbitrarily) a connection $a_0$ on $L$ which extends the flat connection $\beta$ on the ends, and let $A_0=a_0\oplus \theta$ be the corresponding connection on $E=L\oplus \epsilon$; this connection agrees with   $\alpha=\beta\oplus \theta$ over the ends, and hence lies in $\cala^\delta(E,\alpha)$.

Let $\hat e$ denote the differential 2-form on $X_\infty$  
$$\hat e= \tfrac{1}{2\pi}{\text{\rm Pf}}(F(a_0)).$$
 A straightforward calculation starting with the formula for the inclusion of Lie algebras $  so(2)\subset so(3)$ gives
\begin{equation*}\begin{split}
-\frac{1}{8\pi^2}\Tr(F(A_0)\wedge F(A_0))& =-\tfrac{1}{2}\Tr(\tfrac{1}{2\pi}F(A_0) \wedge \tfrac{1}{2\pi}F(A_0))\\
&=-\tfrac{1}{2\pi}{\text{\rm Pf}}(F(a_0) )\wedge \tfrac{1}{2\pi}{\text{\rm Pf}}(F(a_0))\\
&=-\hat{e}\wedge \hat{e}.
\end{split}\end{equation*}
Hence $$p_1(A_0)=p_1(E,\alpha)= -\int_{X_\infty} \hat{e}\wedge \hat{e}.$$

 Chern-Weil theory implies that $\hat e$ is a closed 2-form. It vanishes on the ends  since $a_0$ is flat on the ends.  On a {\em closed}   4-manifold  $Z$  Chern-Weil theory implies that $\hat e$ represents (in DeRham cohomology) the image of the Euler class $e$ under the coefficient homomorphism $H^2(Z;\Z)\to H^2(Z;\R)$.  Since $H^2(X;\R)=\Hom(H_2(X;\Z),\R)$     and every class in $H_2(X;\Z) $ is represented by a closed surface, it follows that $\hat e$ represents the image of $e$ in $H^2(X;\R)$.

For  $x,y\in H^2(X,Y;\R)$ represented by closed forms that vanish on the boundary, $x\cdot y= \int_{X}x\wedge y$.  Since $H^2(X,Y;\R)\to H^2(X;\R)$ is an isomorphism, it follows that 
 $$p_1(E,\alpha)=-\int_{X} \hat{e}\wedge \hat{e}=-e\cdot e.$$
 The congruence $w_2(E)\equiv e \text{\rm  \ mod } 2 $ follows from the general congruence $w_2(L\oplus \epsilon)\equiv e(L)$ mod $2$, which is valid for any $SO(2)$ bundle $L$ over any space.  
 \end{proof}

\subsection{Enumeration of reducible instantons}
  We next address the problem of enumerating the reducible instantons in $\calm(E,\alpha)$. The results of the previous section allow us to  reduce this problem to the enumeration of bundle reductions.  These in turn determine certain cohomology classes and motivate the following definition.

\begin{definition} Let $(E,\alpha)=(L\oplus\epsilon,\beta\oplus \theta)$ with $L$ non-trivial. Fix an orientation of $\epsilon$, orienting $L$. Let $e=e(L)\in H^2(X;\Z)$ denote the Euler class of $L$. 
 Define
  $$C(e)=\{e'\in H^2(X;\Z)\ | \ e'\cdot e'=e \cdot e , \ e'\equiv e \text{\rm \ mod } 2,\     i_j^*(e')=\pm i_j^*(e),\ j=1,\cdots,c  \}/\scriptstyle{\pm 1}
  $$
    where  $i_j:Y_j\subset Y\subset X$ denotes the inclusion of the $j$th boundary component. Our standing assumption that $X$ is negative definite implies that $C(e)$ is a finite set. 
  \end{definition}

\medskip

   Suppose that $L,L'$ are two  orthogonal plane  subbundles of $E$ and that $(E,\alpha)=(L\oplus \epsilon,\beta\oplus \theta)=(L'\oplus \epsilon',\beta'\oplus \theta)$.  Orient $L$ and $L'$ arbitrarily and let $e=e(L)$, $e'=e(L')$.
 Then $e\equiv e'$ mod 2 and $e\cdot e=e'\cdot e'$ by Proposition \ref{lem2.4}.   
Fix a boundary component $Y_j$ of $X$. If $\alpha_j$ is non-trivial, then since $\beta_j\oplus\theta=\alpha_j=\beta_j'\oplus \theta$, the {\em unoriented subbundles} 
$L|_{Y_j}$ and $ L'|_{Y_j}$   are equal, since their orthogonal complements coincide.  But if $\alpha_j$ is the trivial connection, $L|_{Y_i}$ and $ L'|_{Y_i}$  need not coincide, although they are both trivial bundles. 
In either case, $i_j^*(e)=\pm   i_j^*(e')$.  Changing the orientation of $L'$ changes the sign of $e'$.     Since $X$ is connected,   it follows that the  function taking a reducible connection $A\in\cala^\delta(L\oplus\epsilon, \beta\oplus \theta)_{red}$ to $e(L_A)\in C(e) $ is well-defined.

  \begin{prop} \label{reducibles}   The  function $\Phi:\calm(E,\alpha)_{red}\to C(e)$ defined by $\Phi([A])=e(L_A)/\scriptstyle{\pm1}$ is well-defined and injective.
If $C(e)$ consists of a single point then $\Phi$ is a bijection.
 \end{prop}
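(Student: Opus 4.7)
The plan is to verify well-definedness of $\Phi$ into $C(e)$, establish injectivity by reducing the $SO(3)$ problem to uniqueness of abelian ASD connections on a line bundle, and handle surjectivity in the single-point case by direct appeal to Lemma \ref{support}.

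To show well-definedness, any gauge transformation $g \in \calg^\delta(E,\alpha)$ carries the $\Gamma_A$-splitting $E = L_A \oplus \epsilon_A$ to the corresponding splitting for $g^*A$, and since $g$ is a fiberwise isometry, $e(L_{g^*A}) = \pm e(L_A)$. The three conditions defining $C(e)$ then follow directly: Proposition \ref{lem2.4} gives $e(L_A)\cdot e(L_A) = -p_1(E,\alpha) = e\cdot e$ and $e(L_A) \equiv w_2(E) \equiv e \pmod 2$, while $i_j^*(e(L_A)) = \pm i_j^*(e)$ is the dichotomy worked out in the paragraph preceding the proposition (either $\alpha_j$ is non-trivial and $L_A|_{Y_j}$ equals $L|_{Y_j}$ as the unique unoriented orthogonal complement of the $1$-eigenspace of the stabilizer, or $\alpha_j$ is trivial and both restrictions carry the trivial flat connection, forcing vanishing Euler class by Lemma \ref{bock}).

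For injectivity, suppose $\Phi([A_1]) = \Phi([A_2])$. Classification of $SO(2)$ bundles over $X_\infty$ by $H^2(X;\Z)$ yields an isomorphism $L_{A_1} \cong L_{A_2}$, which together with a compatible identification of trivial line subbundles extends to an $SO(3)$ bundle automorphism $\phi \in \calg^\delta(E,\alpha)$. After replacing $A_2$ by $\phi^*A_2$, we obtain two reducible instantons $A_i = a_i \oplus \theta$ sharing the splitting $L_{A_1} \oplus \epsilon_{A_1}$, with $a_1, a_2$ being $SO(2)$-ASD connections on $L_{A_1}$ that limit (modulo $SO(2)$-gauge, by Lemma \ref{unique}) to the same flat $\beta$. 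Writing $a_2 - a_1 = i\xi$ with $\xi \in L^{3,\delta}_1(\Omega^1_{X_\infty})$, the ASD conditions force $d\xi$ to be anti-self-dual, and together with exactness the identity $d^*(d\xi) = -{*}d({*}d\xi) = 0$ makes $d\xi$ an $L^2$-harmonic $2$-form; Proposition 4.9 of \cite{APS1} identifies the space of such forms with $H^2(X;\R)$ under the rational-homology-sphere boundary hypothesis, and exactness of $d\xi$ then forces $d\xi = 0$. Since $H^1(X;\R) = 0$ (implied by our standing hypothesis $H^1(X;\Z/2) = 0$), the closed form $\xi$ equals $df$ for some real function $f$, and $g = e^{if}$ is the required $U(1)$-gauge transformation carrying $a_1$ to $a_2$.

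The main obstacle I expect lies in the abelian uniqueness step, specifically verifying that $g = e^{if}$ assembles into an honest element of $\calg^\delta(E,\alpha)$: one must check that $\xi \in L^{3,\delta}_1$ forces $\nabla g \in L^{3,\delta}_1$ and that the potentially distinct end-limits of $f$ are harmless, which reduces to the observation that constant $SO(2)$ elements always stabilize the reducible $\beta_j$. Once injectivity is established, surjectivity when $|C(e)| = 1$ is immediate: Lemma \ref{support} produces a reducible instanton on $(E,\alpha) = (L \oplus \epsilon, \beta \oplus \theta)$ whose $\Phi$-image necessarily lies in the singleton $C(e) = \{[e]\}$, so $\Phi$ is onto.
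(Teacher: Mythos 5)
Your proof is correct and follows the same overall strategy as the paper's: well-definedness via the action of gauge transformations on splittings, injectivity via the classification of $SO(2)$ bundles by their Euler classes, and surjectivity in the singleton case via Lemma \ref{support}. The one substantive difference is in the injectivity step. The paper's proof constructs the gauge transformation $g=h\oplus I$ matching the two splittings and then immediately concludes that $A$ and $B$ represent the same point of the moduli space; it leaves implicit the fact that, once the two instantons are arranged to respect a common reduction $L\oplus\epsilon$, the two $SO(2)$ ASD connections on $L$ are themselves gauge equivalent. You supply this missing step explicitly: the difference $i\xi$ of the two abelian connections has $d\xi$ exact and anti-self-dual, hence closed and coclosed, hence an $L^2$-harmonic $2$-form, which must vanish by the Atiyah--Patodi--Singer identification of $L^2$-harmonic $2$-forms with $H^2(X;\R)$; then $H^1(X;\R)=0$ gives $\xi=df$ and the gauge transformation $e^{if}$, whose membership in $\calg^\delta(E,\alpha)$ is checked exactly as you indicate (its end-limits are constants in $SO(2)$, which stabilize each reducible $\alpha_j$). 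This abelian uniqueness is genuinely needed for the argument to close, so your version is, if anything, more careful than the published one; everything else matches the paper's proof.
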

  \begin{proof}

We first show that $\Phi$ is well-defined  on gauge equivalence classes.   Suppose that $A$ is a reducible  instanton on $(E,\alpha)$ and $g\in \calg^\delta(E,\alpha)$ a gauge transformation.  Then the decomposition 
$E=L_A\oplus \epsilon$ is sent to $E=g(L_A)\oplus g(\epsilon)$.  Hence $L_A$ and $g(L_A)$ are isomorphic bundles over $X_\infty$, and so $\Phi(A)=\Phi(g(A))$.  Thus $\Phi$ is well-defined on gauge equivalence classes.

Suppose that $A$ and $B$ are reducible instantons on $(E,\alpha)$ which satisfy $\Phi(A)=\Phi(B)$. Then, after reorienting $L_B$ and $\epsilon_B$ if needed, $e(L_A)=e(L_B)$.  Since $SO(2)$ bundles are determined up to isomorphism by their Euler class, there exists a bundle isomorphism $h:L_A\to L_B$.  Extending by the unique orthogonal orientation-preserving isomorphism $I:\epsilon_A\to\epsilon_B$  produces a bundle isomorphism
$g=h\oplus I:E=L_A\oplus \epsilon_A\to L_B\oplus \epsilon_B=E$.  Since the decompositions agree on the ends with the given decomposition $(E,\alpha)$, $\lim_{r\to \infty} g|_{\{r\}\times Y_j}$ exists and lies in the stabilizer of $\alpha_j$, so that $g\in \calg^{\delta}(E,\alpha)$.  Therefore, $A$ and $B$ represent the same point in $\calm(E,\alpha)$.

  Lemma \ref{support} shows that $\calm(L\oplus \epsilon,\beta\oplus \theta)$ is non-empty, and so 
if $C(e)=\{e\}$,  $\Phi$ is surjective.  
   \end{proof}

Determining the image of $\Phi$ can be tricky  when $Y$ is not a union of $\Z/2$ homology spheres.
Lemma \ref{support} shows that $e=e(L)$ itself is always in the image, and hence, as noted in Proposition \ref{reducibles},  if $C(e)$ consists of a single point, then $\Phi$ is a bijection.

\begin{theorem}  \label{CofL}  Let $(E,\alpha)=(L\oplus \epsilon, \beta\oplus \theta)$ be a reducible adapted bundle with Euler class $e=e(L)$ and let $\Phi:   \calm(E,\alpha)_{red}\to C(e) $ be the injective function defined above.
\begin{enumerate}
\item If    $H^2(X;\Z)$ splits orthogonally with respect to the intersection form as $H^2(X;\Z)=\spa(e)\oplus W$ with $\spa(e)\cong\Z$ and $W$ free abelian, then $C(e)$ consists of a single point and so $\Phi$ is a bijection. 
\item To each $e'\in C(e)$ one can assign  an obstruction in $H^1(Y;\Z/2)$ whose vanishing implies that $e'=\Phi([A]) $ has a solution $[A]\in\calm(E,\alpha)_{red}$. 
\item If each component of $Y$ is a $\Z/2$ homology sphere, then $\Phi$ is a bijection.
\end{enumerate}
\end{theorem}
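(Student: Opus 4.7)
The plan is to handle the three parts in sequence, with part (3) following quickly from part (2).

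For part (1), I would proceed by a direct calculation in $H^2(X;\Z)$. Write any $e' \in C(e)$ in the orthogonal decomposition as $e' = ne + w$ with $n \in \Z$ and $w \in W$. The hypothesis $e'\cdot e' = e\cdot e$ rewrites, using orthogonality, as $(1-n^2)(e\cdot e) = w\cdot w$. Since $X$ is negative definite and $\spa(e) \cong \Z$ has non-torsion generator, $e\cdot e < 0$; and the form restricted to $W$ is negative definite on the free abelian group $W$, giving $w\cdot w \le 0$ with equality iff $w = 0$. Hence $1-n^2 \le 0$, so $n \in \{-1,0,1\}$. The parity constraint $e'\equiv e \bmod 2$ eliminates $n=0$ by projecting onto $\spa(e)\cong\Z$, which would force $1 \in 2\Z$. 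Therefore $n=\pm 1$ and $w=0$, giving $e' = \pm e$ and $C(e) = \{e\}$. Combined with Lemma~\ref{support} and Proposition~\ref{reducibles}, this shows $\Phi$ is bijective.

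For part (2), given $e'\in C(e)$, I would attempt to construct a reducible instanton realizing $e'$ as follows. Let $L'\to X_\infty$ be the $SO(2)$ vector bundle with Euler class $e'$, form the $SO(3)$ bundle $E' = L'\oplus \epsilon$, and let $\alpha'=\beta'\oplus\theta$, where $\beta'$ is the (unique up to homotopy) flat connection on $L'|_Y$ provided by Lemma~\ref{unique}. By the Dold--Whitney classification and the fact that $X$ has non-empty boundary (so $SO(3)$ bundles over $X_\infty$ are determined by $w_2$), the condition $e\equiv e'\bmod 2$ implies $E\cong E'$ as $SO(3)$ bundles. If such an isomorphism can be chosen to further carry the boundary reduction $L'|_Y$ of $E'$ onto the reduction $L|_Y$ of $E$ on each component, then pulling back by the isomorphism and invoking Lemma~\ref{support} produces a reducible instanton $[A]\in\calm(E,\alpha)_{\mathrm{red}}$ with $\Phi([A])=e'$. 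The obstruction is therefore a compatibility question between two oriented $SO(2)$-reductions of $E|_{Y}$ whose Euler classes agree up to a sign componentwise. I expect to identify this obstruction as the difference class of the two reductions, viewed as sections of the associated $S^2$-bundle $E|_Y\times_{SO(3)}S^2\to Y$ with matched characteristic class data, and to show it lies naturally in $H^1(Y;\Z/2)$.

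Part (3) is then immediate: if each $Y_i$ is a $\Z/2$ homology sphere, then $H^1(Y;\Z/2)=\bigoplus_i H^1(Y_i;\Z/2)=0$, so the obstruction from part (2) vanishes for every $e'\in C(e)$, giving surjectivity of $\Phi$. Combined with the injectivity already established in Proposition~\ref{reducibles}, $\Phi$ is a bijection.

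The main obstacle will be the precise identification of the obstruction class in part (2). One must carefully disentangle the orientation ambiguity in $C(e)$ (quotiented by $\pm 1$ globally, but by $\pm 1$ componentwise for the restrictions $i_j^*$) from the subtler question of whether a boundary gauge transformation relating two $SO(2)$-reductions of $E|_Y$ extends over $X$. Setting this up cleanly requires analyzing the associated $S^2$-bundle and its sections modulo the boundary gauge action, and showing that the resulting obstruction is detected by, and naturally lives in, a class in $H^1(Y;\Z/2)$.
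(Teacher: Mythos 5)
Parts (1) and (3) of your proposal are fine: your computation in (1) is the paper's argument in a slightly different parametrization (the paper writes $e'=(1+2k)e+2w$ so that the parity condition is built in, whereas you derive oddness of $n$ afterwards by projecting to $\spa(e)$), and (3) follows from (2) exactly as you say. The problem is part (2), where your proposal stops at the point where the real work begins. You correctly reduce the question to extending a boundary gauge transformation $g\colon E|_Y\to E|_Y$ (carrying one $SO(2)$-reduction with its flat connection to the other) over all of $X$, and you correctly guess that the relevant obstruction should live in $H^1(Y;\Z/2)$; but you do not prove it. The primary obstruction a priori lives in $H^2(X,Y;\pi_1(SO(3)))=H^2(X,Y;\Z/2)$, and the reason it actually lies in the image of the injection $H^1(Y;\Z/2)\to H^2(X,Y;\Z/2)$ is not automatic: the paper deduces it from naturality of primary obstructions together with the fact that the bundle $P\times_{Ad}SO(3)$ of gauge transformations admits a global section (the identity), so the obstruction dies under $H^2(X,Y;\Z/2)\to H^2(X;\Z/2)$. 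Your $S^2$-bundle reformulation would need the analogous argument, which you acknowledge you have not supplied.

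More seriously, vanishing of the class in $H^1(Y;\Z/2)$ only extends $g$ over the 2-skeleton (and then the 3-skeleton for free, since $\pi_2(SO(3))=0$). There remains a top obstruction in $H^4(X,Y;\pi_3(SO(3)))\cong H^4(X,Y;\Z)$, and if this could be nonzero the statement of part (2) would be false as you have set it up: the obstruction would not be captured by $H^1(Y;\Z/2)$ alone. The paper kills this last obstruction by an argument that uses the hypothesis $e'\cdot e'=e\cdot e$ in an essential way: the restriction of $g$ to the boundary of a 4-ball $D\subset X$ clutches a bundle $V\to S^4$ whose Pontryagin number is computed via Chern--Weil theory to equal $p_1(A)-p_1(B)=e'\cdot e'-e\cdot e=0$, whence $V$ is trivial and $g|_{\partial D}$ extends over $D$. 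Notice that your proposal never invokes the condition $e'\cdot e'=e\cdot e$ anywhere in part (2), which is a symptom of this missing step. (A smaller point to attend to: on components where $i_j^*(e)$ has order 2, the stabilizer of $\alpha_j$ is $O(2)$ rather than connected, and one must allow composing the boundary gauge transformation with an element of the disconnected stabilizer before extending.)
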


\begin{proof}  

Suppose first that $H^2(X;\Z)$ splits orthogonally with respect to the intersection form as $H^2(X;\Z)=\spa(e)\oplus W$.  Then any class $e'$ satisfying $e'\equiv e$ mod 2 takes the form $e'= (1+2k)e + 2w$, with $w\in W$.  Hence $e'\cdot e'=(1 + 4k+4k^2)e\cdot e +4w\cdot w$.  If $e'\in C(e)$ then $e'\cdot e'=e \cdot e$, so that $4(k^2+k) e\cdot e + 4w\cdot w=0$.  Since $e\cdot e<0$ and $w\cdot w\leq 0$,  this is possible if and only if $k=0$ or $-1$ and $w=0$, so that $e'=\pm e$, as claimed.   

We turn now to the second assertion, which clearly implies the third assertion.

Given a class $e'\in C(e)$, let $L'\to X$ be an $SO(2)$ vector bundle with $e'=e(L')$.  Then since $e'\equiv e$ mod 2, the bundles $E=L\oplus \epsilon$ and $L'\oplus \epsilon'$ over $X$ are isomorphic.     Fix an identification $E=L\oplus \epsilon= L'\oplus \epsilon'$. 
Let $\beta_j$ be the unique flat connection on $L|_{Y_j}$,  $\beta'_j$ be the unique flat connection on $L'|_{Y_j}$. Then $\alpha_j=\beta_j \oplus \theta$ and $\alpha_j'=\beta_j'\oplus \theta$ are two flat connections on $E|_{Y_j}$.

If there exists a gauge transformation $g:E\to E$ so that $g|_{Y_j}^*(\alpha'_j)= \alpha_j$, then Lemma \ref{support} implies that there exists an instanton   $A'\in \calm(E,\alpha)$ inducing the splitting   $E=g(L')\oplus g(\epsilon')$, and hence $e'=\Phi({A'}) $ has the solution $A'$.  So we seek to construct such a gauge transformation $g$. We construct $g$ first on the boundary and the try to extend over the interior.

For each $j$, choose an isomorphism $h_j:L|_{Y_j}\to L'|_{Y_j}$ which preserves or reverses orientation according to whether $i_j^*(e')=i_j^*(e)$ or 
$i_j^*(e')=-i_j^*(e)$ (if $ 2i_j^*(e)=0$ choose $h_j$ to preserve orientation, for definiteness).  Then $h^*_j(\beta_j')$ and $\beta_j$ are flat connections on $L|_{Y_j}$ and so we may assume, by modifying $\beta_j'$ if needed, that  
$h^*_j(\beta_j')=\beta_j$. Let $g_j:E|_{Y_j}=L|_{Y_j}\oplus \epsilon\to E= L'|_{Y_j}\oplus \epsilon'$ be the unique orientation-preserving orthogonal extension   (so that on the trivial $\R$ factors, the orientation is preserved or reversed according to the sign).  By construction, $g_j^*(\alpha_j')=\alpha_j$.  
Let $g  =\sqcup_j g_j:E|_Y\to E|_Y$.

  The stabilizer of $\alpha_j$ is connected except for those $j$ for which $i_j^*(e)$ has order 2, in which case the stabilizer is isomorphic to $O(2)$. Thus the desired gauge transformation exists if and only if $g$ can be extended 
 to a gauge transformation over $X$, after perhaps composing $g_j$ with an appropriate 
gauge transformation $k_j$ for those path components such that $i_j^*(e)$ has order 2.

Let $P$ be the principal $SO(3)$ bundle associated to $E\to X$, so that gauge transformations are sections of $ P\times_{Ad}SO(3)$.    
Obstruction theory identifies the  primary obstruction   to extending $g$ to all of $ X$ as a class  $\psi \in H^2(X,Y;\pi_{1}(SO(3)))=H^2(X,Y;\Z/2)$. Naturality of primary obstructions, together with the fact that $ P\times_{Ad}SO(3)$ admits sections (e.g. the identity transformation) shows that $\psi$ is in $  \ker H^2(X,Y;\Z/2)\to H^2(X;\Z/2)$, hence $\psi$ lies in the  image of the injection $H^1(Y;\Z/2)\to H^2(X,Y;\Z/2)$.  Let $\vartheta\in H^2(Y;\Z/2)$ denote the unique element mapped to $\psi$.

When $\vartheta=0$, then $g$ extends over the 2-skeleton of $X$.  It further extends further to the  3-skeleton,  because
$\pi_2(SO(3))=0$.  The last obstruction to extending $g$ over $X$ lies in $H^4(X,Y;\pi_3(SO(3)))=H^4(X,Y;\Z)$; the coefficients are untwisted since our standing assumption that $H^1(X;\Z/2)=0$ implies that $\pi_1(X)$ admits no non-trivial homomorphisms to Aut$(\Z)=\Z/2$.   This class is the obstruction to extending $g$  over the top cell of $X/Y$.
   
Suppose that $D \subset X$ is a fixed 4-ball, and that $g:E|_{X\setminus D}\to E|_{X\setminus D}$ is  the gauge transformation which we wish to extend to all of $E$.   Choose a connection  $A\in \cala^\delta(L\oplus \epsilon,\beta_L\oplus \theta)$.  The connection $g^*(A|_{X\setminus D})$  extends to a connection $ B\in \cala^\delta(L'\oplus \epsilon', \beta_{L'}\oplus \theta')$.    

Since $\Tr(F(g^*(A))\wedge F(g^*(A)))=\Tr(F(A)\wedge F(A))$ pointwise (on $X\setminus D$), it follows that 
  \begin{equation}
\label{eq2.3}
p_1(A)-p_1(B)=\tfrac{1}{8\pi^2}\int_{D}\Tr(F(A)\wedge F(A))-\tfrac{1}{8\pi^2}\int_{D}\Tr(F(B)\wedge F(B)),
\end{equation}
The condition $e'\cdot e'=e\cdot e$ implies that 
 $p_1(A)=p_1(B)$, and hence the left side of Equation (\ref{eq2.3})  is zero. 
 
 But  Chern-Weil theory implies that  the right side of Equation (\ref{eq2.3}) equals $p_1(V)$, where $V$ is the 
 bundle over $S^4$ obtained by clutching two copies of $E|_D$ using $g|_{\partial D}$.  Since $p_1(V)=0$,  the bundle $V$ is trivial, and   thus  the restriction of $g$ to $\partial  D$  extends over $D$, giving the desired gauge transformation $g$. 
 
 Thus we have seen that there is a class $\vartheta\in H^1(Y;\Z/2)$ whose vanishing guarantees that $g$ extends over $X$.
 \end{proof}

 \subsection{The argument of Furuta}
 
 With all the necessary machinery in place, we can now describe our variant of  the argument of Furuta. 
 Theorem \ref{furfinster} provides the mechanism to establish that certain  definite 4-manifolds cannot exist. Furuta used this argument to prove the linear independence of the Seifert fibered homology spheres $\{\Sigma(p,q,pqk-1)\}_{k=1}^\infty$ in $\Theta^3_H$.  We will give further applications below.

Recall from Section \ref{reduciblesect} that if $X$ is a negative definite four-manifold satisfying $H^1(X;\Z/2)=0$, and  $\partial X$ consists of a union of rational homology spheres, then  any   $SO(2)$ bundle $L$ determines a unique adapted $SO(3)$ 
 bundle $(E,\alpha)=(L\oplus \epsilon,\beta\oplus \theta)$. 
Letting $e(L)$ denote the Euler class of $L$,   Proposition \ref{lem2.4} shows that   $p_1(E,\alpha)=-e(L)\cdot e(L)\in \tfrac{1}{d}\Z$, where $d$ is an integer which annihilates $H_1(Y_i;\Z)$ for each $i$.  Finally, recall the invariant  $\hat\tau(Y,\alpha)\in(0,4]$ of Definition  \ref{defoftau}.

 \begin{theorem}\label{furfinster}  Let $X$ be a negative definite 4-manifold with $H^1(X;\Z/2)=0$ whose boundary consists of a union of rational homology spheres.   Let  $L$ be an $SO(2)$ bundle over $X_\infty$ with Euler class $e=e(L)$,  and let $(E,\alpha)=(L\oplus \epsilon,\beta\oplus \theta)$ be the corresponding adapted bundle.  Assume that 
 \begin{enumerate}
 \item Each $\alpha_i=\beta_i\oplus \theta$ is non-degenerate,
\item $\Ind^+(E,\alpha)\ge 0$, where 
$$ \Ind^+(E,\alpha)=-2e\cdot e-3 +\tfrac{1}{2}\sum_{\beta_i \text{\tiny \rm nontrivial}}(3-h_{\alpha_i}-\rho(Y_i,\alpha_i)).$$
\item  $0<-e\cdot e<\hat\tau(Y,\alpha)\leq4$.
\end{enumerate}

 Then the number of gauge equivalence classes of reducible instantons on $(E,\alpha)$  is even, greater than 1, and $\Ind^+(E,\alpha)$ is odd.  \end{theorem}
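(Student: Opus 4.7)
The plan is to apply a Donaldson-style cobordism argument to the moduli space $\calm(E,\alpha)$, using a characteristic number of the base point fibration $\cale \to \calm(E,\alpha)^*$. Hypothesis (3) combined with the non-degeneracy of each $\alpha_i$ lets me invoke Proposition \ref{compactness} to conclude that $\calm(E,\alpha)$ is compact. Hypothesis (2), together with $p_1(E,\alpha) = -e\cdot e > 0$, ensures that after a generic perturbation of the Riemannian metric on $X$ (permissible by \cite{MMR}), the irreducible stratum $\calm(E,\alpha)^*$ is a smooth manifold of dimension $\Ind^+(E,\alpha)$. By Lemma \ref{support}, the adapted bundle $(E,\alpha) = (L \oplus \epsilon, \beta \oplus \theta)$ supports at least one reducible instanton, so $\calm(E,\alpha)_{red}$ is nonempty.

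Next I analyze the local structure near reducibles. By the standing assumptions ($(E,\alpha)$ non-trivial and $H^1(X;\Z/2)=0$), every reducible has stabilizer $\Gamma_A \cong SO(2)$, and as discussed in Section \ref{reduciblesect} a neighborhood of $[A]$ in $\calm(E,\alpha)$ is modeled on $V/SO(2)$ for a complex vector space $V$ of real dimension $\Ind^+(E,\alpha) + 1$; hence the link of each reducible is $\C P^n$ with $2n+1 = \Ind^+(E,\alpha)$. This forces $\Ind^+(E,\alpha)$ to be odd. In particular, the reducibles are isolated cone points of $\calm(E,\alpha)$, and by compactness they are finite in number, say $r \ge 1$.

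Excising small open cone neighborhoods of each reducible yields a compact smooth manifold $N$ with $\partial N = \sqcup_{i=1}^{r} \C P^n$. The base point fibration $\cale$ extends as an $SO(3)$ bundle over all of $N$, and by the property cited from \cite[Section 9]{FSpseudo}, $w_2(\cale)$ restricts to the generator of $H^2(\C P^n;\Z/2)$ on each link. Evaluating the class $w_2(\cale)^n \in H^{2n}(N;\Z/2)$ on the fundamental class of the boundary gives
\begin{equation*}
\langle w_2(\cale)^n, [\partial N] \rangle = \sum_{i=1}^{r} \langle w_2(\cale|_{\C P^n})^n, [\C P^n] \rangle = r \pmod{2}.
\end{equation*}
Since $w_2(\cale)^n$ extends as a cohomology class over $N$, this evaluation must vanish; hence $r$ is even. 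Combined with $r \ge 1$ from Lemma \ref{support}, we obtain $r \ge 2$.

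The main obstacle is not conceptual but technical: verifying the advertised cone structure and the smooth manifold-with-boundary structure of $N$ rigorously in the cylindrical-end $L^{p,\delta}$ setting. The slice theorem \cite[Theorem 4.13]{donald1} and the transversality/perturbation results of \cite{MMR} provide the needed framework, but one must track the non-degeneracy of the $\alpha_i$ throughout to guarantee that the relevant linearized operators remain Fredholm, that the perturbations do not alter the flat limits, and that $\calm(E,\alpha)^*$ is genuinely cut out transversely away from reducibles.
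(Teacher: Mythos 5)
Your proposal is correct and follows essentially the same route as the paper: compactness via Proposition \ref{compactness}, smoothness of $\calm(E,\alpha)^*$ from the index hypothesis and a generic metric, the cone-on-$\C P^n$ local model at reducibles (giving oddness of $\Ind^+(E,\alpha)$), and the parity argument via the non-vanishing of $w_2(\cale)^n$ on each link of the truncated moduli space. The only cosmetic difference is that you cite Lemma \ref{support} directly for non-emptiness of $\calm(E,\alpha)_{red}$, whereas the paper routes this through Proposition \ref{reducibles}; these amount to the same thing.
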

 \begin{proof}  The irreducible moduli space
 $\calm(E,\alpha)^*$ is a smooth oriented manifold of dimension $\Ind^+(E,\alpha)$.    Let $k$ denote the number of singular points in the full moduli space.  Proposition   \ref{reducibles} identifies  $k$ with the image of $\Phi:\calm(E,\alpha)_{red}\to C(L)$, and shows that $k\ge 1$. Each singular point $A\in \calm(E,\alpha)_{red}$ has a neighborhood homeomorphic to a cone on $\C P^n$ for some $n$, and so $2n+1=\Ind^+(E,\alpha)$.   Proposition  \ref{compactness} implies that $ \calm(E,\alpha)$ is compact.   
 
Removing cone neighborhoods of the $k$ singular points yields a $2n+1$ dimensional compact manifold $\calm^0$,  whose boundary consists of a disjoint union of $k$ copies of $\C P^n$ (perhaps with differing orientations).  This manifold is equipped with an $SO(3)$ bundle $\mathcal{E}\to \calm^0$ that restricts to a bundle on each $\C P^n$ with non-trivial second Stiefel-Whitney class. Since $H^*(\C P^n;\Z/2)$ is a polynomial ring on $w_2$, $0\ne w_2(\mathcal{E}|_{\C P^n})^{n}$ for each boundary component. But since $(\sqcup_{i=1}^k \C P^n, \mathcal{E})$ extends to $(\calm^0, \mathcal{E})$, it follows  that $k$ is even, as this is the only way for the Stiefel-Whitney numbers of  $(\sqcup_{i=1}^k \C P^n, \mathcal{E})$ to vanish.
\end{proof}

 \subsection{Computing Atiyah-Patodi-Singer  $\rho$ invariants and Chern-Simons invariants of flat  SO(2)  connections on rational homology spheres}\label{rhoinvts}
 
Before we turn to applications and examples we discuss the use of flat cobordisms to compute the  $\rho$ invariants and Chern-Simons invariants which appear in the expression  for $\Ind^+(E,\alpha)$  and the definition of $\hat\tau(Y,\alpha)$.
 
 \medskip
 
The Atiyah-Patodi-Singer $\rho$ invariants of flat $SO(3)$ connections on 3-manifolds can be difficult to compute. However, in Theorem \ref{furfinster} we require $Y$ to be a union of rational homology spheres and the flat connection $\alpha$ to be reducible;  in fact,  to take the form $\alpha=\beta\oplus \theta$ for $\theta$ the trivial connection. Thus, what is required in applications of this theorem is a calculation of  $\rho$ invariants of  such reducible  flat $SO(3)$ connections on rational homology spheres.  This is an easier task, and can often be accomplished as follows.  

Since $\rho(Y,\alpha)$ depends only on the gauge equivalence class of $\alpha$, and taking the holonomy induces a 1-1 correspondence from  gauge equivalence classes of flat connections to conjugacy classes of  homomorphisms $\pi_1(Y)\to SO(3)$, we henceforth use the notation  $\alpha$ for both a flat connection and its holonomy.  

Then $\alpha:\pi_1(Y)\to SO(3)$ is reducible if  it is   conjugate to a   representation $\beta\oplus 1:\pi_1(Y)\to SO(3)$ with image in 
$$\begin{pmatrix}
SO(2)&0\\0&1 
\end{pmatrix} \subset SO(3).$$
Complexification preserves eigenvalues, and hence $\rho(Y,\alpha)$ is  unchanged by   including $SO(3)$ in $SU(3)$.  The  complexification of $\beta\oplus 1$ is conjugate to  the diagonal representation $\beta_\C\oplus \bar\beta_\C\oplus 1$ where $\beta_\C:\pi_1(Y)\to U(1)$ is just $\beta$ viewed via the identification $SO(2)=U(1)$.   Because $\rho(Y,\theta)=\eta(Y,\theta)-\eta(Y,\theta)=0$,   
 \begin{equation}\label{eq2.9}
\rho(Y,\alpha)=\rho(Y,\beta_\C)+   \rho(Y,\bar\beta_\C).
\end{equation}
 Therefore the task is reduced to computing $\rho(Y,\gamma)$ for 1-dimensional unitary  representations $\gamma$.

A successful strategy to compute $\rho(Y,\gamma)$ for $\gamma:\pi_1(Y)\to U(1)$ is to find
a cobordism $W$ from $Y$ to a lens space (or union of lens spaces) $L(Y)$ over which $\gamma$  extends. Finding such a cobordism is a familiar problem; see for example \cite{casson-gordon,KKR}.  The key idea is that  finding such a $W$ is equivalent to showing $(Y,\gamma)$ vanishes in an appropriate oriented bordism group over $\R/\Z$ or $\Z$.  This can be addressed with straightforward   methods.

Such a  $W$ and extension of $\gamma$ provides  a {\em flat} cobordism from $Y$ to $L(Y)$. Applying the Atiyah-Patodi-Singer  signature theorem \cite{APS1, APS2} one obtains:
$$ \rho(Y, \gamma)-\rho(L(Y),\gamma)=\text{Sign}_{\gamma}(W)- \text{Sign}  (W).$$
By this approach, computing the $\rho$ invariants  which arise in Theorem \ref{furfinster}    reduces to computing $U(1)$ signatures  of $W$ and computing  $U(1)$ $\rho$ invariants of lens spaces.

\medskip 

The calculation  of  $\rho(L(a,b),\gamma)$ for a lens space $L(a,b)$ and a $U(1)$ representation $\gamma:\pi_1(L(a,b))\to U(1)$ 
  has been carried out in   \cite{donelly, APS2}.    We state the result in the form we need.   Let $a$ be a positive integer and $0<b<a$ an integer relatively prime to $a$.  Denote by $L(a,b)$ the oriented lens space obtained by $-a/b$ surgery on the unknot $U$.  
The meridian $\mu$ of the unknot generates $\pi_1(S^3\setminus U) $, and hence also $\pi_1(L(a,b))=H_1(L(a,b))=\Z/a$.    

\begin{prop} \label{prop2.13}  Let $\beta:\pi_1(L(a,b))\to SO(2)$ be the representation sending the generator $\mu$ to the rotation of angle $\frac{2\pi  \ell}{a}$  for some integer $\ell$.
Then 
$$\rho(L(a,b),\beta)  =\tfrac{4}{a}\sum_{k=1}^{a-1}\cot(\tfrac{ \pi k}{a})\cot(\tfrac{ \pi k b}{a})  \sin^2(\tfrac {\pi k\ell}{a}). $$
If $\alpha=\beta\oplus \theta:\pi_1(L(a,b))\to SO(3)$, then $\rho(L(a,b),\alpha)=\rho(L(a,b),\beta)$.
  \end{prop}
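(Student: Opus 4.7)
The plan is to reduce the $SO(2)$ computation to one-dimensional unitary representations and then invoke the classical Atiyah-Patodi-Singer/Donnelly calculation for eta invariants of lens spaces. First I would complexify $\beta$: as a two-dimensional real representation, it splits after complexification as $\beta_\C\oplus\bar\beta_\C$, where $\beta_\C:\pi_1(L(a,b))\to U(1)$ sends $\mu$ to $e^{2\pi i\ell/a}$. Since $\eta$ is additive under direct sums of flat bundles and $\rho$ is defined by subtracting $\dim(V)\cdot\eta(L(a,b))$, we obtain
$$\rho(L(a,b),\beta)=\rho(L(a,b),\beta_\C)+\rho(L(a,b),\bar\beta_\C),$$
which is \eqref{eq2.9} specialized to this situation. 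The final sentence of the proposition follows immediately: $\alpha=\beta\oplus\theta$ complexifies to $\beta_\C\oplus\bar\beta_\C\oplus\theta_\C$, and the extra trivial line contributes zero to $\rho$.

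Second, I would carry out the $U(1)$ calculation by realizing $L(a,b)$ as the free quotient $S^3/(\Z/a)$ under the action $(z_1,z_2)\mapsto(\zeta z_1,\zeta^b z_2)$ with $\zeta=e^{2\pi i/a}$, and extending this action to the unit ball $B^4\subset\C^2$, where it has a single fixed point at the origin. The Atiyah-Singer $G$-signature theorem gives the equivariant signature contribution at this fixed point from the two rotation angles $2\pi k/a$ and $2\pi kb/a$ of $\zeta^k$:
$$\Sign_{\zeta^k}(B^4)=-\cot(\pi k/a)\cot(\pi kb/a).$$
Combining this with the Atiyah-Patodi-Singer signature theorem applied to $B^4/(\Z/a)$ (which bounds $L(a,b)$, the orbifold point being accounted for via the $G$-signature contribution) and extracting the $\gamma$-isotypic component via character averaging over $\Z/a$ yields the classical formula (see \cite{APS2,donelly})
$$\rho(L(a,b),\gamma)=\frac{-1}{a}\sum_{k=1}^{a-1}\cot(\pi k/a)\cot(\pi kb/a)\bigl(e^{-2\pi ikm/a}-1\bigr)$$
for a one-dimensional unitary representation $\gamma:\mu\mapsto e^{2\pi im/a}$.

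Third, I would apply this formula with $m=\ell$ for $\beta_\C$ and $m=-\ell$ for $\bar\beta_\C$ and add. The imaginary parts cancel under the substitution $k\leftrightarrow a-k$, since $\cot(\pi k/a)\cot(\pi kb/a)$ is invariant under $k\mapsto a-k$ while the two exponentials are interchanged by complex conjugation. The real parts combine via the identity $e^{2\pi ik\ell/a}+e^{-2\pi ik\ell/a}-2=-4\sin^2(\pi k\ell/a)$, producing the claimed
$$\rho(L(a,b),\beta)=\frac{4}{a}\sum_{k=1}^{a-1}\cot(\pi k/a)\cot(\pi kb/a)\sin^2(\pi k\ell/a).$$

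The main obstacle I anticipate is not conceptual but bookkeeping: one must reconcile orientation and sign conventions among the Atiyah-Singer $G$-signature formula, the APS boundary correction, the orientation of $L(a,b)$ as $-a/b$ surgery on the unknot, and the sign convention for $\rho$ fixed earlier in the paper. Since the underlying formula is classical, the heart of the proof is simply verifying that these signs agree to yield the stated expression.
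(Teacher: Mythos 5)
Your proposal is correct and follows essentially the same route as the paper: complexify $\beta$ to $\beta_\C\oplus\bar\beta_\C$ via Equation (\ref{eq2.9}), invoke the classical lens-space computation for one-dimensional unitary representations on $S^3/(\Z/a)$, and combine the conjugate terms using $e^{i\theta}+e^{-i\theta}-2=-4\sin^2(\theta/2)$, with the last assertion following from additivity of $\rho$ and $\rho(Z,\theta)=0$. The only (immaterial) difference is that the paper directly quotes Donnelly's Proposition 4.1 for $\eta(L(a,b),\beta_\C)$ rather than re-deriving the classical formula from the $G$-signature theorem as you sketch.
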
 
 \begin{proof}
Equation  (\ref{eq2.9}) shows that  
$$\rho(L(a,b),\beta)=\rho(L(a,b),\beta_\C)+\rho(L(a,b),\bar\beta_\C)$$
where $\beta_\C:\pi_1(L(a,b))\to U(1)$ is given by $\beta_\C(\mu)=e^{2\pi i\ell/a}$.

The lens space $L(a,b)$ is orientation-preserving diffeomorphic to the quotient of $S^3$ by the free action $g\cdot(z,w)=(e^{2\pi i/a}z, e^{2\pi i b/a}w)$ for $(z,w)\in S^3\subset \C^2$.  This diffeomorphism identifies the homotopy class of $\mu$ with the generator $g$ 
of the deck transformations.     Proposition 4.1 of \cite{donelly} calculates the corresponding $\eta$ invariant; the formula is
$$\eta(L(a,b),\beta_\C)=-\tfrac{1}{a}\sum_{k=1}^{a-1}\cot(\tfrac{ \pi k}{a})\cot(\tfrac{ \pi k b}{a})e^{2\pi i k\ell/a}.$$
Since $\rho(L(a,b),\beta_\C)+\rho(L(a,b),\bar\beta_\C)=\eta(L(a,b),\beta_\C)+\eta(L(a,b),\bar\beta_\C)-2\eta(L(a,b),\theta)$,
 \begin{eqnarray*}
\rho(L(a,b),\beta)&=&-\tfrac{1}{a}\sum_{k=1}^{a-1}\cot(\tfrac{ \pi k}{a})\cot(\tfrac{ \pi k b}{a})(e^{2\pi i k\ell/a}+e^{-2\pi i k\ell/a}-2)\\
&=&-\tfrac{2}{a}\sum_{k=1}^{a-1}\cot(\tfrac{ \pi k}{a})\cot(\tfrac{ \pi k b}{a})(\cos(\tfrac {2\pi k\ell}{a})-1)\\
&=& \tfrac{4}{a}\sum_{k=1}^{a-1}\cot(\tfrac{ \pi k}{a})\cot(\tfrac{ \pi k b}{a})\sin^2(\tfrac {\pi k\ell}{a}). 
\end{eqnarray*}
The last assertion follows from the facts that $\rho$ invariants add with respect to the direct sum of representations and that $\rho(Z,\theta)=\eta(Z,\theta)-\eta(Z,\theta)=0$ for any $Z$.
\end{proof}

The formula of  Neumann-Zagier  \cite{NZ} 
\begin{equation}\label{NZformula}\begin{split}\tfrac{2}{a}\sum_{k=1}^{a-1}\cot(\tfrac{\pi  kc}{a})\cot(\tfrac{ \pi k  }{a})  \sin^2(\tfrac {\pi k  }{a})& = \tfrac{2c^*}{a}-1 \\   \text{ when } (a,c)=1   \text{ and } 0<c^*<a &\text{ satisfies }   cc^*\equiv -1 \text{ mod } a,
\end{split}\end{equation}  
gives an explicit method for computing the $\rho$ invariants which appear in Proposition \ref{prop2.13}
 in the case when $\ell=b$ (by reindexing the sum).    In the general case the extensions of this calculation due to Lawson 
 \cite{lawson,lawson2} are useful.

 \medskip

The strategy of using a flat cobordism also plays an important role in computing Chern-Simons invariants and hence the invariant $\hat\tau(Y,\alpha)$.   This is because if $W$ is a flat cobordism from $(Y,\alpha)$ to $(Y',\alpha')$, then $\cs(Y,\alpha)=\cs(Y',\alpha')$ mod $\Z$.

 For {\em reducible} $\alpha$  the  flat cobordisms as described above permits one to express  Chern-Simons invariants in terms of Chern-Simons invariants of lens spaces.  Indeed, 
 one can quickly obtain estimates for Chern-Simons invariants of lens spaces (or, more generally, spherical space forms) as follows. First note that $L(a,b)$ has universal cover $S^3$, and every flat $SO(3)$ connection on $S^3$ is gauge equivalent to the trivial connection, which has Chern-Simons invariant a multiple of 4. Integrating the Chern-Simons form associated to the pullback of any flat connection over a fundamental domain shows that $${\tau}( L(a,b),\alpha )=\frac{4k}{a}\ge \frac{4}{a}.$$  
 
 Computing or estimating Chern-Simons invariants for irreducible flat $SO(3)$ connections, however, requires  more sophisticated techniques, such as those introduced in \cite{KK1, KK2,KKR, auckly}.

\section{Seifert fibered examples}\label{FSexample}

Propositions  \ref{APSindex}  and    \ref{prop2.13} allow us to identify 
 $\Ind^+(E,\alpha)$ with the invariant  $R(a_1,\cdots, a_n)$ of Fintushel-Stern \cite{FS}  for $X$ the truncated mapping cylinder of the  Seifert fibration of the Seifert fibered homology sphere $\Sigma(a_1,\cdots,a_n)$. We explain this and give their proof that when $R(a_1,\cdots, a_n)>0$,  $\Sigma(a_1,\cdots,a_n)$ does not bound a positive definite 4-manifold.

Fix an $n$-tuple $(a_1,b_1),\cdots, (a_n,b_n)$ of  relatively prime pairs of integers  satisfying $(a_1\cdots a_n)\sum_{i=1}^n\tfrac{b_i}{a_i}=1$ with the $a_i$ positive.  These uniquely define an oriented 
Seifert fibered homology sphere $\Sigma(a_1,\cdots,a_n)$ whose Kirby diagram is illustrated in Figure \ref{fig1}.
  
  Starting with the mapping cylinder $C$ of the Seifert fibration $\Sigma\to S^2$, remove  neighborhoods of the $n$ singular points of $C$ leaving a 4-manifold $X$. These neighborhoods are homeomorphic to cones on lens spaces $L(a_i,b_i)$.    Thus $X$ is a manifold with boundary, and this boundary consists of the disjoint union of $\Sigma$ and the lens spaces $L(a_i,b_i)$.    For convenience  we denote $Q=\sqcup_i L(a_i,b_i)$, so that $\partial  X=Q\sqcup \Sigma $.

 The 4-dimensional Kirby diagram for $X$ is also given by Figure \ref{fig1}.  More precisely, this diagram can  be thought of as a cobordism  from a connected sum of lens spaces $-L(a_i,b_i)$ to $\Sigma$, obtained by adding a single $0$-framed 2-handle.  Then $X$ is obtained by gluing this cobordism   to a  cobordism 
 from the disjoint union of lens spaces to the connected sum of lens spaces obtained by attaching   $(n-1)$ $1$-handles to form the connected sum.  
 Thus $X$ is  obtained from $I\times (-Q)$ by adding $(n-1)$ 1-handles and one 2-handle; alternatively $X$ is obtained from $I\times \Sigma$ by adding one 2-handle (a 0-framed meridian to the 0-framed circle indicated in Figure \ref{fig1}) and $(n-1)$ 3-handles along 2-spheres separating the components. Therefore $H_1(X)=0$ and $H_2(X)\cong \Z$.

\begin{figure}
\psfrag{a1}{$ \frac{a_1}{b_1}$}
\psfrag{a2}{$ \frac{a_2}{b_2}$}
\psfrag{a3}{$ \frac{a_3}{b_3}$}
\psfrag{a4}{$ \frac{a_n}{b_n}$}
\psfrag{0}{$0$}
\begin{center}
 \includegraphics [height=90pt]{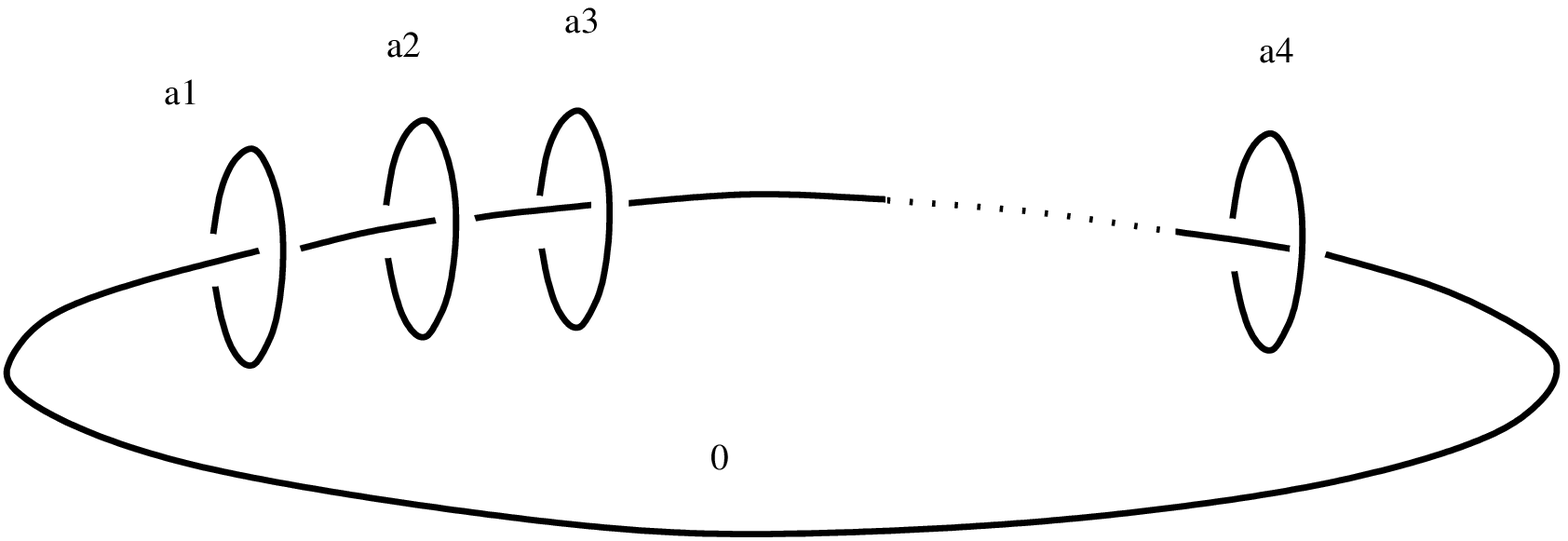}
\caption{}\label{fig1} 
\end{center}
\end{figure}

Consider the surface  obtained  from the union of the punctured disk bounded by the $0$-framed component together with the core of the 2-handle. Pushing the interior of this surface slightly into the interior of $X$ gives a properly embedded $n$-punctured sphere $F\subset X$ representing a generator $[F,\partial F]\in H_2(X,Q)=H_2(X,\partial X)\cong\Z$.  Orient $X$ so that it is negative definite. With this orientation, $\partial X=\Sigma\sqcup_i  L(a_i,b_i)$ (recall that as an oriented 3-manifold $L(p,q)$ is defined to be $-\frac{p}{q}$ Dehn surgery on the unknot).

 Let $L\to X$ be the $SO(2)$ bundle whose Euler class $e(L)\subset H^2(X)$ equals the 
Poincar\'e dual to $F$, and $(E,\alpha)=(L\oplus \epsilon,\beta\oplus \theta)$ the corresponding adapted bundle.   We compute the terms that appear in the formula for $\Ind^+(E,\alpha)$  given by Proposition \ref{APSindex}. 

  The product $a=a_1\cdots a_n$ annihilates $H_1(\partial X)$.  
The boundary components of $F$ are meridian curves representing the generators $\mu_i\in H_1(L(a_i,b_i))$. 
A straightforward geometric argument using Figure \ref{fig1} shows that 
$$F\cdot aF= - (a_1\cdots a_n)\sum_{i=1}^n\tfrac{b_i}{a_i}=-1$$
 Thus
$$p_1(E,\alpha)=-e(L)\cdot e(L)=- \tfrac{1}{a}(F\cdot aF)= \tfrac{1}{a}.$$

 Next, note that the flat $SO(2)$ connection on $\Sigma$ is trivial since $\Sigma$ is a integer homology sphere, or, equivalently, $F$ misses $\Sigma$.

 This leaves the contributions from the $L(a_i,b_i)$.   We use Proposition \ref{prop2.13}  to compute  the terms $ \rho(L(a_i,b_i),\alpha_i)$.   The representation $\alpha_i=\beta_i\oplus \theta$ sends $\mu_i$ to $e^{2\pi i \ell_i/a_i}$ where, according to Lemma \ref{bock},  the  integers $\ell_i$ are determined  by $\ell_i/a_i=\lk(\mu_i, \mu_i)$. It follows that $\ell_i=-b_i$ and so $\beta_i$ is non-trivial.  Hence $H^0(L(a_i, b_i);\R^2_{\beta_i})=0=H^1(L(a_i, b_i);\R^2_{\beta_i})$ and so
 $$h_{\alpha_i}=h^0_{\alpha_i}+ h^1_{\alpha_i}=h^0_{\beta_i}+h^0_{\theta} + h^1_{\beta_i}+h^1_{\theta}=0+1+0+0=1.$$ 
 
Therefore, 
 $$\rho(L(a_i,b_i),\alpha_i)=\tfrac{4}{a_i} 
\sum_{k=1}^{a_i-1}\cot(\tfrac{\pi k}{a_i})\cot(\tfrac{ \pi k (-b_i)}{a_i})  \sin^2(\tfrac {\pi k (-b_i)}{a_i}).
 $$

Thus Proposition \ref{APSindex}  implies that \begin{equation}
\label{R(e)} 
\Ind^+(E,\alpha)=\tfrac{2}{a}-3 + n+  \sum_{i=1}^n\tfrac{2}{a_i} 
\sum_{k=1}^{a_i-1}\cot(\tfrac{\pi k}{a_i})\cot(\tfrac{ \pi k b_i}{a_i})  \sin^2(\tfrac {\pi k b_i}{a_i}).
\end{equation} This agrees with  the formula of Fintushel-Stern \cite{FSpseudo} for their invariant $R(a_1, \cdots, a_n)$, an equivariant index computed using the Atiyah-Segal $G$-index theorem \cite{atiyah-segal} rather than the Atiyah-Patodi-Singer theorem as done here.      Indeed, their formula is obtained from this one by making the summation index substitution $r= k b_i$ in the inner sum, and using the fact that  $  a\sum_{j=1}^n\tfrac{b_j}{a_j}= 1$  to express the inverse of $b_i$ mod $a_i$ as $b_i^{-1}=\frac{a}{a_i}$. 
 The Neumann-Zagier formula (\ref{NZformula}) can then be applied to the summands in (\ref{R(e)}) to  explicitly compute $\Ind^+(E,\alpha)$ in terms of the Seifert invariants $a_1,\cdots, a_n$.   After some simplification one obtains
$$\Ind^+(E,\alpha)   = 2n-3 -2\sum_{i=1}^n K_i 
$$ 
where $K_i$ are integers satisfying
$0<b_i+K_ia_i<a_i$.

 \medskip

Assume  that  for the choice of $L$  whose Euler class   $e=e(L)$ is Poincar\'e dual to $F$,  $\Ind^+(L\oplus \epsilon,\alpha)>0$.   
Assume further that $\Sigma=\Sigma(a_1,\cdots, a_n)$ bounds a  positive definite manifold  $B$ with  $H_1(B;\Z/2)=0$. 

Extend $E$ over the union of $N=X\cup_{\Sigma}(-B)$ by taking the trivial bundle over $-B$. In this way we obtain a bundle over $N$ which we denote $E_N$.   The remark  after Proposition  \ref{APSindex}  shows that $\Ind^+(E_N,\alpha)=\Ind^+(L\oplus \epsilon,\alpha)>0$. 
Since   
${\tau}( L(a_i,b_i),\alpha)\ge \frac{1}{a_i}$, it follows that $\hat\tau(\partial N)\ge \min\{\frac{1}{a_i}\}>\tfrac{1}{a}=p_1(E_N,\alpha)$, and hence $\calm(E_N,\alpha)$ is compact.

Since $\Sigma$ is a homology sphere, $H^2(N)$ splits orthogonally with respect to the intersection form as $H^2(X)\oplus H^2(-B)=\spa(e)\oplus W$ with $W$ free abelian.  Thus Theorem \ref{CofL} implies that 
  $C(e)$ contains a single point, contradicting
Theorem \ref{furfinster}.  

This shows that   if $\Ind^+(E,\alpha)>0$,  then $ \Sigma(a_1,\cdots, a_n)$ cannot bound  any    positive definite manifold  $B$ with $H_1(B;\Z/2)=0$. This   recovers the original result of Fintushel-Stern \cite{FSpseudo} using cylindrical end moduli spaces in place of orbifold moduli spaces.

\medskip

Notice that the argument did not use any information about the $0$-framed component in Figure \ref{fig1}. In particular, we can replace this unknot by {\em any knot}  (and the punctured disk which makes up $F$ by a punctured Seifert surface for $K$) to get the following slight generalization of the Fintushel-Stern result:

\begin{theorem} \label{small}  Let $ a_1,\cdots , a_n$ be  a set of relatively prime integers which satisfy $R(a_1,\cdots, a_n)>0$.

 Let $K$ be any knot in $S^3$, and    let $\Sigma$ be the homology 3-sphere obtained from  $ \frac{a_i}{b_i}$ Dehn surgeries on $n$ parallel copies of the meridian of $K$ and $0$-surgery on $K$, where the $b_i$ are integers chosen so that $(a_1\cdots a_n)\sum_i\tfrac{b_i}{a_i}=1$.   Then $\Sigma$ does not bound a positive definite   4-manifold $B$ with  $H_1(B;\Z/2)=0$.\qed
\end{theorem}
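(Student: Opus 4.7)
The plan is to transcribe the Fintushel--Stern argument from the preceding paragraphs, replacing the unknot in Figure \ref{fig1} by $K$ and the bounding disk by a Seifert surface for $K$. The essential observation is that every quantity entering the computation is governed by the framings in the Kirby diagram and the linking relations, not by the knot type of the $0$-framed component.

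First, construct $X$ as the cobordism from $Q=\sqcup_i L(a_i,b_i)$ to $\Sigma$ obtained from $I\times Q$ by attaching $n-1$ $1$-handles that join the lens spaces, followed by a single $0$-framed $2$-handle along $K$ sitting inside the resulting $\#_i L(a_i,b_i)$. Dually, $X=(I\times\Sigma)\cup(\text{2-handle})\cup((n-1)\text{ 3-handles})$; combined with the fact that $\Sigma$ is an integer homology sphere (forced by the arithmetic condition $(a_1\cdots a_n)\sum b_i/a_i=1$ independently of the knot type), this shows $H_1(X)=0$ and $H_2(X)\cong\Z$. Let $F\subset X$ be a Seifert surface of $K$ with its $n$ transverse intersection points with the meridians $\mu_i$ opened into punctures and with its $K$-boundary capped by the core of the $2$-handle, so that $[F,\partial F]$ generates $H_2(X,\partial X)$. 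Orient $X$ to be negative definite, let $L$ be the $SO(2)$ bundle with Euler class $e=\operatorname{PD}[F]$, and set $(E,\alpha)=(L\oplus\epsilon,\beta\oplus\theta)$.

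The key local calculation is $F\cdot aF=-1$: because the Seifert framing of $K$ coincides with its $0$-framing, the push-off of $F$ using the Seifert framing meets $F$ algebraically zero times in the interior regardless of $g(K)$, while the boundary contributions at the meridians are the same rational quantities $-b_i/a_i$ as in the unknot case. Hence $p_1(E,\alpha)=1/a$. By Lemma \ref{bock}, each flat $SO(2)$ connection $\beta_i$ on the end $L(a_i,b_i)$ is determined by the surgery slope and $i_j^*(e)$, so both $h_{\alpha_i}$ and $\rho(L(a_i,b_i),\alpha_i)$ coincide with their Seifert values, and Proposition \ref{APSindex} reproduces formula (\ref{R(e)}) to give $\Ind^+(E,\alpha)=R(a_1,\ldots,a_n)>0$.

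Suppose for contradiction that $\Sigma$ bounds a positive definite $B$ with $H_1(B;\Z/2)=0$. Form $N=X\cup_\Sigma(-B)$ and extend $(E,\alpha)$ to $(E_N,\alpha)$ by the trivial connection over $-B$; the remark after Proposition \ref{APSindex} preserves the index, $N$ is negative definite, and $H_1(N;\Z/2)=0$. Lemma \ref{mod1mod4} gives $\hat\tau(\partial N,\alpha)\geq\min_i 1/a_i>1/a=p_1(E_N,\alpha)$, so $\calm(E_N,\alpha)$ is compact by Proposition \ref{compactness}. Since $\Sigma$ is a homology sphere, $H^2(N)=\Z\cdot e\oplus H^2(-B)$ splits orthogonally, and Theorem \ref{CofL}(1) forces $|C(e)|=1$, contradicting Theorem \ref{furfinster}. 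The main obstacle is verifying the self-intersection $F\cdot aF=-1$ when the Seifert surface has positive genus; once the coincidence of the Seifert and zero framings is invoked to cancel the interior contribution, every other ingredient (the $\rho$-invariants, the $h_{\alpha_i}$, the compactness bound, and the reducible-count argument) is a direct transcription of the Seifert-fibered case.
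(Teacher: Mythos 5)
Your proposal is correct and follows essentially the same route as the paper, whose own "proof" is just the observation immediately preceding the theorem that the Fintushel--Stern argument never used the knot type of the $0$-framed component; you have simply written out that transcription in full. The one point you rightly flag as needing care --- that $F\cdot aF=-1$ persists for a genus~$>0$ Seifert surface because the $0$-framing agrees with the Seifert framing and the meridians each link $K$ once --- is handled correctly, and the rest (index, compactness, $C(e)$ a single point, contradiction with Theorem \ref{furfinster}) matches the paper's argument.
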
 
 To be explicit, for any 
 relatively prime positive integers $p,q$ and positive  integer $k$, the  hypothesis of Theorem \ref{small} holds with $(a_1, a_2, a_3)=(p,q, pqk-1)$. This is easily checked using the Neumann-Zagier formula.

\bigskip

We next  prove Theorem 1, a  generalization  to Seifert fibered rational homology spheres of the theorem of Furuta stated in the introduction.   The main difficulty not present in the original theorem concerns the count of reducibles: in contrast to the homology sphere case, $C(e)$  may have more than one class, and so the task will be to establish that the order of $C(e)$ is odd, in order to use Theorem \ref{furfinster} to reach a contradiction. Our purpose was not to find the most general theorem, but rather to illustrate the  new complexities that arise  when leaving the comfortable domain of integer homology spheres.  

We mention the related results in the articles of Endo \cite{endo}, B. Yu \cite{yu}, and Mukawa \cite{mukawa}.    Endo considers 2-fold branched covers of a certain infinite family of pretzel knots.  These branched covers are Seifert fibered integer homology spheres, and Endo proves the knots are linearly independent in the smooth knot concordance group by using   Furuta's theorem to rule out the existence of certain $\Z/2$-homology cobordisms which would exist if the knots satisfied a linear relation. Yu  proves that certain Montesinos knots are not slice by considering  their 2-fold covers, which are Seifert fibered rational homology spheres.   Mukawa proves that certain Seifert fibered rational homology spheres  have infinite order in the rational homology cobordism group.  The emphasis in Mukawa's article is on removing the $\Z/2$ constraints on the homology of the bounding 4-manifolds.  
In a forthcoming article, \cite{HK1}, we will use this machinery to establish   that the positive clasped untwisted Whitehead doubles of the $(2, 2^k-1)$ torus knots are linearly independent in the smooth knot concordance group.

\medskip

 We begin with some generalities on Seifert fibered rational homology spheres. 
Any Seifert fibered 3-manifold which fibers over $S^2$ has the Kirby diagram  given   in Figure \ref{fig1} for some choices of  relatively prime pairs of integers $(a_i,b_i)$ with $a_i>0$.  
 Let $a=a_1\cdots a_n$ and $d=a\sum_i\tfrac{b_i}{a_i}$.

As before, let $X$ be the 4-manifold indicated in Figure  \ref{fig1} (together with 3-handles) so that the boundary of $X$ is the union of lens spaces $Q=\sqcup_i  L(a_i,b_i)$    and the Seifert fibered 3-manifold $S=S(0;(a_1,b_1),\cdots, (a_n,b_n))$.     Then $S$ is a  rational homology sphere if and only if   $d $ is non-zero; in fact, the  order of $H_1(S)$ is $|d|$.
Reversing  orientation changes the sign of $d$. Assume, then that $S$ is a rational homology sphere and  orient $S$ so that $d>0$.  

 There is an exact sequence
 \begin{equation}
\label{exseq}0\to H_2(X)\to H_2(X,Q) \xrightarrow{\partial } H_1(Q)\to H_1(X)\to 0.
\end{equation}
From the handle description one computes this to be 
 $$0\to \Z \to \Z \to \oplus_i \Z/a_i \to H_1(X)\to 0.$$

The surface $F$ constructed  from the punctured disk bounded by the 0-framed component and the core of the 0-handle has boundary meeting the $L(a_i,b_i)$ in circles $\mu_i$, and is disjoint from $S$.  The circles $\mu_i$ generate $H_1(L(a_i,b_i))$.  The class $[F,\partial F]$ generates $H_2(X,Q)=\Z$,     and  $\partial ([F,\partial F])=[\partial F]=\mu_1+\cdots+\mu_n\in H_1(Q)$.   In particular, $H_1(X)\cong \big(\oplus_i \Z/a_i\big) /\langle \mu_1+\cdots+\mu_n\rangle$. Our construction requires $H_1(X;\Z/2)=0$; this happens  if  and only if  at most one $a_i$ is even.

As before,  
$$F\cdot aF= - (a_1\cdots a_n)\sum_{i=1}^n\tfrac{b_i}{a_i}=-d.$$
 Taking $E=L\oplus \epsilon$ with $e(L)\in H^2(X)$ Poincar\'e dual to $F$, 
one calculates $\Ind^+(E,\alpha)$ just as in (\ref{R(e)}) and obtains
 \begin{equation}\label{eq3.2}
\Ind^+(E,\alpha)= \tfrac{2d}{a}-3 + n+  \sum_{i=1}^n\tfrac{2}{a_i} 
\sum_{k=1}^{a_i-1}\cot(\tfrac{\pi k}{a_i})\cot(\tfrac{ \pi k b_i}{a_i})  \sin^2(\tfrac {\pi k b_i}{a_i}) 
 =  2n-3 -2\sum_{i=1}^n K_i.
\end{equation}
where $K_i$ are integers satisfying
$0<b_i+K_ia_i<a_i$.  The second  equality follows from  (\ref{NZformula}).

Chern-Simons invariants of flat $SO(3)$ connections on Seifert fibered rational homology spheres can be computed just as for their integer homology sphere counterparts. For our purposes the  
the following result will suffice.

 \begin{lemma} \label{cslemma3.2} Let $S$ be the rational homology sphere $S=S(0;(a_1,b_1),\cdots, (a_n,b_n))$ with  $d=a\sum_i\tfrac{b_i}{a_i}$ odd and positive, where the $a_i>0$ are relatively prime and $a=a_1\cdots a_n$. 
 \begin{enumerate}
\item  If $\gamma$ is a flat  irreducible $SO(3)$ connection on the trivial bundle $E=S\times \R^3$, $\cs(S,\gamma) =\frac{q}{a}$  {\rm mod} $\Z$  for some integer $q$.

\item   If  $\gamma$ is a flat   reducible $SO(3)$ connection on the trivial bundle, then $\cs(S,\gamma)=\frac{4q}{d}$ {\rm mod} $\Z$ for some integer $q$.
 \end{enumerate}
\end{lemma}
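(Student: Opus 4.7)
I would prove the two parts by different methods: for (1) a flat-cobordism argument through the truncated mapping cylinder $X$, and for (2) an extension over a simply connected bounding $4$-manifold together with the torsion of $H^2(S;\Z)$.

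For (1), the key point is that the regular fiber $h$ is central in $\pi_1(S)$. Any irreducible real $3$-dimensional representation is absolutely irreducible (no complex or quaternionic irreducibles of odd real dimension), so Schur's lemma forces the commutant of $\gamma$ to be $\R$, which meets $SO(3)$ only in $\{I\}$; hence $\gamma(h)=I$. Therefore $\gamma$ factors through $\pi_1(S)/\langle h\rangle$, which is the orbifold fundamental group of $S^2(a_1,\dots,a_n)$, and this was identified earlier in this section with $\pi_1(X)$ from the handle description of $X$. Thus $\gamma$ extends to a flat $SO(3)$ connection $\tilde\gamma$ on $X$. Flatness gives $p_1(X,\tilde\gamma)=0$, and reading $X$ as a cobordism from $S$ to $Q=\sqcup_i L(a_i,b_i)$, the standard additivity of $p_1$ modulo $\Z$ yields $\cs(S,\gamma)\equiv \sum_i \cs(L(a_i,b_i),\gamma_i)\pmod{\Z}$, where $\gamma_i=\tilde\gamma|_{L(a_i,b_i)}$. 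The universal-cover argument recalled in Section~\ref{rhoinvts} (pullback to $S^3$ is on a necessarily trivial bundle, and $p_1\in 4\Z$ there) places each $\cs(L(a_i,b_i),\gamma_i)$ in $\tfrac{4}{a_i}\Z\pmod{\Z}$. Because $a_i\mid a$, this puts $\cs(S,\gamma)$ in $\tfrac{4}{a}\Z\subseteq \tfrac{1}{a}\Z$ modulo $\Z$, as claimed.

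For (2), write $\gamma=\beta\oplus\theta$ on the trivial bundle $L\oplus\epsilon$ with $\beta$ a flat $U(1)$ connection on an oriented $2$-plane bundle $L$, and set $c=c_1(L)\in H^2(S;\Z)\cong H_1(S;\Z)$, a torsion class of order dividing $d=|H_1(S)|$. I would choose any simply connected $4$-manifold $W$ with $\partial W=S$ (such $W$ exists because any $3$-manifold bounds and one may surger out $\pi_1$). Then $H^1(W;\Z)=0$ together with the long exact sequence of $(W,S)$ makes $H^2(W;\Z)\to H^2(S;\Z)$ surjective, so $c$ lifts to $\tilde c\in H^2(W;\Z)$. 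Fix a line bundle $\tilde L$ over $W$ with $c_1(\tilde L)=\tilde c$, extending $L$, and pick any $U(1)$ connection $\tilde\beta$ on $\tilde L$ restricting to $\beta$. The Chern-Weil computation underlying Proposition~\ref{lem2.4} gives $p_1(W,\tilde\beta\oplus\theta) = \pm\int_W \tilde c\cup \tilde c$, so by the defining formula for the Chern-Simons invariant modulo $\Z$, $\cs(S,\gamma)\equiv \pm\int_W \tilde c\cup\tilde c\pmod{\Z}$. Since $d\tilde c|_S = dc = 0$, exactness writes $d\tilde c = i^*\xi$ for some $\xi\in H^2(W,S;\Z)$, and the relative-absolute cup pairing then yields
\[
\int_W \tilde c\cup\tilde c \;=\; \tfrac{1}{d}\int_W \xi\cup\tilde c \;\in\; \tfrac{1}{d}\Z.
\]
Hence $\cs(S,\gamma)\in \tfrac{1}{d}\Z\pmod{\Z}$; as $d$ is odd, $4$ is a unit modulo $d$, so $\tfrac{1}{d}\Z=\tfrac{4}{d}\Z$ in $\R/\Z$, giving $\cs(S,\gamma)=\tfrac{4q}{d}\pmod{\Z}$ for some $q\in\Z$.

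The step I expect to require the most care is the denominator bookkeeping in (2): naively one might fear $\int_W \tilde c\cup\tilde c\in \tfrac{1}{d^2}\Z$, and the sharper $\tfrac{1}{d}\Z$ bound requires rewriting only \emph{one} of the two factors as a relative class. A minor wrinkle in (1) is that the flat bundle carrying $\gamma_i$ on $L(a_i,b_i)$ need not be trivial, but the $S^3$-cover argument is unaffected since any $SO(3)$ bundle over $S^3$ is trivial. Sign conventions for $p_1$ versus $e(L)^2$ or $c_1(L)^2$ (compare Proposition~\ref{lem2.4}) do not affect the conclusion, which is stated modulo $\Z$.
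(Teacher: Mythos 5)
Your proof is correct. Part (1) is essentially the paper's argument: establish $\gamma(h)=I$ (the paper does this via the classification of centralizers of non-abelian subgroups of $SO(3)$ not conjugate into $O(2)$; you do it via absolute irreducibility of odd-dimensional real irreducibles and Schur's lemma — both are valid), factor $\gamma$ through $\pi_1(X)$ to get a flat cobordism to $\sqcup_i L(a_i,b_i)$, and invoke the denominator-$a_i$ statement for lens spaces coming from the $S^3$ cover. Part (2), however, takes a genuinely different route. The paper passes to the $d$-fold cyclic cover of $S$: the pullback of the reducible connection has trivial holonomy, and integrating the Chern--Simons form over a fundamental domain gives $d\cdot\cs(S,\gamma)\in 4\Z$ directly, so the factor of $4$ appears intrinsically (from the minimal Pontryagin number of $SO(3)$ bundles on closed $4$-manifolds). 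You instead extend $\beta\oplus\theta$ over a simply connected $4$-manifold bounding $S$ and identify $\cs(S,\gamma)$ mod $\Z$ with $-\tilde c\cdot\tilde c$ for the extended $\tfrac1d\Z$-valued intersection form of Section 2 — in effect with the self-linking of the torsion class $c_1(L)\in H^2(S;\Z)$, in the spirit of Lemma \ref{bock} and Proposition \ref{lem2.4} — and your one-relative-factor bookkeeping correctly lands this in $\tfrac1d\Z$ rather than $\tfrac1{d^2}\Z$. The normalization $\tfrac{4q}{d}$ is then recovered only because $d$ is odd, so that $4$ is a unit mod $d$; the paper's cover argument produces the $4$ without that step. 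Since $d$ odd is a standing hypothesis (needed anyway so that the reducible holonomy lands in $SO(2)$ rather than $O(2)$, which both arguments use to write $\gamma=\beta\oplus\theta$), this costs nothing, and your version has the virtue of making the homological source of the denominator $d$ transparent.
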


\begin{proof}  The fundamental group  $\pi=\pi_1(S)$ has the presentation 
$$\pi=\langle x_1,\cdots, x_n, h\ | \ h\text{ central}, \ x_i^{a_i}h^{b_i}, x_1\cdots x_n\rangle.$$   
Since $d$ is odd, $H^1(S;\Z/2)=0$, and so any representation $\pi_1(S)\to O(2)$ takes values in $SO(2)$.  It follows that an irreducible $SO(3)$ representation necessarily takes $h$ to $1\in SO(3)$, since the centralizer of any non-abelian subgroup of $SO(3)$ not conjugate into  $O(2)$ is trivial. The fundamental group of  $X$ is the quotient of $\pi$ by $h$, and hence any irreducible representation of $\pi$ extends to $\pi_1(X)$.  Thus $X$ provides a flat $SO(3)$ cobordism from $S$ to a disjoint union of lens spaces.

The   Chern-Simons invariants of the lens space  $L(a,b)$ are fractions with denominator $a$ and since Chern-Simons invariants modulo $\Z$ are flat cobordism invariants,  the Chern-Simons invariant of an irreducible representation of $\pi$ is a sum of fractions with denominators $a_1,\cdots, a_n$. Thus every irreducible flat $SO(3)$ connection $\gamma$ on $S$ satisfies $\cs(S,\gamma) =\frac{q}{a}$ for some integer $q$.

A reducible $SO(3)$ representation factors through $SO(2)$, hence has image in $\Z/d\subset SO(2)$.  Integrating over a fundamental domain in the $d$-fold cover and using the fact that the pullback flat connection has trivial holonomy and hence has Chern-Simons invariant  a multiple of 4 shows that $\cs(S, \gamma)=\frac{4q}{d}$ for some integer $q$.\end{proof}

 \medskip
 
   Let $\Theta^3_{\Z/2}$ denote  the group of oriented $\Z/2$ homology spheres modulo $\Z/2$ homology bordism. 
 We  give the following variant of the result of Furuta, which corresponds to  the case of $d=1$. This theorem implies Theorem 1 of the introduction.

\begin{theorem}  \label{SFQHS} Suppose that $ p,q,d $ are pairwise relatively prime positive odd integers.  Let $n_1,n_2,\cdots $ be a strictly increasing sequence of even positive integers satisfying
 $$ \gcd(d,n_k)=1,$$ 
 $$ n_k> dn_i -d(d-1)/pq  \text{ for all } k>i,  $$ 
and
$$n_1>\frac{d}{pq}\max\{ 1+ \frac{d}{pq}, 1+ \frac{1}{p },1+ \frac{1}{q}\}.$$
For example, when $d>1$ one can take $n_k=d^{k+s}-1$ for some fixed $s\gg0$.

Then the rational homology spheres obtained by $-\frac{d}{n_k}$ surgeries   on the left-handed $(p,q)$ torus knot  $k\ge 1$  are linearly independent in the $\Z/2$ homology cobordism group $\Theta^3_{\Z/2}$, and hence generate an   infinitely generated free abelian subgroup.  
 \end{theorem}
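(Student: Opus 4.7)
The proof proceeds by contradiction along the lines of Furuta's argument developed in Section~\ref{FSexample}, with the essential novelty being a parity analysis of the set $C(e)$ of reducibles. Suppose $\sum_{k \in I} m_k [Y_k] = 0$ in $\Theta^3_{\Z/2}$ for some finite $I \subset \Z_{>0}$ with not all $m_k$ zero, and let $k_0 = \min\{k \in I : m_k \neq 0\}$. There then exists a compact oriented 4-manifold $W$ with $H_\ast(W;\Z/2) \cong H_\ast(\mathrm{pt};\Z/2)$ whose boundary realizes this relation, consisting of $|m_k|$ copies of $\mathrm{sgn}(m_k) Y_k$ for each $k \in I$. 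Each $Y_k$ is a Seifert fibered rational homology sphere with three singular fibers of multiplicities $p$, $q$, and $pqn_k - d$, and the construction of Section~\ref{FSexample} produces a negative-definite truncated mapping cylinder $F_k$ with boundary $Y_k \sqcup (-L(p,\cdot)) \sqcup (-L(q,\cdot)) \sqcup (-L(pqn_k - d, \cdot))$ and $H^1(F_k; \Z/2) = 0$. The hypotheses that $p, q, d$ are odd and that each $n_k$ is even ensure that $pqn_k - d$ is odd, so every boundary lens space is a $\Z/2$ homology sphere.

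Form $X$ by attaching to $W$ an oriented Seifert cap ($F_k$ or its negative-definite mirror depending on $\mathrm{sgn}(m_k)$) for each of the $|m_k|$ boundary components $\pm Y_k$ with $k > k_0$, and for $|m_{k_0}| - 1$ of the $\pm Y_{k_0}$ boundary components. The result is a negative-definite 4-manifold (an orthogonal gluing of negative-definite blocks along rational homology 3-spheres, with $W$ contributing trivially to the torsion-free intersection form), with $H^1(X;\Z/2) = 0$ by Mayer-Vietoris, whose boundary consists of a single copy of $\pm Y_{k_0}$ together with a disjoint union of lens spaces $L(p,\cdot)$, $L(q,\cdot)$, and $L(pqn_k - d, \cdot)$ indexed by $k \in I$. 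Choose an $SO(2)$ bundle $L$ on $X_\infty$ whose Euler class $e \in H^2(X;\Z)$ is supported in the residual Seifert cap containing $\pm Y_{k_0}$ and equal there to the Poincar\'e dual of the punctured Seifert disk of Section~\ref{FSexample}. Setting $(E,\alpha) = (L \oplus \epsilon, \beta \oplus \theta)$, the computation at (\ref{eq3.2}) yields $p_1(E,\alpha) = -e \cdot e = d/\bigl(pq(pqn_{k_0}-d)\bigr)$ and identifies $\Ind^+(E,\alpha)$ with the Fintushel-Stern invariant $R(p,q,pqn_{k_0}-d) \geq 0$.

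The non-degeneracy of each $\alpha_i$ is automatic, since every boundary component is a rational homology sphere carrying a reducible flat connection. The key estimate $-e \cdot e < \hat\tau(\partial X, \alpha)$ is where the growth hypotheses on $\{n_k\}$ enter. Using Lemma~\ref{mod1mod4}, explicit Chern-Simons computations on lens spaces via Proposition~\ref{prop2.13} and the Neumann-Zagier identity~(\ref{NZformula}), Lemma~\ref{cslemma3.2}, and the flat-cobordism reduction of Section~\ref{rhoinvts}, the compactness inequality reduces to a system of explicit lower bounds on the Chern-Simons values across $L(p,\cdot)$, $L(q,\cdot)$, $L(pqn_k - d, \cdot)$, and $Y_{k_0}$. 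The numerical condition $n_1 > \tfrac{d}{pq}\max\{1 + \tfrac{d}{pq}, 1+\tfrac{1}{p}, 1+\tfrac{1}{q}\}$ handles the lens spaces, while the spacing $n_k > d n_i - d(d-1)/pq$ for $k > i$ ensures that the flat cobordisms relating Chern-Simons values on $Y_{k_0}$ to those on the lens spaces produce denominators exceeding $-e \cdot e$. Granting these estimates, Theorem~\ref{furfinster} concludes that the number of reducible instantons on $(E,\alpha)$ is even and at least $2$.

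The principal obstacle---the main departure from Furuta's integer-homology-sphere argument---is to show simultaneously that this number is \emph{odd}. Because every boundary component of $X$ is a $\Z/2$ homology sphere, Theorem~\ref{CofL}(3) gives a bijection $\Phi: \calm(E,\alpha)_{red} \to C(e)$, reducing the problem to showing $|C(e)|$ is odd. I would argue this by decomposing $H^2(X;\Z)$ modulo torsion as an orthogonal sum of rank-one contributions from each attached Seifert cap (with $W$ contributing nothing to the torsion-free part) and applying Theorem~\ref{CofL}(1) cap by cap: each cap admits only the classes $\pm e$ restricted to it. The boundary restriction conditions $i_j^\ast(e') = \pm i_j^\ast(e)$ at the residual $Y_{k_0}$, together with the arithmetic imposed by the linking forms of the $L(pqn_k - d, \cdot)$ and the oddness of the Seifert multiplicities, should then force $e' = \pm e$ globally, yielding $|C(e)| = 1$ and the desired odd parity. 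The delicate technical core lies in verifying that the spacing hypotheses on $\{n_k\}$ are exactly what prevent independent sign-flips at different caps---a sign-combinatorics argument driven by both the integer-lattice structure of $H^2(X;\Z)$ and the linking forms on $\partial X$.
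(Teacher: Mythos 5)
Your proposal has the right overall shape (cap off, build a reducible adapted bundle, verify the hypotheses of Theorem \ref{furfinster}, show $|C(e)|$ is odd), but the construction of the capped manifold is inverted relative to what the hypotheses on $(n_k)$ can support, and this breaks the compactness step. You distinguish the \emph{minimal} index $k_0$ and attach Seifert mapping cylinders to every component of index $k>k_0$, so your boundary contains lens spaces $L(pqn_k-d,\cdot)$ for arbitrarily large $k\in I$. The only available lower bound there is $\tau\bigl(L(pqn_k-d,\cdot),\cdot\bigr)\ge 4/(pqn_k-d)$, which tends to $0$ and is eventually smaller than $p_1(E,\alpha)=d/\bigl(pq(pqn_{k_0}-d)\bigr)$; the hypotheses of the theorem give no control over this, so hypothesis (3) of Theorem \ref{furfinster} fails. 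The spacing condition $n_k>dn_i-d(d-1)/pq$ for $k>i$ is engineered for the opposite convention: it says exactly that $d/\bigl(pq(pqn_N-d)\bigr)<1/\bigl(pq(pqn_i-d)\bigr)$ for $i<N$, i.e.\ the Euler class must live on the cap attached to the \emph{maximal} nonzero index $N$, the remaining copies of $S_{n_N}$ must be filled by honest negative definite fillings $Z_{n_N}$ (plumbings, contributing no new boundary), and the smaller-index $S_{n_i}$ are left uncapped with the trivial bundle, contributing $\tau\ge 1/\bigl(pq(pqn_i-d)\bigr)>p_1$. Your plan of leaving one uncapped copy of $\pm Y_{k_0}$ on the boundary is independently fatal when $d>1$: its own irreducible flat connections have Chern--Simons quanta $1/\bigl(pq(pqn_{k_0}-d)\bigr)<d/\bigl(pq(pqn_{k_0}-d)\bigr)=p_1$. (And when $|m_{k_0}|=1$ there is no cap left to carry the Euler class at all.)

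The second, smaller gap is the parity count. Theorem \ref{CofL}(1) cannot be applied ``cap by cap,'' because the Mayer--Vietoris splitting of $H^2$ along the separating rational homology sphere $S$ is orthogonal over $\Q$ but not over $\Z$ when $d>1$; there is no integral orthogonal decomposition $H^2=\spa(e)\oplus W$. The paper instead writes $de'=e_1+e_2$ with $e_1$ supported in $B\cup Z$ and $e_2=(dk+a\ell_2)e$, derives $d^2=\ell_3 a+(dk+a\ell_2)^2$, and uses $a>d^2$ together with reduction modulo $p$, $q$, and $pqn_N-d$ to force $e'-e$ to be $2x$ for $x$ torsion; this gives a bijection of $C(e)$ with the (odd-order) torsion of $H^2(N,\partial N)$, which can certainly have more than one element. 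So the conclusion you need is $|C(e)|$ odd, not $|C(e)|=1$, and it requires the arithmetic argument you deferred rather than a sign-combinatorics reduction to the rank-one case.
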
 
\begin{proof}  Choose integers $r,s$ so that $ps+rq=-1$. Given an even integer $n\ge n_1$ satisfying $(d,n)=1$,
denote by $S_n$ the Seifert fibered rational  homology sphere  
$$S_{n}=S(0; (p,r), (q,s), (pqn -d, n))$$
(the hypotheses imply that $pqn-d>n$).
This manifold is diffeomorphic to $-\frac{d}{n}$ surgery  on the $(p,-q)$ torus knot.  One way to see this is to observe from Figure \ref{fig1} that $S_n$ is obtained from the Seifert fibration of $S^3= S(0;(p,r),(q,s))$   by performing Dehn surgery on a regular fiber, which is the left handed $(p,q)$ torus knot. For details see   \cite{Moser}.

  Let $X_{n}$ denote the corresponding negative definite 4-manifold  described in Figure \ref{fig1}, with boundary $S_{n}\sqcup Q$, where $Q =L(p,r)\sqcup L(q,s) \sqcup L(pqn-d,n)$. 
With the constructions given above, $H_1(S_{n})$ is cyclic of order $d$ and  $H_1(X_{n})=0$.

We argue  by contradiction. Suppose   there exists an oriented  $\Z/2$ homology punctured 4-ball $B$ with boundary 
$$ \partial B= -\sum_{i=1}^{N} a_{n_i} S_{n_i} $$
with $a_{n_N}\ne 0$.  We may assume, by changing orientation, that $a_{n_N}> 0$.

Let $Z_{n_N}$ be a negative definite 4-manifold with boundary $S_{n_N}$. For example, a simply connected $Z_{n_N}$ can be constructed by   gluing negative definite manifolds to $X_{n_N}$ along the lens spaces.  Note that this is possible since the lens space $L(a,b)$,  equipped with either orientation, bounds a negative definite 4-manifold obtained by plumbing disk bundles over $S^2$ according to the continued fraction expansion of $\frac{a}{b}$ (or, for $-L(a,b)$, the expansion of $\frac{a}{a-b}$).
 
Thus, we construct a negative definite 4-manifold $N$ with boundary  $$\partial N= L(p,r)\sqcup  L(q,s)\sqcup  L(pqn_N-d, n_N)  
   \sqcup_{i=1}^{N-1}  a_{n_i} S_{n_i},$$ 
   as an identification space from $ B$, $X_{n_N}$, and $ a_{n_N} -1$ copies of $Z_{n_N}$.  Here, $N$ is obtained by gluing  $ B$ to $X_{n_N}$ along one  $S_{n_N}$, and gluing the copies of $Z_{n_N}$ to $ B$ along the remaining copies of $S_{n_N}$.

For the convenience of the reader we simplify notation and illustrate it in Figure \ref{fig2}.   Let \begin{itemize}
\item $Q= L(p,r)\sqcup  L(q,s)\sqcup  L(pqn_N-d, n_N)$
\item $S$ denote the first copy of $S_{n_N}$,
\item $X=X_{n_N}$,
\item $Z$ denote the disjoint union of $ a_{n_N} -1$ copies of $ Z_{n_N}$,
\item $T= \sqcup_{i=1}^{N-1} a_{n_i} S_{n_i} $.
\end{itemize} Thus $N=X\cup B\cup Z$, the boundary is given by $\partial N=Q\sqcup T$, and $S$ separates $N$ into $X$  and $B\cup Z$.

\begin{figure}
\psfrag{X}{$X$}
\psfrag{B}{$B$}
\psfrag{Q}{$Q$}
\psfrag{S}{$S$}
\psfrag{T}{  {$T$}  }
\psfrag{Z}{$Z$}
\begin{center}
 \includegraphics [height=190pt]{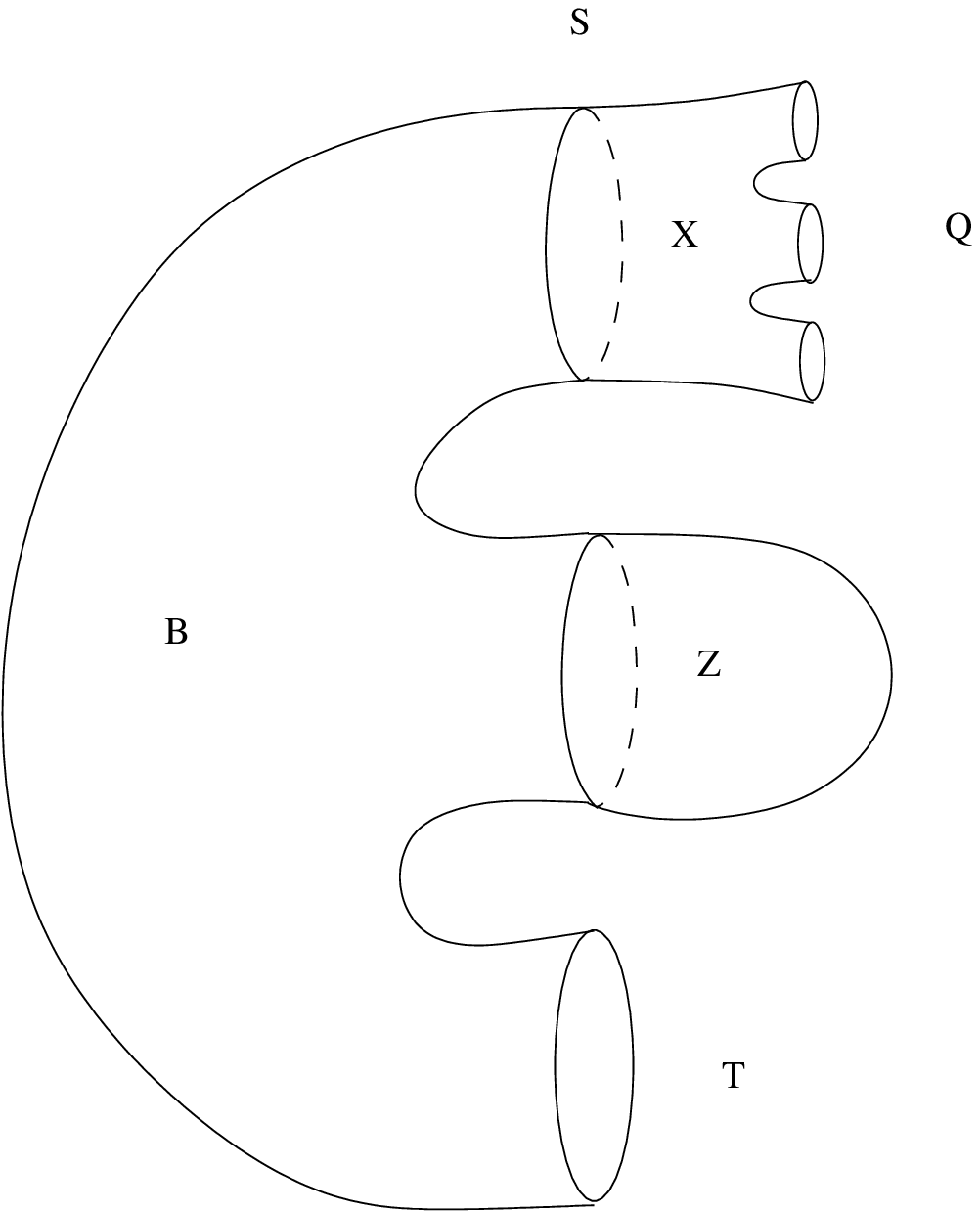}
\caption{\label{fig2} }
\end{center}
\end{figure}

Let $e\in H^2(N, T)$ denote the extension (by zero) of the class in $H^2(X,S)$ which is  Lefschetz dual to the generator $[F,\partial F]\in H_2(X,Q)=\Z$. Let $L\to N$ denote the corresponding $SO(2)$ vector bundle with Euler class $e$, which is trivial over $B\cup Z$. Let $E=L\oplus \epsilon$, and $(E,\alpha)$ the corresponding adapted $SO(3)$ bundle.

  Then 
$$p_1(E,\alpha)=-F\cdot F=  \tfrac{1}{pq(pqn_N-d)}\big(rq(pqn_N-d)+ps(pqn_N-d)+pqn_N\big)= \frac{d}{pq(pqn_N-d)}.$$
Suppose that  $K_1$ and $ K_2$ are integers satisfying $0<r+K_1p<p$ and $0<s+K_2 q<q$.
Then 
$$-1=ps+qr=p(s+K_2q)+q(r+K_1p)-pq(K_1+K_2)\leq p(q-1)+q(p-1)- pq(K_1+K_2)$$
so that 
  $$p+q-1\leq pq(2-(K_1+K_2))$$
 and hence $K_1+K_2\leq 1$.
Similarly 
$$-1\ge p+q-pq(K_1+K_2), $$ 
which implies $K_1+K_2\ge 1$. Thus $K_1+K_2=1$. 

Taking $(a_1,b_1)=(p,r), (a_2,b_2)=(q,s), (a_3,b_3)=(pqn_N-d,n_N)$  and $a=a_1a_2a_3$ in (\ref{eq3.2}) we obtain (since  $0<n_N<pqn_N-d$   we   take $K_3=0$):   
\begin{equation}
\label{eqindexcalc}
\Ind^+(E,\alpha)=
\tfrac{2d}{a} +  \sum_{i=1}^3\tfrac{2}{a_i} 
\sum_{k=1}^{a_i-1}\cot(\tfrac{\pi k}{a_i})\cot(\tfrac{ \pi k b_i}{a_i})  \sin^2(\tfrac {\pi k b_i}{a_i}) =3-2(K_1+K_2)= 1.
\end{equation}

Each component $S_{n_i}$ of $T$ is a Seifert fibered rational homology sphere with singular fibers of multiplicities $p,q$ and $pqn_i-d$.  The bundle $E$ is trivial over $T$.  Lemma \ref{cslemma3.2} implies that each irreducible flat connection  on the trivial $SO(3)$ bundle  over $S_{n_i}$ has Chern-Simons invariant a rational number with denominator 
$ {pq(pqn_i-d)} $  and reducible flat connections have Chern-Simons invariant a rational number with denominator $d$.
The (relative) Chern-Simons invariants of the lens spaces that make up $Q$ are  fractions  with denominators $p,q,$ and $ pqn_N-d  $ respectively. Lemma \ref{mod1mod4} then implies that
$$ \min\{ \tfrac{1}{pq(pqn_i-d)}, \tfrac{1}{d}, \tfrac{1}{p}, \tfrac{1}{q}, \tfrac{1}{pqn_N-d}\}\leq \hat\tau(Q\sqcup T,\alpha).$$

The hypothesis   $n_N\ge n_1>\frac{d}{pq}(1+ \frac{d}{pq})$ implies that  
 $$ \tfrac{d}{pq(pqn_N-d)}<\tfrac{1}{d}.$$ Similarly, $n_N >\frac{d}{pq}(1+ \frac{1}{p})$ implies 
 $$ \tfrac{d}{pq(pqn_N-d)}<\tfrac{1}{q}$$ and $n_N >\frac{d}{pq}(1+ \frac{1}{q})$ implies 
 $$ \tfrac{d}{pq(pqn_N-d)}<\tfrac{1}{p}.$$ Finally, the hypothesis $ n_N> dn_i -d(d-1)/pq $ implies that 
$$ \tfrac{d}{pq(pqn_N-d)}<\tfrac{1}{pq(pqn_i-d)}.$$
Taking the minimum of the right side for $i=1,2,\cdots, N-1$ we conclude
 \begin{equation}\label{eqn3.5}
 \tfrac{d}{pq(pqn_N-d)}=p_1(E,\alpha)< \hat\tau(Q\sqcup T,\alpha).
\end{equation}

 Since the path components of $Q$ are lens spaces, the restriction of  $\alpha$ to each component of $Q$ is non-degenerate. Since $T$ is a union of rational homology spheres and $\alpha$ restricts to the trivial connection on $T$, the restriction of  $\alpha$ to each component of $T$ is non-degenerate. 
With Equations (\ref{eqindexcalc}) and (\ref{eqn3.5}), this shows that the hypotheses of Theorem \ref{furfinster} hold. 
 
We now turn to the count of singular points (i.e. boundary points) of $\calm(E,\alpha)$. Since each  component of $T$ has first homology  $\Z/d$ with $d$  odd and  since $p,q,  pqn_N-d$ are odd, every component of $\partial N$ is a $\Z/2$ homology sphere.  Theorem \ref{CofL}  then implies that 
 $\calm(E,\alpha)$ has $|C(e)|$ boundary components, where
 $$C(e)=\{ e'\in H^2(N) \ | \ e'\cdot e'=-\tfrac{d}{a}, e'\equiv e \text{ mod } 2, e'|_T=0, e'|_{Q_i} =\pm m_i, i=1,2,3\}/\scriptstyle{\pm1}.$$
Here,  $Q_i$ are the three lens space components of $Q$ and $m_i\in H^2(Q_i)$ is the Poincar\'e dual to the meridional generator of $H_1(Q_i)$.

Since $p,q$, and $d$ are pairwise relatively prime,   $H_1(Q)=\Z/p\oplus \Z/q\oplus\Z/(pqn_N-d)=\Z/a$,  generated by $m_1+m_2+m_3$.  Suppose that $e'\in C(e)$.   Choose the unique integer $k$  with $0\leq k<a$ so that the restriction of $e'$ to $H^2(Q)$ equals $k(m_1+m_2+m_3)$.   Since $e'\in C(e)$, $e'$ vanishes on $T$. Note that the sign of $e'$ is ambiguously defined.  It can be uniquely specified by requiring $e'$ to restrict to $m_1$ in $H^2(L(p,r))$ (since $p$ is odd), and so we assume this. This implies that $k\equiv 1$ mod $p$.  Since $e'\in C(e)$, $k\equiv \pm 1$ mod $q$ and $k\equiv \pm 1$ mod $(pqn_N-d)$

The class $de'$ vanishes on $S$, and hence can be expressed  as $de'=e_1+ e_2$ where $e_1\in H^2(B\cup Z, \partial(B\cup Z))$ and $e_2\in H^2(X,S)$.   
Since $B\cup Z$ is negative definite, $e_1\cdot e_1=-\ell_1$ for some non-negative integer $\ell_1$.  The restriction of $e_2$ to $H^2(Q)$ equals $dk(m_1+m_2+m_3)$, and since 
$H^2(X,S)\cong \Z\langle e\rangle$, we have 
 $e_2=(dk+a\ell_2)e \text{ for some } \ell_2\in \Z,
$ and so $e_2\cdot e_2=-(dk+a\ell_2)^2d/a$.  Note $e_1\cdot e_2=0$.
 
 Then 
 $$-\frac{d}{a}=e'\cdot e'=\frac{1}{d^2}(e_1+e_2)\cdot (e_1+e_2)=-\frac{\ell_1}{d^2}-
 \frac{(dk+a\ell_2)^2}{da}.
 $$
We rewrite this as
 $d^3=\ell_1a+(dk+a\ell_2)^2 d.$ 
Since $d$ and $a$ are relatively prime, $\ell_1=d\ell_3$ for some integer $\ell_3\ge 0$, and so
 $$d^2=\ell_3a+(dk+a\ell_2)^2 .$$ 
The fact that $\tfrac{d}{pq(pqn_N-d)}<\tfrac{1}{d}$ implies $a>d^2$, from which it follows that $\ell_3=0$. We therefore have $d=\pm(dk+a\ell_2)$. Reducing this last equation mod $p$ yields $d\equiv \pm d$ mod $p$.  Since $p$ is odd  and $(d,p)=1$, it follows that 
 $d= dk+a\ell_2 .$  Reducing this equation mod $q$  and mod $pqn_N-d$ shows that $k\equiv 1$ mod $q$ and $k\equiv1$ mod $(pqn_N-d) $, and so $k=1$.  This implies that $\ell_2=0$, and that  the restriction of $e'$ to $Q$ {\em equals} the restriction of $e$ to $Q$. 
 
 Hence $e'-e$ vanishes on $\partial N$ and
 $$(e'-e)\cdot (e'-e) =- \tfrac{2d}{a}-2 e'\cdot e=- \tfrac{2d}{a}-\tfrac{2}{d}(de')\cdot e
 =- \tfrac{2d}{a}-\tfrac{2}{d} e_2 \cdot e=- \tfrac{2d}{a}-\tfrac{2}{d}(de)\cdot e=0.$$
Thus $e'-e\in H^2(N,\partial N)$ is a torsion class. Since $H^2(N,\partial N)$ has only odd torsion, it follows that $e'=e+2x$ for a torsion class $x\in H^2(N,\partial N)$.  

Conversely, every class of the form $e'+2x$ for $x\in H^2(N,\partial N)$  torsion is in $C(e)$, and hence
we have established a bijection between $C(e)$ and Torsion($H^2(N,\partial N)$). In particular, $C(e)$ is odd.

This contradicts the conclusion of Theorem \ref{furfinster}, and hence  the putative manifold $B$ cannot exist.
  \end{proof}
 
For an explicit example of Theorem \ref{SFQHS}, consider  $(p,q,d)=(3,5,7)$.  Then   the  homology lens spaces obtained  from $-\tfrac{7}{7^k-1}$ surgery on the $(3,5)$ torus knot for $k=1,2,\cdots\infty$ are linearly independent in $\Theta^3_{\Z/2}$.

\section{Example: non Seifert fibered homology 3-spheres}\label{examples}

Using various techniques of the type discussed in Section \ref{rhoinvts} and topological constructions, one can greatly extend the range of applications of our method.  For example, in \cite{HK1} we   prove that the untwisted Whitehead doubles of the $(2, 2^k-1)$ torus knots  are linearly independent in the smooth concordance group. 
We content ourselves for now with one further application, this time in a context which is homologically identical but geometrically distinct from the case of Seifert fibered homology spheres.   The motivation here is to illustrate some of the new issues which arise when one moves beyond the study of Seifert fibered 3-manifolds.

 \medskip

Suppose that $L=K_1\sqcup\cdots\sqcup K_n\subset S^3$ is a split link and $K_0\subset S^3\setminus L$ an additional component which links each $K_i$ once.  Perform Dehn surgery on $L\sqcup K_0$ with surgery coefficient $0$ on  $K_0$  and $\frac{a_i}{b_i}$ on $K_i$, where $(a_1\cdots a_n)\sum\frac{b_i}{a_i}=1$.  This configuration defines a negative definite 4-manifold $X$, whose boundary is a union of homology lens spaces $ Y_i=-S^3_{ {a_i}/{b_i}}(K_i)$ and a homology sphere $Y_0=\Sigma$ (the construction is homologically the same as the construction in Section \ref{FSexample}).   A Seifert surface for the 0-framed component can be chosen that meets each $K_i$ transversely once.  Let $F$  denote the corresponding punctured Seifert surface together with the core of the 2-handle, pushed slightly into the interior of $X$.  Let $e\in H^2(X;\Z)$ be Poincar\'e dual to the relative homology class represented by $F$.   
 
  The analysis of Section \ref{FSexample}  can be extended to this setting. 
Indeed, the homological data is identical to that of the Fintushel-Stern truncated mapping cylinder, and in the same way determines an adapted bundle $(E,\alpha)=(L_e\oplus \epsilon, \beta\oplus \theta)$. Therefore the part of the data that depends only on the homology, namely $C(e)$, $p_1(E,\alpha)= -e\cdot e$,  and the holonomy of the flat connection $\beta_i$ on $Y_i$, is the same as for the corresponding Seifert fibered example.  

However, some of the invariants that appear in the formula for $\Ind^+(E,\alpha)$ and $\hat{\tau}(Y,\alpha)$ depend on more than the homological data.  Since the $Y_i$ are no longer lens spaces,   to apply Theorem \ref{furfinster} one  one needs to check non-degeneracy of the $\alpha_i$ on $Y_i$.  One also must compute $\rho$ invariants of  the reducible flat connections $\alpha_i$ on $Y_i$.   Finally, one needs to estimate Chern-Simons invariants of flat connections on $\Sigma$ and   $Y_i, i\ge 1$.

The flat connection $\alpha_0$ is trivial on the homology sphere $Y_0$ and hence it is non-degenerate.  However, the flat connections $\alpha_i=\beta_i\oplus \theta$ for $i\ge 1$ need not be  non-degenerate for arbitrary choices of $K_i$ and $\frac{a_i}{b_i}$.     In fact $\alpha_i$ is non-degenerate if and only if $\Delta_{K_i}(\exp(2\pi i b_i/a_i))\ne 0$  \cite{K}, where $\Delta_K(t)$ denotes the Alexander polynomial of $K$.  
 
 \medskip

 We show how this works with one example, illustrated in Figure \ref{fig3}. In this figure $T$ can be  any 3-stranded tangle so that the resulting $K_0$ is connected.  The components $K_1$ and $K_3$ are unknotted and $K_2$ is the figure 8 knot.  By adding two 3-handles one obtains a negative definite 4-manifold $X$ with boundary the union of the homology sphere$Y_0=\Sigma$ given by this surgery diagram and the disjoint union of $Y_1=L(2,1), Y_3=L(11,-2)$ and the homology lens space $Y_2=-S^3_{-3/1}(K_2)=S^3_{3/1}(K_2)$, where the latter equality follows from the fact that $K_2$ is the amphicheiral figure 8 knot.  
 
 \begin{figure}
\psfrag{a}{$ \frac{2}{1}$}
\psfrag{b}{$-\frac{3}{1}$}
\psfrag{c}{$-\frac{11}{2}$}
\psfrag{K}{$T$}
\psfrag{d}{$0$}
\begin{center}
 \includegraphics [height=150pt]{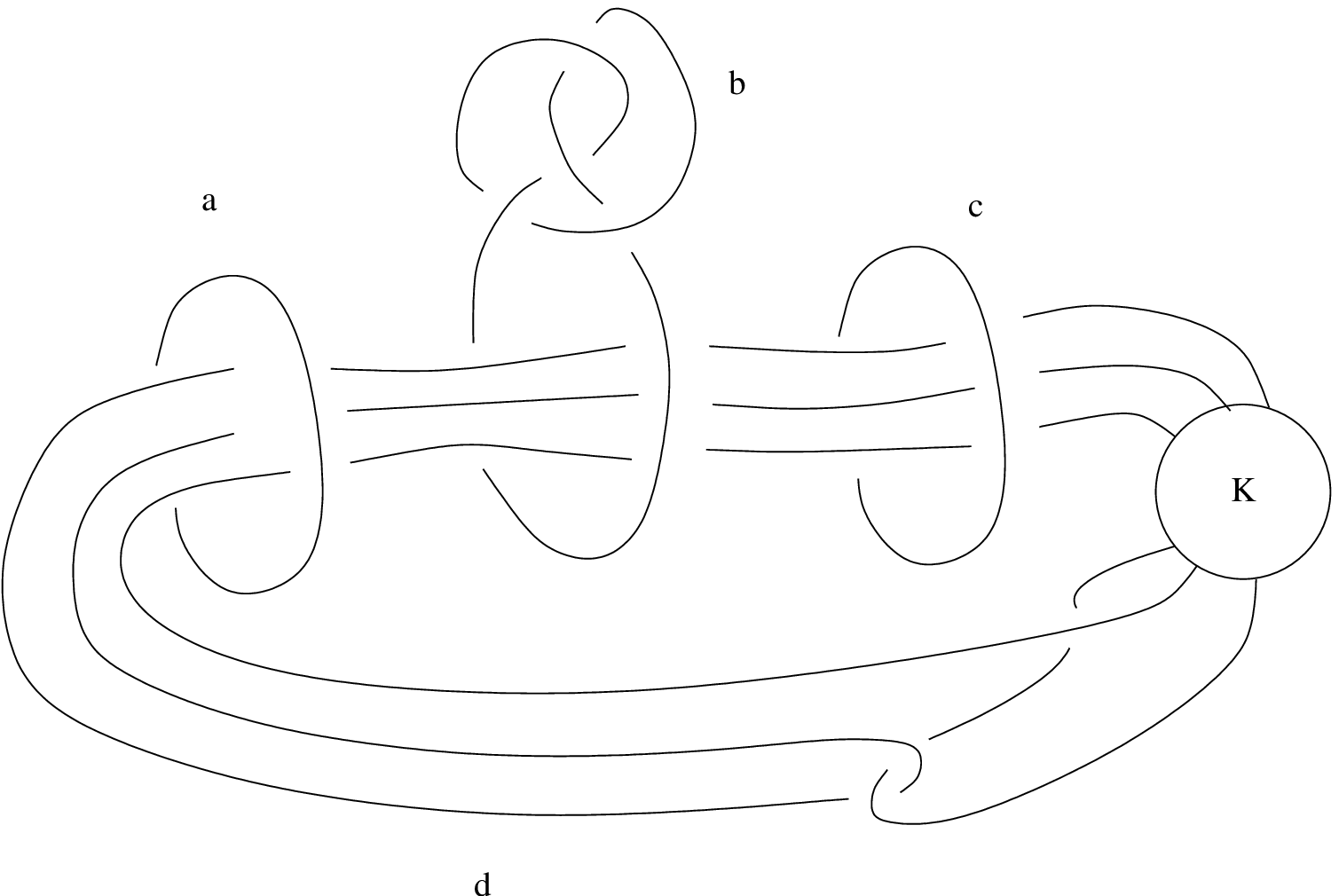}
\caption{\label{fig3} }
\end{center}
\end{figure}

 Since the homological data in this example is the same as that for $\Sigma(2,3,11)$, we see that $C(e)$ consists of a single point, $p_1(E,\alpha)= \frac{1}{66}$, and the holonomy of the flat connection $\beta_i$ takes $\mu_i$ to $\exp(2\pi i (-b_i)/a_i)$ for $i=1,2,3$, and is trivial for $i=0$. 

The Alexander polynomial of the figure 8 knot (resp. unknot) has no roots on the unit circle and so $\alpha_2$ (resp. $\alpha_1,\alpha_3$) is non-degenerate. 
Also,  $h_{\alpha_i}=1$, and so
applying Proposition \ref{APSindex} we obtain
$$\Ind^+(E,\alpha)= \tfrac{2}{66}  -\tfrac{1}{2}\sum_{k=1}^3 \rho(Y_i,\alpha_i).
$$
The $\rho$ invariants of $Y_1=L(2,1)$ (resp. $Y_3=L(11,-1)$) with respect to $\alpha_1$ (resp. $\alpha_3)$ were computed above. We compute  $\rho(Y_2,\alpha_2)$     using the Atiyah-Patodi-Singer theorem, as described in 
Section \ref{rhoinvts}.   

For any knot $K\subset S^3$ there exists  a cobordism (rel boundary)  $Z_K$  from $S^3\setminus n(K)$ to $S^3\setminus n(U)$ (here $U$ is the unknot), together with a homomorphism $H_1(Z_K)\to\Z$ which extends the abelianization map on the knot complement.  One construction of such a $Z_K$ is to push a Seifert surface for $K$ into $D^4$ and perform surgery along circles to turn the Seifert surface into a $2$--disk. The resulting 4-manifold contains a $2$--disk whose complement  is $Z_K$.    For more details on this  construction, we refer the reader to \cite{KKR}. An alternate construction for such a $Z_K$ can be found in \cite{HK1}. 

One can then glue $I\times (S^1\times D^2) $ to $Z_K$ to obtain a cobordism $W_K$ from $-\frac{a}{b}$ surgery on $K$ to $L(a,b)$  over which $\beta$ (and hence $\alpha$) extends.  Since $\alpha=\beta\oplus 1$ which complexifies to $\beta_{2 _\C}\oplus \bar\beta_{2 _\C}\oplus 1$, we conclude that 
$$\rho(Y_2,\alpha_2)=\rho(L(3,1),\alpha_2) +      \Sign_{\beta_{2 _\C}}(W_{K_2}) + 
\Sign_{\bar\beta_{2 _\C}}(W_{K_2})- 2 \Sign(W_{K_2}).$$
The quantity $\Sign_{\beta_{2 _\C}}(W_{K_2}) -\Sign(W_{K_2})$ equals the Levine-Tristram signature (\cite{le,tr}) of the knot  $K_i$ at the $U(1)$-representation $\beta_{2_\C}$ which sends the meridian of $K_i$ to $e^{2\pi i (-b_2)/a_2}$. This can be proven using Wall non-additivity \cite{wall}, and the details are carried out in \cite{KKR}.  Since $K_2$ is the figure 8 knot,  all the Levine-Tristram signatures of $K_2$  are zero. This follows, for instance, from the fact that the Alexander polynomial of the  figure 8 knot has no roots on the unit circle.   Hence $\rho(Y_2,\alpha_2)=\rho(L(3,1),\alpha_2)$ and so   
 \begin{equation}
\label{R(e)fake}
\Ind^+(E,\alpha)= R(2,3,11)=1.
\end{equation}

To apply Theorem \ref{furfinster}  requires an understanding of $\hat{\tau}(Y,\alpha)$.   This is the point where the method requires a deeper analysis of flat $SO(3)$ connections on $Y_i$.  In the Seifert fibered case each $K_i$ is an unknot, and $Y_i$ is a lens space.  Thus all $SO(3)$ representations are abelian, hence reducible, and their Chern-Simons invariants are easy to compute.

For the example of Figure \ref{fig3}, 
\begin{equation}
\label{eqn6.2}
p_1(E,\alpha)=\tfrac{1}{66},\ {\tau}(Y_1,\alpha_1)\ge \tfrac{4}{2} ,\text{ and }{\tau}(Y_3,\alpha_3)\ge \tfrac{4}{11},
\end{equation}
  since $Y_1=L(2,1)$ and $Y_3=L(11,-2)$.  The Chern-Simons is a flat cobordism invariant modulo $\Z$, and so, using the cobordism described above when computing $\rho$ invariants of $Y_2$, we conclude that the Chern-Simons invariants of the  reducible flat connections over $Y_2$ agree modulo $\Z$ with those of $L(3,1)$. Hence  $$\{\cs(Y_2,\alpha_2,\gamma)\}\ge  \tfrac{1}{3},$$where $\{x\}$ denotes the fractional part of of a real number $x$.

However, $Y_2$ admits irreducible flat $SO(3)$ connections, whose Chern-Simons invariants may contribute to $\hat{\tau}$.    A method for computing Chern-Simons invariants of surgeries on the figure 8 knot is described in \cite{KK1}. In particular, \cite{KK1} calculates Chern-Simons invariants of $-3$ surgery on the figure 8 knot. There are two gauge equivalence classes of irreducible $SO(3)$ flat connections 
on $Y_2$, and  both have Chern-Simons invariants rational numbers with denominator $24$.   Hence for $\gamma$ an irreducible flat connection on $Y_2$, $ \cs(Y_2, \alpha_2,\gamma) $ is   a difference of   fractions with denominators $3$ and    $24$. Therefore $$\{\cs(Y_2, \alpha_2,\gamma))\}\ge\tfrac{1}{24} .$$ 
Lemma \ref{mod1mod4} implies  that $\tau(Y_2,\alpha_2)\ge \frac{1}{24}$.  Together with Equation (\ref{eqn6.2}) this shows that   $\hat \tau(Y,\alpha)>p_1(E,\alpha)$.

 Theorem \ref{furfinster} then implies that   $Y_0$ does not bound a positive definite 4-manifold.   Moreover, a simple adaptation of the argument shows that $Y_0$ is of infinite order in $\Theta^3_{\Z/2}$.  For this case, one must obstruct the existence of a punctured $\Z/2$ homology ball whose boundary consists of a disjoint union of manifolds,  each of which are orientation-preserving diffeomorphic to $-Y_0$.  For this, we cap off all of the $-Y_0$ with the negative definite $4$--manifold $X$ from the previous argument.  Consider the adapted bundle $(E,\alpha)$ over the resulting manifold which is the stabilization of the $SO(2)$ bundle whose Euler class is Poincar{\'e} dual to the generator of $H^2(X)$ (for one copy of $X$).    The estimates of the Chern-Simons invariant for flat connections on lens spaces and $Y_2$ given above show that the punctured $\Z/2$-homology ball does not exist, proving that $Y_0$ has infinite order.  Note that to apply Theorem \ref{furfinster} here one must bound the relative Chern-Simons invariants $\cs(Y_i, \theta_i,\gamma))>\frac{1}{66}$, where $\theta_i$ is the trivial connection on the trivial bundle (corresponding to the remaining copies of $X$ on which the $SO(2)$  bundle restricts trivially).  This estimate, however, follows exactly as in the discussion giving the bounds $\cs(Y_i, \alpha_i,\gamma))>\frac{1}{66}$.  
 
\bibliographystyle{amsplain}

\end{document}